\newtheorem{thm}{Theorem}[section]
\newtheorem{prop}[thm]{Proposition}
\newtheorem{prob}[thm]{Problem}
\newtheorem{lem}[thm]{Lemma}
\newtheorem{false statement}{False statement}
\newtheorem{cor}[thm]{Corollary}
\newtheorem{fact}[thm]{Fact}
\theoremstyle{definition}
\newtheorem{defn}[thm]{Definition}
\newtheorem{claim}[thm]{Claim}
\newtheorem{conj}[thm]{Conjecture}
\makeatletter \@addtoreset{equation}{section}
\newcommand{\ex}{{\rm ex}}
\def\hh{\mathcal{H}}
\def\hf{\mathcal{F}}
\def\hg{\mathcal{G}}
\def\ha{\mathcal{A}}
\def\hb{\mathcal{B}}
\def\hd{\mathcal{D}}
\def\ex{\mathbb{E}}
\begin{document}
\title{\bf\Large On $r$-wise $t$-intersecting uniform families}
\date{}
\author{Peter Frankl$^1$, Jian Wang$^2$\\[10pt]
$^{1}$R\'{e}nyi Institute, Budapest, Hungary\\[6pt]
$^{2}$Department of Mathematics\\
Taiyuan University of Technology\\
Taiyuan 030024, P. R. China\\[6pt]
E-mail:  $^1$frankl.peter@renyi.hu, $^2$wangjian01@tyut.edu.cn
}
\maketitle

\begin{abstract}
We consider families, $\hf$ of $k$-subsets of an $n$-set. For integers $r\geq 2$, $t\geq 1$, $\hf$ is called $r$-wise $t$-intersecting if any $r$ of its members have at least $t$ elements in common. The most natural construction of such a family is the full $t$-star, consisting of all $k$-sets containing a fixed $t$-set. In the case $r=2$ the Exact Erd\H{o}s-Ko-Rado Theorem shows that the full $t$-star is largest if $n\geq (t+1)(k-t+1)$. In the present paper, we prove that for $n\geq (2.5t)^{1/(r-1)}(k-t)+k$, the full $t$-star is largest in case of $r\geq 3$. Examples show that the exponent $\frac{1}{r-1}$ is best possible. This represents a considerable improvement on a recent result of Balogh and Linz.
\end{abstract}

\section{Introduction}

Let $[n]=\{1,\ldots,n\}$ be the standard $n$-element set. Let $2^{[n]}$ denote the power set of $[n]$ and let $\binom{[n]}{k}$ denote the collection of all $k$-subsets of $[n]$. A subset $\hf\subset \binom{[n]}{k}$ is called a {\it $k$-uniform family}.

The  central notion of this paper is that of $r$-wise $t$-intersecting.

\begin{defn}
For positive integers $r,t$, $r\geq 2$, a family $\hf\subset 2^{[n]}$ is called $r$-wise $t$-intersecting if $|F_1\cap F_2\cap \ldots \cap F_r|\geq t$ for all $F_1,F_2,\ldots,F_r\in \hf$.
\end{defn}

Let us define
\begin{align*}
&m(n,r,t)= \max\left\{|\hf|\colon \hf\subset 2^{[n]} \mbox{ is $r$-wise $t$-intersecting}\right\},\\[3pt]
&m(n,k,r,t)= \max\left\{|\hf|\colon \hf\subset \binom{[n]}{k} \mbox{ is $r$-wise $t$-intersecting}\right\}.
\end{align*}

Let us define the so-called Frankl families (cf. \cite{F77PHD})
\begin{align*}
&\ha_i(n,r,t) =\{A\subset [n]\colon A\cap [t+r i]\geq t+(r-1)i\}, \ 0\leq i\leq \frac{k-t}{r},\\[3pt]
&\ha_i(n,k,r,t) =\ha_i(n,t) \cap \binom{[n]}{k}.
\end{align*}

Since $\ha_i(n,r,t)$ consists of the sets $A$ satisfying $|[t+ri]\setminus A|\leq i$, that is, sets that leave out at most $i$ elements out of the first $t+ri$, $|A_1\cap \ldots\cap A_r\cap [t+ri]|\geq t+ri-ri\geq t$ for all $A_1,\ldots,A_r\in \ha_i(n,r,t)$.

\begin{conj}[\cite{F77PHD}]
\begin{align}
&m(n,r,t) =\max_i|\ha_i(n,r,t)|;\label{ineq-1.1}\\[3pt]
&m(n,k,r,t) =\max_i |\ha_i(n,k,r,t)|.\label{ineq-1.2}
\end{align}
\end{conj}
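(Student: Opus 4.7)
The conjecture is open in general, so I would aim for the special case highlighted in the abstract: for $r\geq 3$ and $n\geq (2.5t)^{1/(r-1)}(k-t)+k$, establish \eqref{ineq-1.2} with the maximum attained at $i=0$, i.e.\ $m(n,k,r,t)=|\ha_0(n,k,r,t)|=\binom{n-t}{k-t}$ (the full $t$-star). My strategy is the classical two-step one: reduce to shifted families, then perform a layered counting argument based on the families $\ha_i(n,k,r,t)$ themselves.

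First I would apply the shifting operators $S_{ij}$ iteratively; these preserve the $r$-wise $t$-intersecting property and $|\hf|$, so we may assume $\hf$ is shifted. The structural lemma I would try to establish is that every $F\in\hf$ admits an index
\[
i(F):=\min\{i\geq 0\colon |F\cap[t+ri]|\geq t+(r-1)i\},
\]
finite and bounded by $(k-t)/r$; the proof would show that if $i(F)$ did not exist, then shiftedness would allow us to construct $r$ members of $\hf$ whose joint intersection has fewer than $t$ elements, contradicting the $r$-wise $t$-intersecting assumption. This partitions $\hf=\bigsqcup_i\hf_i$ with $\hf_0=\{F\in\hf\colon [t]\subset F\}$, and by minimality one obtains a bound of the form $|\hf_i|\leq\binom{t+ri}{i}\binom{n-t-ri}{k-t-(r-1)i}$ for $i\geq 1$.

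The crux is turning these per-layer bounds into the sharp inequality $|\hf|\leq\binom{n-t}{k-t}$: since $|\hf_0|$ alone can already equal $\binom{n-t}{k-t}$, one cannot afford to add anything positive from the higher layers. I would therefore build an explicit injection
\[
\varphi:\bigsqcup_{i\geq 1}\hf_i\longrightarrow\ha_0(n,k,r,t)\setminus\hf_0,
\]
roughly by sending $F$ to the set obtained by inserting the missing elements of $[t]$ into $F$ and deleting an equal number of large elements; the image lies outside $\hf_0$ by the definition of $i(F)$. The ratio $\binom{n-t-ri}{k-t-(r-1)i}/\binom{n-t}{k-t}$ scales like $((k-t)/n)^{(r-1)i}$, so the $i=1$ layer is the binding constraint; this explains why the hypothesis on $n$ carries the exponent $1/(r-1)$ and, as the abstract notes, why this exponent is tight. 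I expect the main obstacle to be verifying that $\varphi$ is well defined and injective uniformly across all layers, and the constant $2.5t$ presumably arises from optimizing the slack available in this absorption step at $i=1$, with the tail $i\geq 2$ handled by a geometric-series bound made possible by the same slack.
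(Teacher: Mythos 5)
You are right that the displayed statement is the open conjecture from~\cite{F77PHD} and that the provable special case in this paper is Theorem~\ref{thm:main-1}; your opening two steps (shift, then partition $\hf$ by $i(F)$, which is Corollary~\ref{cor-2.2}) also match the paper. Where your plan diverges, and where the gap lies, is in the ``absorption'' step. The paper never builds an injection from $\bigsqcup_{i\geq 1}\hf_i$ into $\ha_0(n,k,r,t)\setminus\hf_0$; producing such a map would be delicate precisely because $\hf_0$ is an arbitrary subfamily of $\ha_0$. The worry you raise --- that $|\hf_0|$ could already be as large as $\binom{n-t}{k-t}$ --- is resolved by a different observation, and this is the key idea you are missing: once you assume $\hf$ is not a $t$-star (the only case that needs an argument), Fact~\ref{fact-3.1} shows $\hf$ is $(r-1)$-wise $(t+1)$-intersecting, so $\hf_0=\hf([t])$ is $(r-1)$-wise intersecting and by \eqref{ineq-1.6} satisfies $|\hf_0|\leq\binom{n-t-1}{k-t-1}<\tfrac12\binom{n-t}{k-t}$. (For $r\geq 5$ the paper sharpens this to $<\tfrac14\binom{n-t}{k-t}$ by noting $\hf([t])$ is also $3$-wise $2$-intersecting and invoking Theorem~\ref{thm:main-2}.) This creates the slack that your injection was supposed to find, and turns the rest into a purely numerical estimate; no injection is needed.

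The second, smaller discrepancy concerns the upper-layer bound. Your per-layer estimate $|\hf_i|\leq\binom{t+ri}{i}\binom{n-t-ri}{k-t-(r-1)i}$ is correct (by minimality $|F\cap[t+ri]|=t+(r-1)i$ exactly), but the paper instead proves, via a reflection-principle count of lattice paths, the aggregate bound $\sum_{i\geq 1}|\hf_i|\leq\sum_{1\leq i\leq t}\binom{t}{i}\binom{n-t}{k-t-(r-1)i}$, i.e.\ inequality \eqref{ineq-key1}. In that form the ratio of consecutive summands is controlled by $\frac{t-i}{i+1}\bigl(\frac{k-t}{n-k}\bigr)^{r-1}\leq\frac{1}{c(i+1)}$ once $n-k\geq(ct)^{1/(r-1)}(k-t)$, which leads cleanly to $\sum_{i\geq 1}|\hf_i|<(e^{1/c}-1)\binom{n-t}{k-t}$; with $c=2.5$ (resp.\ $c=2$) this is $<\tfrac12$ (resp.\ $<\tfrac34$) of $\binom{n-t}{k-t}$, and adding the bound on $|\hf_0|$ finishes. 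Your geometric-series intuition for the higher layers is sound, but without Fact~\ref{fact-3.1} together with \eqref{ineq-1.6} to control $|\hf_0|$ numerically, the injection you propose is a genuine obstruction that your sketch leaves unresolved.
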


Let us note that for $r=2$ the statement \eqref{ineq-1.1} is a consequence of the classical Katona Theorem \cite{K64}.

\begin{thm}[The Katona Theorem \cite{K64}]
\[
m(n,2,t) =|\ha_{\lfloor \frac{n-t}{2} \rfloor}(n,2,t)|.
\]
\end{thm}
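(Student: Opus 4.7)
The plan is to prove this classical Katona bound via the standard two-step strategy: reduction to a well-structured family, then a comparison with the extremal Frankl family $\ha_{i^*}(n,2,t)$, where $i^* := \lfloor(n-t)/2\rfloor$.

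First, I would reduce to a shifted, upward-closed family $\hf$. The shifting operators $s_{ij}$ (for $i<j$) preserve 2-wise $t$-intersection and do not decrease $|\hf|$, so we may assume $\hf$ is shifted. Similarly, if $\hf$ is 2-wise $t$-intersecting, so is its upward closure $\{A\subset[n] : A\supset F\mbox{ for some }F\in \hf\}$: indeed, $A_1\supset F_1$ and $A_2\supset F_2$ force $A_1\cap A_2\supset F_1\cap F_2$, hence $|A_1\cap A_2|\geq t$. Thus we may assume $\hf$ is both shifted and upward closed, which are the structural features already possessed by $\ha_{i^*}$.

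Second, I would construct an injection $\phi:\hf\setminus \ha_{i^*}\hookrightarrow \ha_{i^*}\setminus \hf$, which immediately gives $|\hf|\leq |\ha_{i^*}|$. The natural template is a ``twisted complementation'' $\phi(A)=([n]\setminus A)\triangle S_A$ with a small correction set $S_A\subset[n]$ chosen so that $\phi(A)\in \ha_{i^*}$ and $|A\cap\phi(A)|<t$; the latter condition, together with $A\in \hf$ and the 2-wise $t$-intersection of $\hf$, forces $\phi(A)\notin \hf$. For $t=1$ one may already take $S_A=\emptyset$, since $A$ and $[n]\setminus A$ are disjoint.

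The main obstacle is choosing $S_A$ correctly when $t\geq 2$ and $A$ has ``middle'' size $|A|\in((n-t)/2,(n+t)/2)$, where plain complementation fails to land in $\ha_{i^*}$. An alternative that avoids this case analysis is Katona's cycle method: average over cyclic orderings of $[n]$ and bound, on each cyclic orbit, the number of sets of $\hf$ that form cyclic arcs in terms of the analogous count for $\ha_{i^*}$. I would expect the cycle-method route to be the cleanest path in practice.
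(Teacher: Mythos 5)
The paper does not prove this statement; it is quoted as the classical Katona theorem with a citation to \cite{K64}, so there is no internal proof to compare against. Judged on its own, your proposal is a plan rather than a proof, and the gap sits exactly at the step you flag but do not resolve. Write $i^*=\lfloor (n-t)/2\rfloor$; when $n-t$ is even, $\ha_{i^*}(n,2,t)=\{A\colon |A|\geq (n+t)/2\}$. Every $A\in\hf\setminus\ha_{i^*}$ with $(n-t)/2<|A|<(n+t)/2$ has a complement whose size lies in the same middle band, so plain complementation never lands in $\ha_{i^*}$, and you must add roughly $t$ elements to $[n]\setminus A$ while simultaneously keeping $|A\cap\phi(A)|<t$ (which forces $S_A\subset [n] \setminus A$ of size $<t$, but then $|\phi(A)|<|A^c|+t$ may still fall short of $(n+t)/2$) and preserving injectivity. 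No choice of a single ``correction set'' per set does all three jobs; this middle band is precisely why Katona's actual argument goes through his intersecting shadow theorem (pairing the sets of size $a<(n+t)/2$ with complements of their $t$-fold shadows), or alternatively through the induction $m(n,2,t)\leq m(n-1,2,t-1)+m(n-1,2,t+1)$ together with a parity analysis identifying the optimum among the $\ha_i$. Your proposal contains neither ingredient.

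The fallback you suggest, Katona's cycle method, does not rescue the argument: that method is tailored to \emph{uniform} intersecting families (arcs of a fixed length on a cycle), and there is no clean way to run it over a non-uniform $t$-intersecting family in $2^{[n]}$, where arcs of different lengths must be compared and the count on a single cyclic order does not localize. Finally, since the statement is an equality, a complete write-up would also need the (easy) lower bound, i.e.\ that $\ha_{i^*}(n,2,t)$ is $2$-wise $t$-intersecting, and the verification that $i^*=\lfloor(n-t)/2\rfloor$ maximizes $|\ha_i(n,2,t)|$ over $i$; neither appears in your plan. The first reduction step (shifted, upward-closed) is correct and standard, but everything after it remains to be done.
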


The case $r=2$ of \eqref{ineq-1.2} was a longstanding conjecture. It was proved in \cite{FFu} for a wide range and it was completely established by the celebrated Complete Intersection Theorem of Ahlswede and Khachatrain \cite{AK}.

A family $\hf\subset \binom{[n]}{k}$ is called a {\it $t$-star} if there exists $T\subset [n]$ with $|T|=t$ such that $T\subset F$ for all $F\in \hf$. The family $\{F\in \binom{[n]}{k}\colon T\subset F\}$ with some $T\in \binom{[n]}{t}$ is called a {\it full $t$-star}.

Let us recall a part of it that was proved earlier.

\begin{thm}[Exact Erd\H{o}s-Ko-Rado Theorem \cite{ekr}, \cite{F78}, \cite{W84}]\label{thm-ekr}
Let $\hf\subset \binom{[n]}{k}$ be a 2-wise $t$-intersecting family. Then for $n\geq (t+1)(k-t+1)$,
\[
|\hf| \leq \binom{n-t}{k-t}.
\]
Moreover, for $n>(t+1)(k-t+1)$ equality holds if and only if $\hf$ is the full $t$-star.
\end{thm}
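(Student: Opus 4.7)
The plan is to apply the shifting technique together with induction on $t$. First I would reduce to the shifted case: for $1 \le i < j \le n$, the shift $s_{ij}$ replaces $F \in \hf$ by $(F \setminus \{j\}) \cup \{i\}$ whenever $j \in F$, $i \notin F$, and $(F \setminus \{j\}) \cup \{i\} \notin \hf$. A routine check shows $|s_{ij}(\hf)| = |\hf|$ and that $s_{ij}(\hf)$ remains 2-wise $t$-intersecting, so iterating until stable I may assume $\hf$ is shifted.

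Next I would induct on $t$, with the base case $t = 1$ being the classical Erd\H{o}s-Ko-Rado theorem (where $n \ge 2k$ coincides with $(t+1)(k-t+1)$). For the inductive step, split $\hf = \hf_1 \sqcup \hf_0$ by whether element $1$ is contained and set $\hf_1' = \{F \setminus \{1\} : F \in \hf_1\} \subset \binom{[2,n]}{k-1}$. Then $\hf_1'$ is 2-wise $(t-1)$-intersecting, $\hf_0 \subset \binom{[2,n]}{k}$ is 2-wise $t$-intersecting, and the pair is cross-$t$-intersecting: for $A' \in \hf_1'$ and $B \in \hf_0$, $|A' \cap B| = |(A' \cup \{1\}) \cap B| \ge t$, since $1 \notin B$. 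The inductive hypothesis at parameters $(n-1, k-1, t-1)$ is applicable because $n - 1 \ge t(k-t+1)$ follows from $n \ge (t+1)(k-t+1)$, and gives $|\hf_1'| \le \binom{n-t}{k-t}$ with equality only for a full $(t-1)$-star.

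The main obstacle is to show $\hf_0 = \emptyset$, so that $|\hf| = |\hf_1'| \le \binom{n-t}{k-t}$. If $\hf_0 \ne \emptyset$, fix $B \in \hf_0$; cross-$t$-intersection forces $\hf_1' \subset \{A' \in \binom{[2,n]}{k-1} : |A' \cap B| \ge t\}$, whose cardinality is at most $\sum_{s \ge t}\binom{k}{s}\binom{n-1-k}{k-1-s}$, while shiftedness gives the additional rigidity that for every $j \in B$ the set $(B \setminus \{j\}) \cup \{1\}$ must lie in $\hf$, so $\binom{B}{k-1} \subset \hf_1'$. A careful computation using the threshold $n \ge (t+1)(k-t+1)$ bounds $|\hf_1'| + |\hf_0|$ strictly below $\binom{n-t}{k-t}$ whenever $\hf_0 \ne \emptyset$; this is the delicate step where the precise threshold is used.

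For the uniqueness claim when $n > (t+1)(k-t+1)$, the strict inequality leaves enough slack that equality in $|\hf| = \binom{n-t}{k-t}$ forces $\hf_1'$ to saturate the inductive bound, so by inductive uniqueness $\hf_1'$ is a full $(t-1)$-star; the shifted $\hf$ is then the full $t$-star on $[t]$. Since a direct verification shows that any shift $s_{ij}$ sends a full $t$-star to a full $t$-star (only the center migrates), reversing the shift sequence concludes that the original $\hf$ is a full $t$-star on some $t$-element set.
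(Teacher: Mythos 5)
The key step of your plan---handling the case $\hf_0\neq\emptyset$---is not actually carried out, and the strict inequality you assert there is false. You claim that whenever $\hf_0\neq\emptyset$ a ``careful computation'' using $n\geq (t+1)(k-t+1)$ bounds $|\hf_1'|+|\hf_0|$ strictly below $\binom{n-t}{k-t}$. But the family $\ha_1(n,k,2,t)=\{A\in\binom{[n]}{k}\colon |A\cap[t+2]|\geq t+1\}$ is shifted, $2$-wise $t$-intersecting, has $\hf_0\neq\emptyset$ (it contains sets avoiding the element $1$), and at $n=(t+1)(k-t+1)$ it satisfies $|\ha_1(n,k,2,t)|=\binom{n-t}{k-t}$ --- this is exactly the crossover point in the Ahlswede--Khachatrian theorem where $\ha_0$ and $\ha_1$ have equal size. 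So no computation can yield the strict inequality you need at the threshold, and for $n$ strictly above the threshold the assertion ``non-star implies strictly smaller'' is essentially the full content of the theorem, not a lemma one can wave at. The crude bound $|\hf_1'|\leq\sum_{s\geq t}\binom{k}{s}\binom{n-1-k}{k-1-s}$ obtained from a single $B\in\hf_0$ is far too weak to close the argument anywhere near $n\approx(t+1)(k-t+1)$; this kind of induction on $t$ is well known to lose a constant (or worse) in the threshold.

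Two further points. First, the paper does not prove this theorem; it is quoted with citations to Erd\H{o}s--Ko--Rado, Frankl and Wilson. The known proofs at the exact threshold are genuinely hard: Frankl's 1978 argument is a long combinatorial analysis valid only for $t\geq 15$, Wilson's proof is algebraic (eigenvalues of an association scheme), and the general statement is subsumed by the Ahlswede--Khachatrian complete intersection theorem, whose proof uses generating sets and a pushing--pulling argument. A two-line induction cannot substitute for any of these. Second, your uniqueness transfer back through the shifts is also incomplete: from $S_{ij}(\hf)$ being a full $t$-star one cannot directly conclude that $\hf$ was one (a shift can turn a non-star into a star); the standard fix is to prove the strict inequality for all non-stars first and then invoke size preservation, which again returns you to the unproved main step.
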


Theorem \ref{thm-ekr} motivates the following question that is the central problem of the present paper: determine or estimate $n_0(k,r,t)$, the minimal integer $n_0$ such that for all $n\geq n_0$ and all $r$-wise $t$-intersecting families $\hf\subset \binom{[n]}{k}$, $|\hf|\leq |\ha_0(n,k,r,t)|= \binom{n-t}{k-t}$. Theorem \ref{thm-ekr} shows $n_0(k,2,t)=(t+1)(k-t+1)$.

Since the value $\binom{n-t}{k-t}$ is independent of $r$, it should  be clear that $n_0(k,r,t)$ is a monotone decreasing function of $r$. Thus $n_0(k,r,t)\leq n_0(k,2,t)=(t+1)(k-t+1)$.  For $t=1$ the exact value of $m(n,k,r,t)$ and  thereby $n_0(k,r,t)$ is known (cf. \cite{F76}):
\begin{align}\label{ineq-1.6}
m(n,k,r,1) = \left\{ \begin{array}{ll}
                 \binom{n-1}{k-1}, & \mbox{ if } n\geq \frac{r}{r-1}k\\[5pt]
                  \binom{n}{k}, & \mbox{ if } n< \frac{r}{r-1}k.
                \end{array}\right.
\end{align}

Recently, Balogh and Linz \cite{BL} showed that
\[
n_0(k,r,t)< (t+r-1)(k-t-r+ 3).
\]

The main result of the present paper is

\begin{thm}\label{thm:main-1}
For $r= 3,4$,
\begin{align}\label{ineq-1.3}
n_0(k,r,t) \leq    \left(2.5 t\right)^{\frac{1}{r-1}}(k-t)+k.
\end{align}
For $r\geq 5$,
\begin{align}\label{ineq-1.3}
n_0(k,r,t) \leq  \left(2t\right)^{\frac{1}{r-1}}(k-t)+k.
\end{align}
\end{thm}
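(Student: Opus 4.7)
The plan is to apply the standard shifting reduction, then to split by the dichotomy ``is $\hf$ already inside the full $t$-star?'' and to finish with a counting/injection argument that exploits the full $r$-wise $t$-intersecting condition.

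First I would apply the shift $s_{ij}$ for all pairs $i<j$ iteratively; this well-known operation preserves both $|\hf|$ and the $r$-wise $t$-intersecting property, so it suffices to prove the bound for shifted $\hf$. Next comes the dichotomy. If $[t]\subset F$ for every $F\in\hf$, then $\hf$ is contained in the full $t$-star with kernel $[t]$ and the bound $|\hf|\le\binom{n-t}{k-t}$ follows at once. Otherwise, the shiftedness forces $F_0:=\{1,\ldots,t-1,t+1,\ldots,k+1\}\in\hf$, and one further shift $s_{t,k+1}(F_0)=[k]$ is in $\hf$ as well, providing two concrete sets whose constraints on the rest of $\hf$ we will use.

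In this non-star case, write $\hf=\hu\sqcup\hg$ with $\hu=\{F\in\hf:[t]\subset F\}$ and $\hg=\hf\setminus\hu$. The target inequality $|\hf|\le\binom{n-t}{k-t}$ is equivalent to injecting $\hg$ into $\{G\in\binom{[n]}{k}:[t]\subset G\}\setminus\hu$. The key observation is that for many $k$-sets $G$ containing $[t]$, the $r$-wise $t$-intersecting condition $|G\cap F\cap F_2\cap\cdots\cap F_{r-1}|\ge t$ applied with $F\in\hg$ and a suitable $(r-2)$-tuple of shifted sets drawn from $\hf$ (built iteratively from $F_0$ and $[k]$) forbids $G$ from lying in $\hf$. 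Enumerating these forbidden $G$'s and matching them against $\hg$ via the shifted structure supplies the desired injection, provided $n\ge(2.5t)^{1/(r-1)}(k-t)+k$ (resp.\ $(2t)^{1/(r-1)}(k-t)+k$ for $r\ge5$).

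The main obstacle is to extract the sharp exponent $1/(r-1)$ and to control the leading constants $2.5$ and $2$. The exponent is dictated by the competing family $\ha_1(n,k,r,t)$: comparing $|\ha_0|=\binom{n-t}{k-t}$ with $|\ha_1|$ via a ratio of binomials shows that the two cross at $n-k\asymp t^{1/(r-1)}(k-t)$, so the counting argument must be tight up to the constant in front of $t^{1/(r-1)}$. The slight loss $2.5$ versus $2$ for $r=3,4$ should reflect the fact that, with only few distinct sets available to saturate the $r$-wise condition, one cannot enumerate the forbidden $G$'s quite as efficiently as for larger $r$, and tracking this small-$r$ correction is where the bulk of the technical work is expected to go.
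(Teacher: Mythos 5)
Your setup (shifting, then the dichotomy ``$t$-star or not'') matches the paper's, but the entire quantitative core of the theorem is missing from your proposal. You assert that the $r$-wise condition ``forbids'' enough $k$-sets $G\supset[t]$ from lying in $\hf$ that $\hg=\{F\in\hf:[t]\not\subset F\}$ can be injected into the forbidden part of the star, ``provided $n\ge(2.5t)^{1/(r-1)}(k-t)+k$.'' But no such injection is constructed, no enumeration of forbidden sets is given, and nothing in the proposal explains how the threshold $(ct)^{1/(r-1)}(k-t)$ would emerge from it; the sentence is a restatement of the goal, not an argument. The paper closes this gap with two concrete estimates. First, for a shifted $r$-wise $t$-intersecting family every $F$ satisfies $|F\cap[t+ri]|\ge t+(r-1)i$ for some $i$ (Corollary \ref{cor-2.2}), so the members with $[t]\not\subset F$ correspond to lattice paths avoiding $(0,t)$ but still hitting $y=(r-1)x+t$; the reflection principle bounds their number by $\sum_{1\le i\le t}\binom{t}{i}\binom{n-t}{k-t-(r-1)i}$, and the ratio of consecutive terms is controlled by $\frac{t}{i+1}\bigl(\frac{k-t}{n-k}\bigr)^{r-1}\le\frac{1}{c(i+1)}$ once $n-k\ge(ct)^{1/(r-1)}(k-t)$ — this is precisely where the exponent $\frac{1}{r-1}$ enters, and it yields the bound $(e^{1/c}-1)\binom{n-t}{k-t}$.

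Second — and this is a step your plan has no substitute for — bounding the star part by $\binom{n-t}{k-t}$ is not enough, since the non-star part is strictly positive. The paper uses Fact \ref{fact-3.1} (a non-star $r$-wise $t$-intersecting family is $(r-1)$-wise $(t+1)$-intersecting) together with \eqref{ineq-1.6} to get $|\hf([t])|\le\binom{n-t-1}{k-t-1}<\frac12\binom{n-t}{k-t}$, and for $r\ge5$ invokes Theorem \ref{thm:main-2} to improve this to $\frac14\binom{n-t}{k-t}$; the constants $c=2.5$ and $c=2$ are then exactly what make $\frac12+(e^{1/2.5}-1)<1$ and $\frac14+(e^{1/2}-1)<1$. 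Your heuristic for why $2.5$ degrades to $2$ (``fewer sets to saturate the $r$-wise condition'') is also not what happens: the improvement for large $r$ comes from the sharper bound on the star part, not from the counting of the non-star part. As written, the proposal is a plausible outline of the first paragraph of the proof followed by a placeholder where the proof should be.
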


Let us show that \eqref{ineq-1.3} is essentially  best possible for $t\geq  2^r-r$ and $r$ sufficiently large. Precisely,  for $t\geq 2^r-r$ we have
\[
\left(\frac{t+r}{2}\right)^{\frac{1}{r-1}}(k-t)<n_0(k,r,t) \leq  \left(2t\right)^{\frac{1}{r-1}}(k-t)+k.
\]
Let us prove the lower bound by showing that $|\ha_1(n,k,r,t)|>\binom{n-t}{k-t}$ for $n=\left(\frac{t+r}{2}\right)^{\frac{1}{r-1}}(k-t-r+2)+t+r-2$.
Note that
\[
|\ha_1(n,k,r,t)| = \binom{n-t-r}{k-t-r}+(t+r) \binom{n-t-r}{k-t-r+1}=  \binom{n-t-r}{k-t-r}\left(1+\frac{(t+r)(n-k)}{k-t-r+1}\right)
\]
and
\begin{align*}
\frac{|\ha_1(n,k,r,t)|}{\binom{n-t}{k-t}} &= \frac{(k-t)(k-t-1)\ldots(k-t-r+1)}{(n-t)(n-t-1)\ldots(n-t-r+1)}\left(1+\frac{(t+r)(n-k)}{k-t-r+1}\right)\\[3pt]
&= \frac{(k-t)(k-t-1)\ldots(k-t-r+2)}{(n-t)(n-t-1)\ldots(n-t-r+2)} \frac{(t+r)n-(k+1)(t+r-1)}{n-t-r+1}\\[3pt]
&> \left(\frac{k-t-r+2}{n-t-r+2}\right)^{r-1} \frac{(t+r)n-(k+1)(t+r-1)}{n-t-r+1}.
\end{align*}
If $t\geq 2^r-r$ then $n= \left(\frac{t+r}{2}\right)^{\frac{1}{r-1}}(k-t-r+2)+t+r-2\geq 2k-t-r+2$.
Let us assume $k\geq t+r$  (this is no real restriction, cf. Proposition \ref{prop-1.7} below). It follows that
\begin{align*}
\frac{(t+r)n-(k+1)(t+r-1)}{n-t-r+1} \geq (t+r) \frac{n-k-1+\frac{k+1}{t+r}}{n-t-r+1}> \frac{(t+r)(n-k)}{n-t-r+1}>\frac{t+r}{2}.
\end{align*}
Thus,
\begin{align*}
\frac{|\ha_1(n,k,r,t)|}{\binom{n-t}{k-t}} > \left(\frac{k-t-r+2}{n-t-r+2}\right)^{r-1} \frac{t+r}{2}= 1.
\end{align*}
Therefore for $t\geq 2^r-r$ we obtain that
\begin{align*}
n_0(k,r,t)&> \left(\frac{t+r}{2}\right)^{\frac{1}{r-1}}(k-t-r+2)+t+r-2 \\[3pt]
&> \left(\frac{t+r}{2}\right)^{\frac{1}{r-1}}(k-t)+ \left(\frac{t+r}{2}\right)^{\frac{1}{r-1}}\left(2\left(\frac{t+r}{2}\right)^{\frac{r-2}{r-1}}-r\right)\\[3pt]
&> \left(\frac{t+r}{2}\right)^{\frac{1}{r-1}}(k-t)+ \left(\frac{t+r}{2}\right)^{\frac{1}{r-1}}\left(2^{r-1}-r\right)\\[3pt]
&> \left(\frac{t+r}{2}\right)^{\frac{1}{r-1}}(k-t).
\end{align*}

Our next result determines $m(n,k,3,2)$ for $n> 2k\geq 4$.

\begin{thm}\label{thm:main-2}
For $n> 2k\geq 4$,
\begin{align}\label{ineq-thm-2}
m(n,k,3,2) =\binom{n-2}{k-2}.
\end{align}
Moreover, in case of equality $\hf$ is the full 2-star.
\end{thm}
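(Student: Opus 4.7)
The plan is a dichotomy on $t^\ast:=\min\{|F_1\cap F_2|:F_1,F_2\in\hf\}$. Since any $3$-wise $2$-intersecting family is automatically $2$-wise $2$-intersecting (take $F_3=F_1$ in the definition), $t^\ast\geq 2$.

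If $t^\ast=2$, pick $F_1,F_2\in\hf$ with $|F_1\cap F_2|=2$ and set $T:=F_1\cap F_2$. For any $F\in\hf$ the $3$-wise $2$-intersecting hypothesis applied to the triple $F_1,F_2,F$ gives $|F\cap T|\geq 2$, forcing $T\subset F$ since $|T|=2$. Thus $\hf\subset\{F\in\binom{[n]}{k}:T\subset F\}$, so $|\hf|\leq\binom{n-2}{k-2}$ with equality iff $\hf$ is this full $2$-star.

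If $t^\ast\geq 3$, then $\hf$ is $2$-wise $3$-intersecting, so by the Ahlswede-Khachatrian Complete Intersection Theorem, $|\hf|\leq\max_{0\leq i\leq(k-3)/2}|\ha_i(n,k,2,3)|$. The crux is then to prove
\[
|\ha_i(n,k,2,3)|<\binom{n-2}{k-2}\qquad\text{for all }0\leq i\leq\tfrac{k-3}{2}\text{ whenever }n>2k.
\]
Once this strict inequality is established, the previous case yields the theorem, with the equality case being precisely the full $2$-star.

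The main obstacle is this last strict inequality. Using Vandermonde one can expand both sides as sums over a common base $\binom{n-3-2i}{w}$; the side $\binom{n-2}{k-2}$ retains ``tail'' terms for $w\in\{k-2-i,\ldots,k-2\}$ that are absent from $|\ha_i(n,k,2,3)|$, and the hypothesis $n>2k$ keeps every relevant $w$ strictly below the mode of $\binom{n-3-2i}{\cdot}$, so the binomial factors are strictly increasing in $w$ across the comparison range. I would dispose of $i=0$ (where the inequality collapses to $\binom{n-3}{k-3}<\binom{n-2}{k-2}$) and $i=1$ (where it reduces to $2\binom{n-5}{k-4}<\binom{n-5}{k-2}+3\binom{n-5}{k-3}$) by a direct one-line calculation, and handle $i\geq 2$ uniformly by reducing to the boundary case $n=2k+1$ through the Ahlswede-Khachatrian description of the extremal index $i^\ast$, which collapses the verification to a short finite computation.
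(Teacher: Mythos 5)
Your Case 1 is correct, and it is essentially the paper's own first reduction: if two members meet in exactly two elements, every member must contain that pair, so a family that is not a $2$-star is automatically $2$-wise $3$-intersecting (this is Fact \ref{fact-3.1}). The fatal problem is Case 2. There you discard the $3$-wise hypothesis and bound $|\hf|$ by the maximum size of a $2$-wise $3$-intersecting family, i.e.\ by $\max_i|\ha_i(n,k,2,3)|$ via Ahlswede--Khachatrian. The ``crux'' inequality $\max_i|\ha_i(n,k,2,3)|<\binom{n-2}{k-2}$ for all $n>2k$ is simply \emph{false}, so no Vandermonde rearrangement can establish it. Take $n=2k+1$ and $i=\lfloor(k-3)/2\rfloor$, so that $m:=3+2i\in\{k-1,k\}$. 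Then $\ha_i(2k+1,k,2,3)$ consists of all $A\in\binom{[2k+1]}{k}$ with $|A\cap[m]|\geq(m+3)/2$, i.e.\ with $|A\cap[m]|$ exceeding its hypergeometric mean $km/(2k+1)$ by a bounded amount (about $7/4$). Since the standard deviation of $|A\cap[m]|$ grows like $\sqrt{k}$, the proportion of such $A$ tends to $\tfrac12$, whence
\[
\bigl|\ha_{\lfloor(k-3)/2\rfloor}(2k+1,k,2,3)\bigr|=\Bigl(\tfrac12+o(1)\Bigr)\binom{2k+1}{k}=(2+o(1))\binom{2k-1}{k-2}=(2+o(1))\binom{n-2}{k-2},
\]
using $\binom{2k+1}{k}\big/\binom{2k-1}{k-2}=\tfrac{2(2k+1)}{k-1}\to4$. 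So for large $k$ the Ahlswede--Khachatrian maximum is roughly \emph{twice} the bound you need. Your explicit checks for $i=0,1$ are fine, but for $n$ near $2k$ the extremal index is $i\approx 0.4k$, not $O(1)$, and the reduction ``to the boundary case $n=2k+1$'' lands exactly where the inequality fails.

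The underlying issue is that the largest $2$-wise $3$-intersecting families are very far from being $3$-wise $2$-intersecting, so relaxing to the pairwise condition is too lossy near $n=2k$; that relaxation only succeeds for $n$ of order $4(k-2)$, which is precisely the Balogh--Linz range that Theorem \ref{thm:main-2} is meant to improve. The paper instead retains the triple-wise structure throughout: after shifting and reducing to the case $\hf\not\subset\ha_1(n,k,3,2)$, it partitions $\hf$ according to the trace $F\cap[5]$ and bounds each part separately, using the lattice-path/random-walk estimates of Propositions \ref{prop-key} and \ref{prop-key3}, the shadow inequality of Theorem \ref{thm-F91} (via Corollary \ref{cor-4.1}), the $r$-wise $1$-intersecting bound \eqref{ineq-1.6}, and the cross-intersecting bound \eqref{ineq-1.7}. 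Some such use of the $3$-wise hypothesis on the non-star part is unavoidable; your Case 2 needs to be replaced entirely.
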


 Let us note that Balogh and Linz \cite{BL} proved this for $n\geq 4(k-2)$ and in the much older paper \cite{FT}
 the weaker result $m(n,k,3,2) =(1+o(1)){n-2 \choose k-2}$ was established for $k<0.501 n$.

 Let us give two more numerical examples.

\begin{prop}\label{prop-main1}
For $n\geq 2k$,
\[
m(n,k,4,3) =\binom{n-3}{k-3}\mbox{ and } m(n,k,4,4) =\binom{n-4}{k-4}.
\]
\end{prop}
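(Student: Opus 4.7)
The plan is to adapt the approach behind Theorem \ref{thm:main-1} to the small cases $(r,t)=(4,3)$ and $(4,4)$, sharpening the numerical estimates so as to push the threshold on $n$ down to $2k$. We outline the argument for $m(n,k,4,3)=\binom{n-3}{k-3}$; the identity $m(n,k,4,4)=\binom{n-4}{k-4}$ follows from the same scheme with $[4]$ in place of $[3]$.

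Since the shift $S_{ij}$ preserves uniformity and the $r$-wise $t$-intersecting property, we may assume that $\hf\subset\binom{[n]}{k}$ is shifted. Set $s(\hf):=\max\{i\geq 0\colon [i]\subset F \mbox{ for all }F\in\hf\}$. If $s(\hf)\geq 3$ then every $F\in\hf$ contains $[3]$, and the bound $|\hf|\leq\binom{n-3}{k-3}$ is immediate; we may therefore concentrate on the case $s(\hf)\leq 2$.

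The heart of the argument is the following structural step: a shifted, 4-wise 3-intersecting family $\hf$ with $s(\hf)\leq 2$ must satisfy $\hf\subset\ha_i(n,k,4,3)$ for some $i\geq 1$. To see this, pick $F_0\in\hf$ with $s(\hf)+1\notin F_0$; by the shifted property one may take $F_0$ in the canonical form $[s(\hf)]\cup\{s(\hf)+2,\ldots,k+1\}$. If some $F\in\hf$ were to violate $\ha_i$-membership for the appropriate $i=i(s(\hf))$, then by explicit shifts of $F$ and $F_0$ one produces three companion sets $F_1,F_2,F_3\in\hf$ with $|F\cap F_1\cap F_2\cap F_3|\leq 2$, contradicting the 4-wise 3-intersecting hypothesis.

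It then remains to show $|\ha_i(n,k,4,3)|<\binom{n-3}{k-3}$ for every $i\geq 1$ when $n\geq 2k$. This is a direct binomial computation parallel to the lower-bound discussion following Theorem \ref{thm:main-1}: one has
\[
|\ha_i(n,k,4,3)|=\sum_{j=0}^{i}\binom{3+4i}{j}\binom{n-3-4i}{k-3-4i+j},
\]
and the hypothesis $n\geq 2k$ makes the right-hand side strictly smaller than $\binom{n-3}{k-3}$ for every $i\geq 1$. The analogous scheme with $t=4$ in place of $t=3$ yields $m(n,k,4,4)=\binom{n-4}{k-4}$.

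The principal obstacle lies in the structural step, particularly in the case when $s(\hf)$ is close to the target ($s(\hf)=2$ for $t=3$, and $s(\hf)=3$ for $t=4$): there $\hf$ is already very close to a full $t$-star, and the companion sets $F_1,F_2,F_3$ must be chosen delicately so that the four-fold intersection falls strictly below $t$. It is precisely at this point that the sharp threshold $n\geq 2k$ is used, guaranteeing enough room in $[n]\setminus F_0$ to produce the desired companions.
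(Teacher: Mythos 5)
Your argument hinges on the structural claim that a shifted $4$-wise $3$-intersecting family $\hf$ with $s(\hf)\leq 2$ must be contained in a \emph{single} Frankl family $\ha_i(n,k,4,3)$ with $i\geq 1$. This claim is false. Consider, for $k\geq 7$, the shifted family
$\hb=\bigl\{B\in\binom{[n]}{k}\colon [5]\subset B,\ B\cap[6,k+1]\neq\emptyset\bigr\}\cup\bigl\{[k+1]\setminus\{j\}\colon 1\leq j\leq 5\bigr\}$
(the $\hb(n,k,4,3)$ from the concluding remarks of the paper). Any four of its members meet in at least $3$ elements of $[k+1]$, so $\hb$ is $4$-wise $3$-intersecting; it contains $[k+1]\setminus\{1\}$, so $s(\hb)=0$; yet it also contains a set $B_0=[5]\cup\{8\}\cup\{k+2,\dots\}$ with $|B_0\cap[7]|=5<6$ and $|B_0\cap[3+4i]|=6<3+3i$ for all $i\geq 2$, so $\hb\not\subset\ha_i(n,k,4,3)$ for any $i\geq 1$. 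What \emph{is} true (Corollary \ref{cor-2.2}) is only that each individual $F\in\hf$ lies in some $\ha_{i(F)}$ with $i(F)$ depending on $F$; summing the corresponding lattice-path counts over $i$ gives precisely \eqref{ineq-key0}, and at $n=2k$ that bound strictly exceeds $\binom{n-3}{k-3}$, so it cannot close the argument. You yourself flag the companion-set construction as the "principal obstacle" and do not supply it; since the statement it is meant to prove is false, no such construction exists, and the proposal has a fatal gap at its core. (The terminal binomial computation $|\ha_i(n,k,4,3)|<\binom{n-3}{k-3}$ for $i\geq 1$, $n\geq 2k$ is fine but moot.)

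For comparison, the paper first settles the boundary case $n=2k$ (Lemmas \ref{lem-6.5} and \ref{lem-6.6}) by splitting according to how strongly $\hf$ is $3$-wise intersecting: if $\hf$ is $3$-wise $5$-intersecting (resp.\ $7$- or $8$-intersecting for $t=4$), the crude random-walk bound of Proposition \ref{prop-key3} already beats $\binom{n-t}{k-t}$; otherwise there exist $F_1,F_2,F_3$ with small triple intersection, which forces $|F\cap[t+1]|\geq t$ (or $|F\cap[t+3]|\geq t$) for all $F$, and the resulting pieces are controlled via Lemma \ref{lem-6.4} (for saturated shifted families the links $\hf([t+1]\setminus\{i\},[t+1])$ coincide, upgrading cross-intersection to genuine $r$-wise intersection), \eqref{ineq-1.6}, and again Proposition \ref{prop-key3}. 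The general case $n\geq 2k$ is then deduced from $n=2k$ by Lemma \ref{lem-6.2} (a Kruskal--Katona/up-shadow argument) for even $n$ and the recursion \eqref{ineq-2.1} for odd $n$. Your proposal contains no substitute for either ingredient: neither the structure needed at $n=2k$, nor a mechanism for passing from $n=2k$ to larger $n$ beyond the (irrelevant) monotonicity of $|\ha_i|$.
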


The next result establishes the analogue of \eqref{ineq-thm-2} for a wide range of the pair $(r,t)$.

\begin{thm}\label{thm:main-3}
Let $n\geq \max\left\{2k, \frac{t(t-1)}{2\log 2} +2t-1\right\}$ and $t\leq 2^{r-2}\log 2-2$.  Then
\begin{align}\label{ineq-5.2}
m(n,k,r,t) =\binom{n-t}{k-t}.
\end{align}
Moreover, in case of equality $\hf$ is the full $t$-star.
\end{thm}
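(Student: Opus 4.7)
The plan is to combine a ``cascade'' lemma that strengthens the intersecting property whenever the family fails to be a star, with the Ahlswede--Khachatrian theorem (the $r=2$ case of \eqref{ineq-1.2}) in an endgame analysis. The cascade lemma I would establish first states: if $\hg\subset\binom{[n]}{k}$ is $s$-wise $u$-intersecting but is not a $u$-star, then $\hg$ is $(s-1)$-wise $(u+1)$-intersecting. Its proof is immediate: were some $G_2,\dots,G_s\in\hg$ to satisfy $|G_2\cap\dots\cap G_s|\le u$, then the $s$-wise condition applied with any $G_1\in\hg$ would force $|G_2\cap\dots\cap G_s|=u$ and $G_2\cap\dots\cap G_s\subseteq G_1$, so $\hg$ would be a $u$-star---contradiction.

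I would then iterate the cascade on $\hf$. If $\hf$ is already a $t$-star, we conclude $|\hf|\le\binom{n-t}{k-t}$ with equality iff $\hf$ is the full $t$-star. Otherwise $\hf$ is $(r-1)$-wise $(t+1)$-intersecting; if it happens to be a $(t+1)$-star then $|\hf|\le\binom{n-t-1}{k-t-1}<\binom{n-t}{k-t}$ (strict since $n>k$), and we are done. Continuing up through $r-2$ iterations, either $\hf$ becomes a $(t+i)$-star for some $1\le i\le r-2$ (giving $|\hf|\le\binom{n-t-i}{k-t-i}<\binom{n-t}{k-t}$), or $\hf$ survives as a $2$-wise $(t+r-2)$-intersecting family. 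In the latter case Ahlswede--Khachatrian yields
\[
|\hf|\le\max_{j\ge 0}|\ha_j(n,k,2,t+r-2)|,
\]
so the task reduces to showing this maximum is strictly less than $\binom{n-t}{k-t}$.

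For $j=0$ the comparison is direct: $|\ha_0(n,k,2,t+r-2)|=\binom{n-t-r+2}{k-t-r+2}=\binom{n-t}{k-t}\prod_{i=0}^{r-3}(k-t-i)/(n-t-i)\le 2^{-(r-2)}\binom{n-t}{k-t}$ using $n\ge 2k$. For $j\ge 1$ one must bound $\sum_{l=0}^j\binom{t+r-2+2j}{t+r-2+j+l}\binom{n-t-r+2-2j}{k-t-r+2-j-l}$ against $\binom{n-t}{k-t}$. The ratio of tail binomials, using $n\ge 2k$, contributes an exponential decay of roughly $2^{-(r+j-2)}$ (with a second-order error controlled by the auxiliary hypothesis $n\ge t(t-1)/(2\log 2)+2t-1$), while the combinatorial prefactor $\binom{t+r-2+2j}{t+r-2+j+l}$ is at most $2^{t+r-2+2j}$. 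The products then look roughly like $2^t$ times a geometric tail in $j$, and the condition $t\le 2^{r-2}\log 2-2$ is precisely the threshold making this product drop below $1$; the ``$\log 2$'' enters naturally via an inequality of the form $\log(1+x)\le x\log 2$ applied near the boundary regime.

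The main obstacle is the uniform estimate $|\ha_j(n,k,2,t+r-2)|<\binom{n-t}{k-t}$ over \emph{all} admissible $j\ge 1$, since the combinatorial factor grows exponentially in $j$ while the tail binomials decay only modestly when $n$ is as small as $2k$. I would attempt to control this either by proving a monotonicity/convexity property of $|\ha_j|$ in $j$ that reduces the problem to a small number of extremal values, or by a direct inductive comparison $|\ha_{j+1}|/|\ha_j|\le$ (explicit function of $n,k,r,t$), and finally applying the two hypotheses on $n$ to close the chain of inequalities.
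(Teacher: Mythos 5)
Your cascade lemma is correct --- it is essentially Fact \ref{fact-3.1} of the paper --- and the reduction ``either $\hf$ becomes a $(t+i)$-star along the way, or it ends up $2$-wise $(t+r-2)$-intersecting'' is sound. The fatal problem is the endgame. For $n$ as small as $2k$, the Ahlswede--Khachatrian maximum $\max_j|\ha_j(n,k,2,T)|$ with $T=t+r-2$ is \emph{not} below $\binom{n-t}{k-t}$: it is attained at $j\approx (k-T)/2$, where $\ha_j(2k,k,2,T)=\{F:|F\cap[T+2j]|\geq T+j\}$ is roughly $\{F\in\binom{[2k]}{k}:|F\cap[k]|\geq (k+T)/2\}$, whose size tends to $\tfrac12\binom{2k}{k}$ as $k\to\infty$ (the hypergeometric variable $|F\cap[k]|$ is symmetric about $k/2$ with standard deviation of order $\sqrt{k}$, and the shift $T/2$ is negligible for fixed $T$). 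Meanwhile the target is $\binom{2k-t}{k-t}\approx 2^{-t}\binom{2k}{k}$, so for every $t\geq 2$ the quantity you must bound exceeds the target by a factor of order $2^{t-1}$. The ``exponential decay of roughly $2^{-(r+j-2)}$'' you hope for in the tail binomials simply does not persist for large $j$; this is not the technical obstacle you flag but a structural failure: the AK extremal family at large $j$ is $2$-wise $(t+r-2)$-intersecting yet not $r$-wise $t$-intersecting, and nothing in your chain of reductions excludes it. Descending to the $2$-wise level discards exactly the exponential-in-$r$ gain that the hypothesis $t\leq 2^{r-2}\log 2-2$ is designed to exploit.

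The paper avoids this by never dropping below (almost) the full $r$-wise structure. It first reduces to $n=2k$ (Lemma \ref{lem-6.2} via Kruskal--Katona, together with the recursion \eqref{ineq-2.1} for odd $n$); at $n=2k$ it splits on whether $\hf$ is $(r-1)$-wise $(t+2)$-intersecting, and in that case applies the lattice-path bound $m(n,k,s,u)\leq \alpha_s^{u}\binom{n}{k}$ of Proposition \ref{prop-key3}, whose base $\alpha_{r-1}<\tfrac12+\tfrac{1}{2^{r-2}}$ carries the $r$-dependence that makes $t\leq 2^{r-2}\log 2-2$ sufficient; the complementary case is handled with the saturation Lemma \ref{lem-6.4}. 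To salvage your outline you would have to replace Ahlswede--Khachatrian by a bound exploiting the $s$-wise $(t+r-s)$-intersecting property for $s$ close to $r$, which is precisely what Proposition \ref{prop-key3} provides.
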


Let us show that for $k\leq t+r-2$ the only $r$-wise $t$-intersecting family is the $t$-star.

\begin{prop}\label{prop-1.7}
Suppose that $\hg$ is an $r$-wise $t$-intersecting $k$-graph that is not a $t$-star ($|\cap \hg| <t$). Then $k\geq t+r$ or $k=t+r-1$ and $\hg\subset \binom{Y}{k}$ for some $(k+1)$-element set $Y$.
\end{prop}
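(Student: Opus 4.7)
The plan is to pick $r{+}1$ sets of $\hg$ whose joint intersection drops below $t$, then use the $r$-wise hypothesis on the remaining $r$-tuples to force enough structure on the $G_i$.

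First I would choose $G_0,\ldots,G_r\in\hg$ with $I:=G_0\cap\cdots\cap G_r$ of size less than $t$; such a choice exists because $|\bigcap\hg|<t$. For each $j\in\{0,\ldots,r\}$ I set $I_j:=\bigcap_{i\neq j}G_i$; the $r$-wise $t$-intersecting hypothesis gives $|I_j|\geq t>|I|$, so $B_j:=I_j\setminus I$ is nonempty. A short check shows the $B_j$ are pairwise disjoint: any $x\in B_{j_1}$ lies in $G_{j_2}$ for $j_2\neq j_1$ (since $x\in I_{j_1}\subseteq G_{j_2}$), while every element of $B_{j_2}$ lies outside $G_{j_2}$ by definition.

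Because $I$ together with $\{B_j:j\neq i\}$ form $r{+}1$ pairwise disjoint subsets of $G_i$, and $|B_j|\geq t-|I|$,
\[
k\;=\;|G_i|\;\geq\;|I|+r\bigl(t-|I|\bigr)\;=\;rt-(r-1)|I|\;\geq\;t+r-1,
\]
the last step using $|I|\leq t-1$. If $k\geq t+r$ the first alternative of the proposition holds, so I may assume $k=t+r-1$. Then equality is forced throughout: $|I|=t-1$, each $|B_j|=1$ (write $\{a_j\}:=B_j$), and $G_i=I\cup\{a_j:j\neq i\}$ exactly. Setting $Y:=I\cup\{a_0,\ldots,a_r\}$ then gives $|Y|=k+1$ and $G_i=Y\setminus\{a_i\}$.

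It remains to show $G\subseteq Y$ for every $G\in\hg$. For any pair $\{j_1,j_2\}\subseteq\{0,\ldots,r\}$, applying the $r$-wise hypothesis to the $r$-tuple $\{G\}\cup\{G_i:i\neq j_1,j_2\}$ (allowing repetitions if $G$ equals some $G_i$) yields
\[
\bigl|G\cap\bigl(I\cup\{a_{j_1},a_{j_2}\}\bigr)\bigr|\;\geq\;t.
\]
Since $I\cup\{a_{j_1},a_{j_2}\}\subseteq Y$ has size $t{+}1$, $G$ misses at most one element of it. Running through all $\binom{r+1}{2}$ pairs and splitting into the two cases $I\subseteq G$ versus $|I\cap G|\leq t-2$ (possible because $|I|=t-1$), in the first case $G$ misses at most one of $\{a_0,\ldots,a_r\}$, while in the second $\{a_0,\ldots,a_r\}\subseteq G$; in either case a size count using $|G|=k=t+r-1$ forces $G\subseteq Y$. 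The main obstacle is precisely this final step — propagating the $\binom{r+1}{2}$ local near-containments into the single global conclusion $G\subseteq Y$; the disjointness of the $B_j$ and the rigidity at $k=t+r-1$ are routine but should be recorded carefully.
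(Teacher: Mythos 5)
Your overall strategy --- extracting a witness configuration whose intersection drops below $t$ and reading off the disjoint blocks $B_j$ --- is genuinely different from the paper's proof, which instead splits on whether some two members satisfy $|G_1\cap G_2|\leq k-2$ (successively shrinking the common intersection to force $k\geq t+r$) or the family is $2$-wise $(k-1)$-intersecting (directly exhibiting $Y$). Your block argument is attractive because it treats both alternatives in a single count. However, there is a genuine gap at the very first step: the claim that one can choose $r+1$ members $G_0,\ldots,G_r$ with $|G_0\cap\cdots\cap G_r|<t$ does not follow from $|\bigcap\hg|<t$, and is in fact false in general. Take $\hg=\binom{[t+r+1]}{t+r}$: this family is $r$-wise $t$-intersecting (any $r$ distinct members meet in $t+1$ elements), it is not a $t$-star since $\bigcap\hg=\emptyset$, yet every $r+1$ of its members intersect in at least $t$ elements. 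So the objects on which your whole argument rests need not exist.

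The repair keeps your argument intact: choose distinct $G_1,\ldots,G_m\in\hg$ with $|G_1\cap\cdots\cap G_m|<t$ and $m$ minimal. The $r$-wise hypothesis forces $m\geq r+1$, and minimality gives $|I_j|\geq t$ for each $j$, so the blocks $B_j$ are still nonempty and pairwise disjoint. The same count now yields $k\geq |I|+(m-1)\bigl(t-|I|\bigr)\geq t+m-2$. If $m\geq r+2$ you land in the first alternative $k\geq t+r$ and stop; only when $m=r+1$ and $k=t+r-1$ does the rigidity kick in, and from there your analysis (each $|B_j|=1$, $G_i=Y\setminus\{a_i\}$, and the pair constraints $|G\cap(I\cup\{a_{j_1},a_{j_2}\})|\geq t$) is correct. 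One small point in the endgame: in the subcase $\{a_0,\ldots,a_r\}\subseteq G$ the size count alone gives $|G\setminus Y|=t-2-|G\cap I|\geq 0$, which does not by itself force $G\subseteq Y$; you must invoke the pair constraint once more to get $|G\cap I|\geq t-2$. With these two fixes the proof is complete.
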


\begin{proof}
We distinguish two cases.

(i) There exist $G_1,G_2\in \hg$ with $|G_1\cap G_2|\leq k-2$.

Since $\hg$ is 2-wise $t$-intersecting, we infer that $|G_1\cap G_2|\geq t$. Choose a $t$-subset $T$ of $G_1\cap G_2$.
Since $\hg$ is not a $t$-star,  there exist $G_3\in \hg$ and $x\in T$ such that $x\notin G_3$. Then $|G_1\cap G_2\cap G_3| \leq |(G_1\cap G_2)\setminus \{x\}|= k-3$. Similarly,
we can choose successively $G_4,\ldots,G_r$ to satisfy $|G_1\cap \ldots \cap G_r|\leq k-r$. This proves $k-r\geq t$, i.e., $k\geq r+t$.

(ii) $\hg$ is 2-wise $(k-1)$-intersecting.

Pick arbitrary $G_1,G_2\in \hg$ and set $Y=G_1\cup G_2$, $Z=G_1\cap G_2$. Then $|Y|=k+1$ and $|Z|=k-1$. Since $\hg$ is 2-wise $t$-intersecting and $|Z|=k-1>t$, there exists $G_3\in \hg$ with $Z\not\subset G_3$. Since $\hg$ is 2-wise $(k-1)$-intersecting, $|G_i\cap G_3|\geq k-1$, $i=1,2$. It follows that $G_3\subset Y$. Without loss of generality, assume that $Y=[k+1]$ and  $G_i=[k+1]\setminus \{i\}$, $i=1,2,3$. If there exists $G\in \hg$ with $|G\cap [k+1]|\leq k-1$. Then there exist $x,y\in [k+1]$ such that $G\subset [k+1]\setminus \{x,y\}$. Let $i\in [3]\setminus \{x,y\}$.  Then $|G\cap G_i|\leq k+1-3=k-2$, contradicting the assumption that $\hg$ is 2-wise $(k-1)$-intersecting. Thus $\hg\subset \binom{Y}{k}$.
\end{proof}

Based on Proposition \ref{prop-1.7} in the sequel we always assume that $n\geq k\geq t+r$.

As to the corresponding problem for the non-uniform case, Erd\H{o}s-Ko-Rado \cite{ekr} proved $m(n,2,1)=2^{n-1}$. Then the first author \cite{F77Bulletin} established $m(n,3,2) = 2^{n-2}$. After several partial results  the proof of the following result was concluded in \cite{F19}:
\begin{align}\label{ineq-1.4}
 m(n,r,t) = 2^{n-t} \mbox{ if and only if } t\leq 2^r-r-1.
\end{align}

We call  a family $\hf\subset \binom{[n]}{k}$ {\it non-trivial} if $\cap \{F\colon F\in \hf\} =\emptyset$. Define
\begin{align*}
&m^*(n,r,t)= \max\left\{|\hf|\colon \hf\subset 2^{[n]} \mbox{ is non-trivial $r$-wise $t$-intersecting}\right\},\\[3pt]
&m^*(n,k,r,t)= \max\left\{|\hf|\colon \hf\subset \binom{[n]}{k} \mbox{ is non-trivial $r$-wise $t$-intersecting}\right\}.
\end{align*}

\begin{thm}[Brace-Daykin-Frankl Theorem (cf. \cite{BD} for $t=1$ and \cite{F91} for $t\geq 2$)]\label{thm-bd}
For $t+r\leq n$ and  $t<2^r-r-1$,
\begin{align}\label{ineq-1.5}
m^*(n,r,t)= |\ha_1(n,r,t)| =(t+r+1) 2^{n-t-r}.
\end{align}
\end{thm}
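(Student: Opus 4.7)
The plan is to follow Frankl's compression framework: reduce to shifted families, prove a structural containment in one of the Frankl families $\ha_j$, and then perform a numerical comparison that singles out $\ha_1$. First I would apply the shifting operators $S_{ij}$, which preserve $|\hf|$ and the $r$-wise $t$-intersecting property; after passing to a left-compressed family we may assume $\hf$ is shifted. A technical caveat is that shifting can in principle destroy non-triviality, which is handled either by using a restricted variant (performing $S_{ij}$ only when the result remains non-trivial) or by tracking the kernel $\cap\hf$ through the shifts and dealing with trivial intermediate families separately.

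The heart of the argument, and the main obstacle, is the structural claim: for any shifted non-trivial $r$-wise $t$-intersecting family there exists an integer $j\ge 1$ such that $\hf\subseteq\ha_j(n,r,t)$ (after possibly permuting $[n]$). A natural approach is to define $j$ as the smallest index for which some $F\in\hf$ misses exactly $j$ elements of the prefix $[t+rj]$, and then leverage shiftedness to propagate the bound $|[t+rj]\setminus F'|\le j$ to every $F'\in\hf$. The combinatorial details depend on carefully comparing different members of $\hf$ using the $r$-wise $t$-intersecting condition applied to suitably chosen $r$-tuples, and constitute the main technical work of the proof.

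Granted the structural containment, the theorem reduces to verifying $|\ha_j(n,r,t)|\le(t+r+1)2^{n-t-r}$ for every $j\ge 1$. Since $|\ha_j(n,r,t)|=\bigl(\sum_{i=0}^{j}\binom{t+rj}{i}\bigr)\cdot 2^{n-t-rj}$, this amounts to
\[
\sum_{i=0}^{j}\binom{t+rj}{i}\le (t+r+1)\cdot 2^{r(j-1)},
\]
which holds with equality at $j=1$ and, under the hypothesis $t<2^{r}-r-1$, can be checked at $j=2$ by direct computation and then propagated to larger $j$ by a short induction (the left side grows polynomially in $j$ while the right side grows exponentially in $j$). The equality case $\hf=\ha_1(n,r,t)$ then follows by tracking equality through the structural step and the numerical comparison, since strict inequality in either step forces $|\hf|<(t+r+1)2^{n-t-r}$.
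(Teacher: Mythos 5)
The paper does not prove Theorem \ref{thm-bd}; it is quoted from the literature, so your proposal must stand on its own. Its final numerical reduction is fine in spirit, but the step everything rests on --- the claim that every shifted non-trivial $r$-wise $t$-intersecting family is contained in a \emph{single} $\ha_j(n,r,t)$ with $j\geq 1$ --- is false. What shiftedness actually gives (Corollary \ref{cor-2.2}) is only $\hf\subseteq\bigcup_j\ha_j$, with different members hitting the line $y=(r-1)x+t$ at different abscissae, and this cannot be upgraded to containment in one $\ha_j$. Concretely, take $m=2t+r$ and $n\geq m$, and let
\begin{align*}
\hb \;=\; \bigl\{B\subseteq[n]\colon [t+r-2]\subseteq B,\ B\cap[t+r-1,m]\neq\emptyset\bigr\}\;\cup\;\bigl\{B\subseteq[n]\colon [m]\setminus\{j\}\subseteq B \text{ for some } j\in[t+r-2]\bigr\},
\end{align*}
the non-uniform analogue of the family $\hb(n,k,r,t)$ from the concluding remarks. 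One checks directly that $\hb$ is shifted, non-trivial, and $r$-wise $t$-intersecting (any $r$ members with at least two of the first type share at least $t+r-2-(r-2)=t$ elements of $[t+r-2]$; with exactly one, at least $(t+r-1)-(r-1)=t$ elements of $[m]$; with none, all of $[t+r-1,m]$, of size $t+2$). Yet the member $B_0=[t+r-2]\cup\{m\}$ has only $t+r-1$ elements, so $B_0\in\ha_j$ forces $t+(r-1)j\leq t+r-1$, i.e.\ $j\leq 1$, and $B_0\notin\ha_1$ since $m>t+r$ means $B_0\not\subseteq[t+r]$. Hence $\hb\not\subseteq\ha_j$ for any $j\geq 1$, and your "main technical work" cannot succeed no matter how the $r$-tuples are chosen.

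Because of this the proof collapses at its central step; the numerical inequality $\sum_{i\leq j}\binom{t+rj}{i}\leq(t+r+1)2^{r(j-1)}$, even if verified (and for $r=3$ the large-$j$ comparison of $\binom{3j+t}{j}$ against $8^j$ is not quite the triviality you suggest), bounds the wrong quantity. The actual arguments in the literature are different: for $t\geq 2$ Frankl's proof combines Fact \ref{fact-3.1} (a non-trivial $r$-wise $t$-intersecting family is $s$-wise $(t+r-s)$-intersecting for all $2\leq s<r$) with the random-walk/weight estimates of the type appearing in Lemma \ref{lem-key1}, i.e.\ one bounds the total measure of paths hitting the relevant lines rather than locating $\hf$ inside one Frankl family. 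Your caveat about shifting possibly destroying non-triviality is a real issue and is usually handled exactly via Fact \ref{fact-3.1}, since the stronger $s$-wise intersection properties \emph{are} preserved by shifting.
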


Let us recall some notations and useful results.
For $i\in [n]$, define
\[
\hf(i) =\left\{F\setminus \{i\}\colon i\in F\in \hf\right\},\ \hf(\bar{i}) = \left\{F\colon i\notin F\in \hf\right\}.
\]
For $P\subset Q\subset [n]$, define
\[
\hf(Q)= \left\{F\setminus Q\colon Q\subset F\right\},\ \hf(P,Q)= \left\{F\setminus Q\colon  F\cap Q=P\right\}.
\]
Let $X$ be a finite set. 
For any $\hf\subset \binom{X}{k}$ and $1\leq b< k$, define the {\it $b$th shadow}  $\partial^{(b)} \hf$ as
\[
\partial^{(b)} \hf =\left\{E\in \binom{X}{k-b}\colon \mbox{there exists }F\in \hf \mbox{ such that }E\subset F\right\}.
\]
If $b=1$ then we simply write $\partial \hf$ and call it {\it the shadow} of $\hf$.
Define the {\it up shadow} $\partial^+ \hf$  as
\[
\partial^+ \hf =\left\{G\in \binom{X}{k+1}\colon \mbox{ there exists } F\in \hf \mbox{ such that  }F\subset G\right\}.
\]

Sperner  \cite{Sperner} proved the following result.
 
\begin{thm}[\cite{Sperner}]\label{thm-sperner}
 For $\hf\subset \binom{[n]}{k}$,
\begin{align}\label{ineq-sperner}
\frac{|\partial^+ \hf|}{\binom{n}{k+1}} \geq \frac{|\hf|}{\binom{n}{k}}.
\end{align}
\end{thm}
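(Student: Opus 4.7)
The plan is a standard double-counting argument on incidence pairs between $\hf$ and $\partial^+\hf$. Specifically, I would introduce
\[
\hi = \{(F,G) : F \in \hf,\ G \in \partial^+\hf,\ F \subset G\}
\]
and estimate $|\hi|$ in two different ways, once by summing over the left coordinate and once by summing over the right.

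The count over $F$ is exact: every $k$-set $F \in \hf$ has exactly $n-k$ supersets of size $k+1$ in $[n]$, and each of them automatically lies in $\partial^+\hf$ by the definition of the up-shadow. Hence $|\hi| = (n-k)|\hf|$. The count over $G$ is an upper bound: each $G \in \partial^+\hf$ has exactly $k+1$ subsets of size $k$, and at most all of them lie in $\hf$. Hence $|\hi| \leq (k+1)|\partial^+\hf|$.

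Combining the two estimates yields $(n-k)|\hf| \leq (k+1)|\partial^+\hf|$. The desired inequality then follows from the elementary identity $(n-k)\binom{n}{k} = (k+1)\binom{n}{k+1}$, which after rearrangement turns the bound into exactly
\[
\frac{|\partial^+\hf|}{\binom{n}{k+1}} \geq \frac{|\hf|}{\binom{n}{k}}.
\]
I do not anticipate any real obstacle here: the proof is purely enumerative, and the only point requiring attention is verifying that every $(k+1)$-superset of a member of $\hf$ automatically belongs to $\partial^+\hf$, which is immediate. Equality in the bound characterises the case where every $(k+1)$-set in $\partial^+\hf$ has all its $k$-subsets in $\hf$, a fact that could also be recorded in the proof if needed later.
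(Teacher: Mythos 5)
Your double-counting argument is correct and is the standard proof of this local LYM inequality; the paper itself only cites Sperner without reproducing a proof, and your argument is exactly the canonical one (count pairs $(F,G)$ with $F\in\hf$, $F\subset G$, $|G|=k+1$, getting $(n-k)|\hf|\leq(k+1)|\partial^+\hf|$, then use $(n-k)\binom{n}{k}=(k+1)\binom{n}{k+1}$). The only pedantic caveat is that the stated ratio form presupposes $k<n$, which is implicit throughout the paper.
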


For $\ha,\hb\subset \binom{[n]}{k}$, we say that $\ha,\hb$ are {\it cross-intersecting} if $A\cap B\neq \emptyset$ for all $A\in \ha$ and $B\in \hb$. 

\begin{thm}[\cite{Hilton}]
Let $\ha,\hb\subset \binom{[n]}{k}$ be cross-intersecting. Then for $n\geq 2k$,
\begin{align}\label{ineq-1.7}
|\ha|+|\hb| \leq \binom{n}{k}.
\end{align}
\end{thm}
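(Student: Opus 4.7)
The plan is to convert the cross-intersecting condition into an anti-containment condition by passing to complements, and then apply an iterated form of the Sperner shadow inequality \eqref{ineq-sperner} to compare $|\hb|$ with the size of a $k$-uniform shadow family that is disjoint from $\ha$.

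First, I would set $\hb^{c} := \{[n]\setminus B : B \in \hb\} \subseteq \binom{[n]}{n-k}$; the cross-intersecting hypothesis $A \cap B \neq \emptyset$ is equivalent to the statement that no $A \in \ha$ is contained in any member of $\hb^{c}$. Since $n \geq 2k$, the integer $n-2k$ is nonnegative, so I may form the iterated shadow $\hc := \partial^{(n-2k)} \hb^{c} \subseteq \binom{[n]}{k}$. Every $C \in \hc$ satisfies $C \subseteq [n]\setminus B$ for some $B \in \hb$, hence $C \cap B = \emptyset$ and consequently $C \notin \ha$. Thus $\ha$ and $\hc$ are disjoint subfamilies of $\binom{[n]}{k}$, which yields $|\ha| + |\hc| \leq \binom{n}{k}$.

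Second, I would show $|\hc| \geq |\hb|$ using \eqref{ineq-sperner}. Passing to complements in Theorem \ref{thm-sperner} converts the up-shadow inequality into the dual down-shadow inequality $|\partial \hg|/\binom{n}{s-1} \geq |\hg|/\binom{n}{s}$ valid for any $\hg \subseteq \binom{[n]}{s}$ with $1 \leq s \leq n-1$. Iterating this $n-2k$ times, starting from $\hb^{c} \subseteq \binom{[n]}{n-k}$ and decreasing the uniformity by $1$ at each step down to $k$, yields
\[
\frac{|\partial^{(n-2k)} \hb^{c}|}{\binom{n}{k}} \geq \frac{|\hb^{c}|}{\binom{n}{n-k}} = \frac{|\hb|}{\binom{n}{k}},
\]
that is $|\hc| \geq |\hb|$. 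Combined with the disjointness bound above, this gives $|\ha|+|\hb| \leq |\ha|+|\hc| \leq \binom{n}{k}$, as desired.

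The only subtlety is the iterated form of Sperner's inequality. It is a routine consequence of the stated version \eqref{ineq-sperner} via complementation, but one should verify that each intermediate shadow level $n-k, n-k-1,\ldots,k$ lies in the admissible range $[1,n-1]$, which is ensured by $k \geq 1$ together with $n \geq 2k$. No ingredients beyond \eqref{ineq-sperner} are needed, and the hypothesis $n \geq 2k$ is used precisely to make the intermediate shadow $\hc$ land in $\binom{[n]}{k}$.
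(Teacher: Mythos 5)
Your proof is correct. The paper itself gives no argument for this statement -- it is quoted from Hilton's paper as a black box -- so there is nothing internal to compare against; but your derivation is the standard one and it is sound. The complementation step correctly translates cross-intersection into the statement that no member of $\ha$ lies below a member of $\hb^{c}$, the dual (down-shadow) form of \eqref{ineq-sperner} does follow from Theorem \ref{thm-sperner} by the substitution $\hg\mapsto\hg^{c}$ together with the identity $\partial^{+}(\hg^{c})=(\partial\hg)^{c}$, and the telescoping of the normalized inequalities from level $n-k$ down to level $k$ gives $|\hc|\geq|\hb|$ as claimed. Two trivial points worth making explicit: when $n=2k$ the iterated shadow is empty and $\hc=\hb^{c}$, which is exactly the family of complements and is still disjoint from $\ha$; and the argument tacitly assumes $k\geq 1$, which is harmless here since the paper only invokes the result with $k\geq t+r$. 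Your observation that $n\geq 2k$ is used precisely to make the shadow of $\hb^{c}$ land at level $k$ is the right way to see where the hypothesis enters.
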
 

We need the following version of the Kruskal-Katona Theorem.

\begin{thm}[\cite{Kruskal,Katona}]\label{thm-kk}
Let $n,k,m$ be positive integers with $k\leq m\leq n$ and let $\hf \subset \binom{[n]}{k}$  and. If $|\hf|>\binom{m}{k}$  then
\[
|\partial \hf|>\binom{m}{k-1}.
\]
\end{thm}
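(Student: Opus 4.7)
The plan is to prove this shadow bound by the classical compression technique of Kruskal and Katona. First, I would introduce the shifting operators $S_{ij}$ for $1\le i<j\le n$: $S_{ij}(\hf)$ replaces every $F\in\hf$ with $j\in F$, $i\notin F$, and $(F\setminus\{j\})\cup\{i\}\notin\hf$ by $(F\setminus\{j\})\cup\{i\}$. A routine pairing argument shows $|S_{ij}(\hf)|=|\hf|$ and $|\partial S_{ij}(\hf)|\le|\partial\hf|$. After finitely many shifts the family becomes left-shifted, so it suffices to prove the bound under that assumption.

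Next, I would argue by induction on $n$. Split $\hf$ along the element $n$: let $\hf_0=\{F\in\hf\colon n\notin F\}\subset\binom{[n-1]}{k}$ and $\hf_1=\{F\setminus\{n\}\colon n\in F\in\hf\}\subset\binom{[n-1]}{k-1}$. Partitioning $\partial\hf$ by whether or not it contains $n$ gives $|\partial\hf|=|(\partial\hf_0)\cup\hf_1|+|\partial\hf_1|$. For $k<n$ the left-shiftedness of $\hf$ forces $\hf_1\subset\partial\hf_0$ (for any $F\in\hf_1$ and any $i\in[n-1]\setminus F$, applying $S_{i,n}$ to $F\cup\{n\}\in\hf$ exhibits $F\cup\{i\}\in\hf_0$), so the identity collapses to $|\partial\hf|=|\partial\hf_0|+|\partial\hf_1|$.

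From $|\hf|>\binom{m}{k}=\binom{m-1}{k}+\binom{m-1}{k-1}$, at least one of $|\hf_0|>\binom{m-1}{k}$ or $|\hf_1|>\binom{m-1}{k-1}$ holds. Applying the induction hypothesis to whichever of $\hf_0,\hf_1$ is large, combined with $|\partial\hf_0|\ge|\hf_1|$ (coming from $\hf_1\subset\partial\hf_0$) and Pascal's identity $\binom{m}{k-1}=\binom{m-1}{k-1}+\binom{m-1}{k-2}$, then delivers the desired strict inequality $|\partial\hf|>\binom{m}{k-1}$.

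The main obstacle is the strict/non-strict bookkeeping in the induction step: when only one of $\hf_0,\hf_1$ is ``large enough'' one still needs a matching non-strict bound on the other summand for the two budgets to add up. The cleanest resolution is to strengthen the inductive hypothesis to the cascade form of Kruskal-Katona (if $|\hf|\ge\binom{a_k}{k}+\binom{a_{k-1}}{k-1}+\cdots$ is the $k$-cascade expansion, then $|\partial\hf|\ge\binom{a_k}{k-1}+\binom{a_{k-1}}{k-2}+\cdots$); this statement is stable under the two-part decomposition above, and since $|\hf|>\binom{m}{k}$ forces $a_k\ge m$ in the cascade expansion, it immediately yields $|\partial\hf|>\binom{m}{k-1}$.
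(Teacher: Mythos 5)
First, a point of reference: the paper does not prove this statement at all --- Theorem \ref{thm-kk} is quoted from Kruskal and Katona as a black box --- so your proposal can only be measured against the classical argument. Your overall strategy (compress to a shifted family, split off one element, induct, and strengthen the induction hypothesis to the cascade form) is the standard and correct one, and the individual facts you invoke are true as stated: $|S_{ij}(\hf)|=|\hf|$, $\partial S_{ij}(\hf)\subset S_{ij}(\partial\hf)$, the partition $|\partial\hf|=|(\partial\hf_0)\cup\hf_1|+|\partial\hf_1|$, and the containment $\hf_1\subset\partial\hf_0$ for shifted $\hf$ with $k<n$.

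The genuine gap is in closing the induction, and it is more than bookkeeping. Decomposing along the \emph{largest} element $n$, the containment $\hf_1\subset\partial\hf_0$ only pays off in the case $|\hf_1|>\binom{m-1}{k-1}$ (then $|\partial\hf_0|\ge|\hf_1|$ plus induction on $\hf_1$ finishes). In the complementary case, where $|\hf_0|>\binom{m-1}{k}$ but $\hf_1$ may be tiny or empty, you have no lower bound at all on $|\partial\hf_1|$, and passing to the cascade form does not by itself repair this: the cascade induction run on the pair $(\hf_0,\hf_1)$ still requires a superadditivity inequality for the Kruskal--Katona shadow function (that the shadow bound for $|\hf_0|$ at level $k$ plus the shadow bound for $|\hf_1|$ at level $k-1$ dominates the shadow bound for $|\hf_0|+|\hf_1|$), which is a nontrivial lemma you would still have to prove; your sentence ``this statement is stable under the two-part decomposition above'' is exactly the unproved point. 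The classical fix is to decompose along the \emph{smallest} element instead: for a shifted family one has $\partial\bigl(\hf(\bar 1)\bigr)\subset\hf(1)$ (delete any element from an $F$ with $1\notin F$ and shift it down to $1$), hence $|\partial\hf|=|\hf(1)|+|\partial(\hf(1))|$, and the bad case ``$|\hf(1)|$ smaller than the truncated cascade'' is now \emph{impossible}: it would force $|\hf(\bar 1)|$ to exceed the truncated cascade at level $k$, so by the induction hypothesis $|\partial(\hf(\bar 1))|$ would exceed $|\hf(1)|$, contradicting the containment. With that one change of decomposition your Pascal-identity bookkeeping goes through verbatim. (Also record the harmless restriction $k\ge 2$, which the paper's definition of $\partial$ imposes anyway; for $k=1$ the stated strict inequality is false.)
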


We also need an inequality concerning the $b$th shadow of an $r$-wise $t$-intersecting family. 

\begin{thm}[\cite{F91-2}]\label{thm-F91}
Let $\hf\subset \binom{[n]}{k}$ be an $r$-wise $t$-intersecting family. Then for $0<b\leq t$ we have
\begin{align}\label{ineq-key4}
|\partial^{(b)} \hf| \geq |\hf| \min_{0\leq i\leq \frac{k-t}{r-1}} \frac{\binom{ri+t}{i+b}}{\binom{ri+t}{i}}.
\end{align}
\end{thm}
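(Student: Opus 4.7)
My plan is to use Frankl's shifting technique together with a partition of $\hf$ by a ``profile index.'' The target bound is tight exactly on the families $\ha_i(n,k,r,t)$, which leave out at most $i$ elements of an initial segment $[t+ri]$; this strongly suggests working inside such initial segments.

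First, apply the left-shifting operators $S_{ij}$ for all $1\leq i<j\leq n$ to $\hf$. These operations preserve the $r$-wise $t$-intersecting property and do not increase $|\partial^{(b)}\hf|$, so I may assume $\hf$ is shifted. Next, I would establish the structural claim that for a shifted $r$-wise $t$-intersecting family, every $F\in\hf$ admits some $i\in\{0,1,\ldots,\lfloor(k-t)/(r-1)\rfloor\}$ with $|F\cap[t+ri]|\geq t+(r-1)i$. If no such $i$ worked, then iterating the shifting construction would produce $r$ sets in $\hf$ whose profiles on the nested initial segments $[t], [t+r], [t+2r], \ldots$ are too sparse, giving a common intersection of size less than $t$, a contradiction. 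Let $i(F)$ be the \emph{minimal} such witness and partition $\hf=\bigsqcup_i\hf_i$ accordingly.

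For each fixed $i$, I would work inside the segment $[t+ri]$. Every $F\in\hf_i$ misses at most $i$ of the first $t+ri$ elements, so the trace $F\cap[t+ri]$ ranges over a subfamily of $\binom{[t+ri]}{\geq t+(r-1)i}$. Taking a $b$-shadow transforms such a trace into one missing at most $i+b$ elements of the segment. Since the total number of subsets of $[t+ri]$ of the relevant size is $\binom{t+ri}{i+b}$ versus $\binom{t+ri}{i}$, a local Kruskal--Katona estimate on the segment, combined with the unchanged completion of $F$ outside $[t+ri]$, yields the desired class-wise inequality $|\partial^{(b)}\hf_i|\geq|\hf_i|\cdot\binom{t+ri}{i+b}/\binom{t+ri}{i}$.

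Summing across $i$ and bounding each factor below by the minimum over $i$ would then give the theorem. The main obstacle, I expect, is that the $b$-shadows $\partial^{(b)}\hf_i$ for different $i$ can overlap, so a naive sum double-counts. To fix this one either needs a careful injective assignment of each $E\in\partial^{(b)}\hf$ to a unique class (presumably via minimality of $i(F)$ together with the shifted structure, so that $E$ admits a canonical witness $F$ in the class with the smallest admissible $i$), or an induction on $k-t$ (or on $|\hf|$) that ``peels off'' one profile class at a time while maintaining the $r$-wise $t$-intersecting property on the residual family. This bookkeeping is the delicate part where I would expect the bulk of the technical work to lie.
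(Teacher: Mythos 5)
You should note at the outset that the paper itself does not prove Theorem \ref{thm-F91}: it is quoted from \cite{F91-2}, so there is no in-paper argument to match. Judged on its own, your sketch assembles the correct ingredients: shifting, the partition $\hf=\bigsqcup_i\hf_i$ by the first index $i$ with $|F\cap[t+ri]|=t+(r-1)i$ (this is exactly Corollary \ref{cor-2.2} and the paragraph preceding it), and, for each fixed tail $F\setminus[t+ri]$, the iterated local LYM (normalized matching) inequality on the level $\binom{[t+ri]}{t+(r-1)i}$, which does yield the class-wise bound $|\partial^{(b)}\hf_i|\geq|\hf_i|\binom{t+ri}{i+b}\big/\binom{t+ri}{i}$. (Call this local LYM rather than Kruskal--Katona; the normalized form is what produces the stated ratio.)

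However, the argument stops exactly where the proof actually is. The overlap of the shadows $\partial^{(b)}\hf_i$ across classes is not a bookkeeping nuisance but a genuine phenomenon: already for $r=3$, $t=1$, $b=1$, a shifted family such as $\ha_1(n,k,3,1)$ contains sets $F=\{1,3,4\}\cup D\in\hf_0$ and $F'=\{2,3,4\}\cup D\in\hf_1$, and $\{3,4\}\cup D$ lies in both shadows, so $\sum_i|\partial^{(b)}\hf_i|$ strictly overcounts $|\partial^{(b)}\hf|$. Your proposed repair via the minimality of $i(F)$ does not work as stated once $b\geq2$: for $F\in\hf_i$ one only knows $|F\cap[t+rj]|\leq t+(r-1)j-1$ for $j<i$, so after deleting $b$ elements from $F\cap[t+ri]$ one cannot recover $i$ from the shadow set $E$ by locating the first $j$ with $|E\cap[t+rj]|\geq t+(r-1)j-b$; the classes are not separated by that statistic. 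The induction alternative suffers from the same defect, since the shadow of the residual family still meets the shadow of the peeled-off class, so the bound does not telescope. What is needed is an explicit assignment, to each $F\in\hf_i$, of a sub-collection of its $b$-shadow of relative size $\binom{t+ri}{i+b}/\binom{t+ri}{i}$ such that the assigned collections are pairwise disjoint over all of $\hf$ (this is the content of \cite{F91-2}, and it requires choosing \emph{which} $b$ elements to delete far more carefully than ``any $b$ elements of $F\cap[t+ri]$''). As it stands, the central injectivity/disjointness step is missing, so the proposal is an outline rather than a proof.
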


\section{Shifting and lattice paths}

In \cite{ekr}, Erd\H{o}s, Ko and Rado introduced a very powerful tool in extremal set theory, called shifting.
For $\hf\subset \binom{[n]}{k}$ and $1\leq i<j\leq n$, define the shifting operator
$$S_{ij}(\hf)=\left\{S_{ij}(F)\colon F\in\hf\right\},$$
where
$$S_{ij}(F)=\left\{
                \begin{array}{ll}
                 F':= (F\setminus\{j\})\cup\{i\}, & \mbox{ if } j\in F, i\notin F \text{ and } F'\notin \hf; \\[5pt]
                  F, & \hbox{ otherwise.}
                \end{array}
              \right.
$$

It is well known (cf. \cite{F87}) that the shifting operator preserves the size of $\hf$ and the $r$-wise $t$-intersecting property. Thus one can apply the shifting operator to  $\hf$ when considering $m(n,k,r,t)$.

A family $\hf\subset \binom{[n]}{k}$ is called {\it shifted} if $S_{ij}(\hf)=\hf$ for all $1\leq i<j\leq n$. It is easy to show (cf. \cite{F87}) that every family can be transformed into a shifted family by applying the shifting operator repeatedly. Thus we can always assume that the family $\hf$ is shifted when detemining $m(n,k,r,t)$.

Let us define the shifting partial order.
Let $A=\{a_1,a_2,\ldots,a_k\}$ and $B=\{b_1,b_2,\ldots,b_k\}$ be two distinct $k$-sets with $a_1<a_2<\ldots<a_k$ and $b_1<b_2<\ldots<b_k$. We say that $A$ {\it precedes} $B$ in shifting partial order, denoted by $A\prec B$ if $a_i\leq b_i$ for $i=1,2,\ldots,k$.

Let us recall two properties of shifted families:

\begin{lem}[cf. \cite{F87}]
If $\hf\subset \binom{[n]}{k}$ is a shifted family, then $A\prec B$ and $B\in \hf$ always imply $A\in \hf$.
\end{lem}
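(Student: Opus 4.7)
The plan is induction on $d := |A\setminus B|$, which equals $|B\setminus A|$ since $|A|=|B|=k$. If $d=0$ then $A=B\in\hf$. For the inductive step I want to produce a single elementary shift taking $B$ to a set $B'\in\hf$ with $A\prec B'$ and $|A\setminus B'|=d-1$; iterating then delivers $A\in\hf$.

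Set $a:=\min(A\setminus B)$ and $b:=\min(B\setminus A)$. The first step is to check $a<b$. Suppose instead that $a>b$ (equality is impossible since $a\notin B$ but $b\in B$). Then every element below $b$ of either $A$ or $B$ lies in $A\cap B$, so $A\cap[b-1]=B\cap[b-1]=:\{b_1,\ldots,b_p\}$ and $b=b_{p+1}$. From $A\prec B$ one has $a_{p+1}\leq b_{p+1}=b$, and $b\notin A$ forces $a_{p+1}<b$, placing $a_{p+1}$ in $\{b_1,\ldots,b_p\}$; but $a_{p+1}>a_p=b_p$, a contradiction.

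Since $a<b$, $a\notin B$, and $b\in B$, shiftedness of $\hf$ gives $B':=(B\setminus\{b\})\cup\{a\}\in\hf$. The central step is to verify $A\prec B'$. Setting $q:=|B\cap[a-1]|$ and $p:=|B\cap[b-1]|$, inserting $a$ and removing $b$ yields sorted entries $b'_l=b_l$ for $l\leq q$ or $l\geq p+2$, $b'_{q+1}=a$, and $b'_l=b_{l-1}$ for $q+2\leq l\leq p+1$. Outside the middle range the inequality $a_l\leq b'_l$ is immediate from $A\prec B$, and at $l=q+1$ it follows from $A\cap[a-1]=B\cap[a-1]$ (both sides equal $A\cap B\cap[a-1]$, using $a<b$), which gives $a=a_{q+1}$.

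The main obstacle is the middle range $q+2\leq l\leq p+1$, where $b'_l=b_{l-1}$ is strictly smaller than $b_l$, so $a_l\leq b_{l-1}$ is genuinely stronger than the hypothesis $a_l\leq b_l$. The saving observation is that $b_{q+1},\ldots,b_p$ all lie in $A$, since they are in $B$ and below $b=\min(B\setminus A)$. Hence $\{b_{q+1},\ldots,b_p\}$ is a $(p-q)$-element subset of $\{x\in A:x>a\}=\{a_{q+2},\ldots,a_k\}$, and the $j$-th smallest element of a subset is at least the $j$-th smallest element of the superset; taking $j=l-q-1$ gives $b_{l-1}\geq a_l$, as required. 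This proves $A\prec B'$, and since $|A\setminus B'|=d-1$, induction completes the argument.
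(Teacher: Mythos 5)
Your proof is correct. The paper states this lemma with only a citation to \cite{F87} and includes no proof of its own, and your argument---a single shift $S_{ab}$ with $a=\min(A\setminus B)$, $b=\min(B\setminus A)$ producing $B'\in\hf$ with $A\prec B'$ and $|A\setminus B'|$ decreased by one, with the only delicate point (the indices $q+2\le l\le p+1$, where the sorted entry drops from $b_l$ to $b_{l-1}$) handled correctly via the observation that $b_{q+1},\ldots,b_p$ all lie in $\{a_{q+2},\ldots,a_k\}$---is the standard one and is complete.
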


\begin{lem}[\cite{F87}]\label{lem-2.4}
Let $\hf\subset  \binom{[n]}{k}$ be a shifted family. Then $\hf$ is $r$-wise $t$-intersecting if and only if
for every $F_1,\ldots,F_r\in \hf$ there exists $s$ such that
\begin{align}\label{ineq-2.5}
\sum_{1\leq i\leq r} |F_i\cap [s]| \geq (r-1) s+t.
\end{align}
\end{lem}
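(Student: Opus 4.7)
The plan is to prove the two implications separately, with the shifted hypothesis used only in the forward direction.

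For the ``if'' direction, no shifted assumption is required. Given any $F_1,\ldots,F_r\in\hf$ and the postulated $s$ with $\sum_i|F_i\cap[s]|\geq(r-1)s+t$, set $B_i:=[s]\setminus F_i$. Then $\sum_i|B_i|=rs-\sum_i|F_i\cap[s]|\leq s-t$, and by the union bound $|\bigcup_i B_i|\leq s-t$. Taking complements inside $[s]$ gives $|F_1\cap\cdots\cap F_r\cap[s]|\geq t$, so $\hf$ is $r$-wise $t$-intersecting.

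For the ``only if'' direction, assume $\hf$ is shifted and $r$-wise $t$-intersecting, and fix $F_1,\ldots,F_r\in\hf$. Let $D:=F_1\cap\cdots\cap F_r$ and $d:=\max D$; then $|D|\geq t$. Writing $c_j:=|\{i:j\in F_i\}|$, a direct calculation (using $c_j=r$ iff $j\in D$) gives
\[
\sum_i|F_i\cap[d]|-(r-1)d=|D|-\Delta,\quad\text{where}\quad\Delta:=\sum_{j\in[d]\setminus D}(r-1-c_j)\geq 0.
\]
The goal reduces to showing $\Delta\leq|D|-t$. I would argue by contradiction: suppose $\Delta>|D|-t$, so some $j\in[d]\setminus D$ has $c_j\leq r-2$, meaning there exist $i_1\neq i_2$ with $j\notin F_{i_1}\cup F_{i_2}$. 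Since $d\in D\subseteq F_{i_1}$ and $j<d$, the shifted property yields $F_{i_1}':=(F_{i_1}\setminus\{d\})\cup\{j\}\in\hf$. The $r$-tuple obtained by replacing $F_{i_1}$ with $F_{i_1}'$ has intersection $D\setminus\{d\}$ (the element $j$ is not added because $j\notin F_{i_2}$), so $|D|$ drops by exactly one.

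The core bookkeeping step is to verify that after this swap the new deficit $\Delta'$ also satisfies $\Delta'\leq\Delta-1$. Once this is in hand, iterating the swap produces a sequence of $r$-tuples in $\hf$ along which $|D|$ strictly decreases; after at most $|D|-t+1$ swaps we reach an $r$-tuple with intersection of size less than $t$, contradicting the $r$-wise $t$-intersecting hypothesis. The main obstacle is exactly this bookkeeping: one must track how the new maximum $d':=\max(D\setminus\{d\})$ shrinks the summation range and how the only updated values $c_d$ and $c_j$ combine to guarantee a net decrease of at least one in the deficit.
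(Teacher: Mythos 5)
Your ``if'' direction is correct; it is the paper's argument run through complements (union bound on the sets $[s]\setminus F_i$), and indeed needs no shiftedness.

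The ``only if'' direction has a fatal structural flaw. You reduce the claim to proving $\Delta\leq|D|-t$, i.e.\ to verifying \eqref{ineq-2.5} at the specific value $s=d=\max(F_1\cap\cdots\cap F_r)$. But the lemma does not assert that $s=\max D$ works, and in general it does not: take $\hf$ to be the full $1$-star $\{F\colon 1\in F\}$ (shifted and $r$-wise $1$-intersecting) and $F_1=\cdots=F_r=\{1,n-k+2,\ldots,n\}$. Then $d=n$ and $\sum_i|F_i\cap[d]|=rk<(r-1)n+1$ once $n$ is large, so $\Delta>|D|-t$ even though the lemma holds (with $s=1$). Hence your contradiction hypothesis is not absurd, and the iteration cannot be forced to run until $|D|<t$: each swap requires a $j$ below the current maximum with $c_j\leq r-2$, i.e.\ a strictly positive current deficit, so what you need is a \emph{lower} bound on the successive deficits (something like $\Delta'\geq\Delta-1$, enough to preserve $\Delta'>|D'|-t$). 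Your bookkeeping inequality $\Delta'\leq\Delta-1$ is an upper bound and is true but useless; in fact $\Delta'$ can crash to $0$ in a single step, because the summation range shrinks from $[d]$ to $[d']$ and every deficient element strictly between $d'$ and $d$ drops out. In the star example above the process stalls exactly when $D=\{1\}$, with $|D|=t$ and no contradiction, and the equality $\Delta''=0$ you are left with concerns the final tuple, not the original one, so it cannot be transported back.

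The repair is essentially what the paper does: take $x$ to be the $t$-th \emph{smallest} element of $F_1\cap\cdots\cap F_r$ rather than the largest, so that $|D\cap[x]|=t$ exactly, and replace the iteration by a minimal-counterexample argument (minimize $\sum_i\sum_{j\in F_i}j$ over all $r$-tuples violating \eqref{ineq-2.5} for every $s$). If \eqref{ineq-2.5} fails at $s=x$, counting multiplicities on $[x]$ yields $y<x$ lying in at most $r-2$ of the $F_i$; shifting $x$ to $y$ inside a set containing $x$ but not $y$ produces a strictly smaller counterexample. The well-ordering does the termination bookkeeping for you, which is precisely the step your deficit-tracking cannot supply.
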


Note that $\sum\limits_{1\leq i\leq r} |F_i\cap [s]|\leq rs$ implies $s\geq t$ if such an $s$ exists.  For completeness let us include the proof.

\begin{proof}
First we show that if $\hf$ is $r$-wise $t$-intersecting  then for every $F_1,\ldots,F_r\in \hf$ there exists $s$ such that \eqref{ineq-2.5} holds.
Argue indirectly and suppose $F_1,F_2,\ldots,F_r$ is counter-example with $\sum_{1\leq i\leq r} \sum_{j\in F_i} j$ minimal.

Let $x$ be the $t$-th common vertex of $F_1,\ldots,F_r$. By our assumption,
\begin{align}\label{ineq-2.6}
\sum_{1\leq i\leq  r}|F_i\cap [x]| <(r-1)x+t =rt+(r-1)(x-t).
\end{align}
Note that
\[
|(F_1\cap [x])\cap (F_2\cap [x])\cap \ldots \cap (F_r\cap [x])|=t.
\]
By \eqref{ineq-2.6}, there exists $y<x$ such that $y$ is contained in at most $r-2$ of $F_1\cap [x],F_2\cap [x],\ldots,F_r\cap [x]$. Since $\hf$ is shifted, $F_1':=(F_1\setminus\{x\})\cup \{y\}\in \hf$. Then $F_1',F_2,\ldots,F_r$ is also a counter-example, contradicting the minimality of $\sum_{1\leq i\leq r} \sum_{j\in F_i} j$.

Next we show that if \eqref{ineq-2.6} holds for every $F_1,\ldots,F_r\in \hf$ then $\hf$ is $r$-wise $t$-intersecting. Indeed, suppose that  there exist $F_1,\ldots,F_r\in \hf$ with $|F_1\cap F_2\cap \ldots\cap F_r|<t$. Then for any $s\geq t$ at most $t-1$ elements in $[s]$ are contained in  $r$ of $F_1,F_2,\ldots, F_r$. It follows that
\[
\sum_{1\leq i\leq r} |F_i\cap [s]| \leq r(t-1)+(r-1)(s-t+1) \leq (r-1)s+t-1,
\]
a contradiction. Thus the lemma holds.
\end{proof}

Let $\hf\subset  \binom{[n]}{k}$ be a shifted $r$-wise $t$-intersecting family.  For any $F_1,\ldots,F_r\in \hf$, define $s(F_1,\ldots,F_r)$ to be the minimum $s$ such that
\begin{align*}
\sum_{1\leq i\leq r} |F_i\cap [s]| \geq (r-1) s+t.
\end{align*}
Set $s:=s(F_1,\ldots,F_r)$. Then we must have
\begin{align*}
\sum_{1\leq i\leq r} |F_i\cap [s]| = (r-1) s+t.
\end{align*}
Indeed, if $\sum\limits_{1\leq i\leq r} |F_i\cap [s]| \geq  (r-1) s+t+1$ then
\[
\sum_{1\leq i\leq r} |F_i\cap [s-1]| \geq  (r-1) s+t+1 -r\geq (r-1)(s-1)+t,
\]
contradicting the minimality of $s$. Set $F_1=F_2=\ldots =F_r=F$ for $F\in \hf$, we obtain $r |F\cap [s]| = (r-1) s+t$.
It follows that $\frac{s-t}{r}=:i$ is an integer.  Then $s=t+ri$ and
\[
\frac{(r-1) s+t}{r}=t+\frac{(r-1)(s-t)}{r} =t+(r-1)i.
\]
Thus $|F\cap [t+ri]|\geq t+(r-1)i$ holds and  we get the following corollary.

\begin{cor} [\cite{F87}]\label{cor-2.2}
Let $\hf\subset \binom{[n]}{k}$ be a shifted $r$-wise $t$-intersecting family. Then for every $F\in \hf$, there exists $i\geq 0$ so that $|F\cap [t+ri]|\geq t+(r-1)i$.
\end{cor}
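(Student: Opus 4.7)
The plan is to derive Corollary~\ref{cor-2.2} as an immediate specialization of Lemma~\ref{lem-2.4} by applying it to the $r$-tuple of identical sets $(F,F,\ldots,F)$, and then unpacking the divisibility information encoded in the minimal threshold $s$.

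First I would invoke the forward direction of Lemma~\ref{lem-2.4} with $F_1=F_2=\cdots=F_r=F$, producing some integer $s$ such that $r|F\cap[s]|\geq (r-1)s+t$. Let $s$ be the \emph{smallest} such integer. Note that $r|F\cap[s]|\leq rs$ forces $s\geq t$, so $s$ is well defined and at least $t$.

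Next I would argue that in fact $r|F\cap[s]|=(r-1)s+t$. Otherwise $r|F\cap[s]|\geq (r-1)s+t+1$, and since removing the element $s$ changes the left-hand side by at most $r$, we would get
\[
r|F\cap[s-1]| \geq (r-1)s+t+1-r = (r-1)(s-1)+t,
\]
contradicting the minimality of $s$. So the equality holds, and reading it modulo $r$ gives $(r-1)s+t\equiv 0\pmod r$, i.e.\ $s\equiv t\pmod r$. Writing $s=t+ri$, the bound $s\geq t$ yields $i\geq 0$, and substituting back produces
\[
|F\cap[t+ri]| = \frac{(r-1)(t+ri)+t}{r} = t+(r-1)i,
\]
which is exactly the desired inequality (attained with equality, in fact).

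There is really no obstacle here: the corollary is just the $F_1=\cdots=F_r=F$ instance of Lemma~\ref{lem-2.4} combined with the observation that the minimal witness $s$ automatically lies in the arithmetic progression $\{t,t+r,t+2r,\ldots\}$. The only mild care needed is the one-line minimality argument above, and the verification that $i\geq 0$, both of which are already implicit in the discussion preceding the corollary.
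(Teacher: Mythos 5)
Your proof is correct and follows essentially the same route as the paper: specialize Lemma~\ref{lem-2.4} to $F_1=\cdots=F_r=F$, show the minimal witness $s$ attains equality via the one-step removal argument, and deduce from $r\,|F\cap[s]|=(r-1)s+t$ that $s=t+ri$ with $i\geq 0$. The only cosmetic difference is that the paper establishes the equality at the minimal $s$ for a general $r$-tuple before specializing, whereas you specialize first.
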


In \cite{F78} a bijection between subsets and certain lattice paths was established. For $F\in \binom{[n]}{k}$, define  $P(F)$ to be the lattice path in the two-dimensional integer grid $\mathbb{Z}^2$ starting at origin as follows. In the $i$th step for $i=1,2,\ldots,n$,  from the current point $(x,y)$ the path $P(F)$ goes to $(x,y+1)$ if $i\in F$ and goes to $(x+1,y)$ if $i\notin F$.  Since $|F|=k$, there are exactly $k$ vertical steps. Thus the end point of $P(F)$ is $(n-k,k)$.

Let $\hf\subset \binom{[n]}{k}$ be a shifted $r$-wise $t$-intersecting family. By Corollary \ref{cor-2.2} we infer that $P(F)$ hits $y=(r-1)x+t$ for every $F\in \hf$. For $F\in \hf$, define $i(F)$ to be the minimum integer $i$ such that $|F\cap [t+ri]|=t+(r-1)i$. Define
\[
\hf_i= \left\{F\in \hf\colon  i(F)=i \right\}, i=0,1,2,\ldots, \left\lfloor \frac{k-t}{r-1}\right\rfloor.
\]
By Corollary \ref{cor-2.2}, $\hf_0,\hf_1,\ldots,\hf_{\lfloor \frac{k-t}{r-1}\rfloor}$ form  a partition of $\hf$.

The next lemma gives a universal bound ont the size of an $r$-wise $t$-intersecting family for $n\geq 2k-t$.

\begin{lem}
Let  $\hf\subset \binom{[n]}{k}$ be an $r$-wise $t$-intersecting family with $r\geq 3$ and $n\geq 2k-t$. Then
\begin{align}
|\hf|   \leq \sum_{0\leq i\leq t} \binom{t}{i}\binom{n-t}{k-t-(r-1)i}.\label{ineq-key0}
\end{align}
Moreover,
\begin{align}
&\sum_{i\geq 1} |\hf_i|   \leq \sum_{1\leq i\leq t} \binom{t}{i}\binom{n-t}{k-t-(r-1)i}.\label{ineq-key1}
\end{align}
\end{lem}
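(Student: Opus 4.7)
Shifting preserves both $|\hf|$ and the $r$-wise $t$-intersection property (cf.~\cite{F87}), so we may assume $\hf$ is shifted. Corollary~\ref{cor-2.2} then gives the partition $\hf=\bigsqcup_{i\ge 0}\hf_i$ with $\hf_i=\{F\in\hf:i(F)=i\}$. Every $F\in\hf_0$ satisfies $[t]\subseteq F$, which yields $|\hf_0|\le \binom{n-t}{k-t}$, the $i=0$ summand of~\eqref{ineq-key0}.

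For the remaining mass $\sum_{i\ge 1}|\hf_i|$, it is worth emphasizing that trying to bound each $|\hf_i|$ individually by $\binom{t}{i}\binom{n-t}{k-t-(r-1)i}$ already fails in simple examples (e.g.\ with $\hf=\binom{[t+ri]}{k}$ one can have $|\hf_i|$ strictly larger than that term). The correct stratification is by the \emph{missing pattern} $B(F):=[t]\setminus F$, which is nonempty on $\hf_{\ge 1}$: the shifted lattice-path constraint $x_1\le t$ forces at least one omitted element to lie in $[t]$. Setting $\hf^B=\{F\in\hf:F\cap[t]=[t]\setminus B\}$, the task reduces to the stratified bound
\[
|\hf^B|\le \binom{n-t}{k-t-(r-1)|B|}\qquad (\emptyset\ne B\subseteq[t]),
\]
since summing over $B$ with $|B|=i$ yields the right-hand side of~\eqref{ineq-key1}, and together with the $\hf_0$ estimate also~\eqref{ineq-key0}.

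The main obstacle is the stratified bound. For $b=|B|\ge 1$ and $F\in\hf^B$, pairing $F$ with any $r-1$ members $F_1,\ldots,F_{r-1}\in\hf_0$ (each containing $[t]$) and invoking the $r$-wise $t$-intersection gives $(t-b)+|F\cap\bigcap_j F_j\setminus[t]|\ge t$, so that $F\setminus[t]$ must $b$-cross every $(r-1)$-wise intersection coming from $\hf_0\setminus[t]\subseteq\binom{[t+1,n]}{k-t}$. The hypothesis $n\ge 2k-t$ places us in the range where a cross-intersecting argument (in the spirit of Hilton's inequality~\eqref{ineq-1.7}) combined with the Kruskal-Katona shadow bound (Theorem~\ref{thm-kk}) lets one inject $F\setminus[t]$ into $\binom{[t+1,n]}{k-t-(r-1)b}$ via $rb$ iterated down-shadow steps. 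The delicate point is the case when $\hf_0$ happens to be small or empty: one then has to substitute for the $F_j$'s by compressed representatives coming from $\hf^{B'}$ with $B'\subsetneq B$ and proceed by induction on $|B|$. Orchestrating this case distinction so that the inductive hypothesis covers the ``small $\hf_0$'' regime, while the cross-intersecting/shadow argument covers the ``rich $\hf_0$'' regime, is the principal technical step.
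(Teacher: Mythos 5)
Your setup (shift, split off $\hf_0$ with the bound $\binom{n-t}{k-t}$, and stratify the remainder by the missing pattern $B=[t]\setminus F$ so that it suffices to prove $|\hf^B|\le\binom{n-t}{k-t-(r-1)|B|}$ for each nonempty $B\subseteq[t]$) is exactly the right reduction, and it is in substance what the paper does. But you do not prove the stratified bound, and that bound is the entire content of the lemma. Your sketch — pair $F$ with $(r-1)$ members of $\hf_0$, deduce that $F\setminus[t]$ must meet their common intersection in $\ge b$ elements, then ``inject $F\setminus[t]$ into $\binom{[t+1,n]}{k-t-(r-1)b}$ via $rb$ iterated down-shadow steps'' — is not an argument: no injection is constructed, it is unclear how $rb$ shadow steps would produce the exponent $(r-1)b$ in the binomial coefficient, the cross-intersecting information you extract is vacuous when $\hf_0$ is small or empty, and you yourself flag the orchestration of the resulting case distinction as an unresolved ``principal technical step.'' As written, this is a genuine gap, not a proof.

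The paper's actual argument is much shorter and uses only Corollary \ref{cor-2.2} plus elementary lattice-path counting: since $\hf$ is shifted and $r$-wise $t$-intersecting, every path $P(F)$ hits the steep line $y=(r-1)x+t$. A path avoiding $(0,t)$ crosses the anti-diagonal $x+y=t$ at some $(i,t-i)$ with $1\le i\le t$ (there are $\binom{t}{i}$ initial segments, i.e.\ choices of $B$ with $|B|=i$), and — this is the one idea your proposal is missing — because $r\ge 3$ such a path, starting below the $45$-degree line $y=x+(r-2)i+t$ and needing to reach the steeper line, must touch that $45$-degree line. The reflection principle across $y=x+(r-2)i+t$ then bounds the number of continuations from $(i,t-i)$ to $(n-k,k)$ by the number of all paths from $(-(r-1)i,\,(r-1)i+t)$ to $(n-k,k)$, which is precisely $\binom{n-t}{k-t-(r-1)i}$. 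No Kruskal--Katona, no cross-intersecting machinery, and no dependence on $\hf_0$ being nonempty is needed. I would encourage you to replace your sketch by this reflection argument; alternatively, if you want to salvage your route, you must actually construct the claimed injection and handle the empty-$\hf_0$ case, neither of which is currently done.
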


 \begin{figure}[t]
  \centering
  \includegraphics[width=0.45\textwidth]{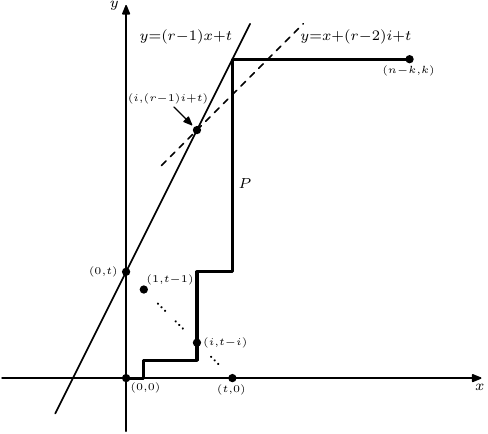}
  \caption{The lattice path $P$ goes through $(i,t-i)$ and hits the line $y=(r-1)x+t$.}\label{latticepath}
\end{figure}

\begin{proof}
Without loss of generality, we may assume that $\hf$ is shifted.
 For each $F\in \hf$, by Corollary \ref{cor-2.2} we infer that $P(F)$ hits the line $y=(r-1)x+t$. Note that the number of lattice paths that go through $(0,t)$ is exactly $\binom{n-t}{k-t}$. Let us count the number of lattice paths $P$ that do not pass $(0,t)$. Then $P$ has to go through exactly one of $(1,t-1),(2,t-2),\ldots,(t,0)$.  Since $r\geq 3$, the paths that start at $(i,t-i)$ and hit the line $y=(r-1)x+t$ have to  hit the line $y=x+(r-2)i+t$ (as shown in Figure \ref{latticepath}). Note that the number of lattice paths from $(0,0)$ to $(i,t-i)$ is $\binom{t}{i}$. By the reflection principle (cf. e.g. \cite{F78}), the number of paths from $(i,t-i)$ to $(n-k,k)$ hitting $y=x+(r-2)i+t$ equals the the number of paths from $(-(r-1)i,(r-1)i+t)$ to $(n-k,k)$, which is
  $\binom{n-t}{k-t-(r-1)i}$. Thus,
 \[
\sum_{i\geq 1} |\hf_i|  \leq \sum_{1\leq i\leq t} \binom{t}{i}\binom{n-t}{k-t-(r-1)i}.
 \]
Since $|\hf_0|\leq \binom{n-t}{k-t}$, \eqref{ineq-key0} follows.
\end{proof}

\begin{fact}\label{fact-3.1}
Suppose $\hf\subset 2^{[n]}$ is $r$-wise $t$-intersecting but $\hf$ is not a $t$-star. Then for $2\leq s<r$, $\hf$ is $s$-wise $(t+r-s)$-intersecting.
\end{fact}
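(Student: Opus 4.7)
The plan is to argue by contradiction, using a greedy enlargement of a would-be counterexample.

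Suppose, toward contradiction, there exist $F_1,\ldots,F_s\in\hf$ with $|F_1\cap\cdots\cap F_s|\leq t+r-s-1$. Write $I_j:=F_1\cap\cdots\cap F_j$; we wish to build $F_{s+1},\ldots,F_r\in\hf$ (with repetitions allowed, as in the paper's definition of $r$-wise $t$-intersecting) so that $|I_r|<t$, which will contradict the $r$-wise $t$-intersecting property. The starting budget is $|I_s|\leq t+r-s-1$, so we only need to drop the size of the running intersection by at most $r-s$ over $r-s$ new sets, i.e.\ by at least one per step.

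The key observation is that, because $\hf$ is not a $t$-star, the common kernel satisfies
\[
\Bigl|\bigcap_{F\in\hf}F\Bigr|<t.
\]
Consequently, whenever $|I_j|\geq t$ we have $I_j\not\subset\bigcap_{F\in\hf}F$, so we can pick $x\in I_j\setminus\bigcap_{F\in\hf}F$ and choose $F_{j+1}\in\hf$ with $x\notin F_{j+1}$, giving $|I_{j+1}|\leq|I_j|-1$. I would iterate this greedy choice. There are two cases to run out: either the intersection has already dropped below $t$ before reaching $j=r$ (in which case I pad the remaining slots with $F_1$, which leaves $I_r=I_j$ and yields $|I_r|<t$); or the intersection stays $\geq t$ throughout the first $r-s-1$ additions, in which case after the $(r-s)$-th step one obtains
\[
|I_r|\leq|I_s|-(r-s)\leq (t+r-s-1)-(r-s)=t-1,
\]
again a contradiction.

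The argument is essentially mechanical once the right invariant—that $|\cap\hf|\leq t-1$ is exactly the obstruction to the intersection being "trapped" above $t$—is identified. The only subtle point is that one must allow the multiset $\{F_1,\ldots,F_r\}$ to repeat members in the padding phase, which is legitimate under the stated definition. No shifting, no averaging, and no extremal hypotheses are required; the proof is purely set-theoretic and uses only the hypothesis $|\cap\hf|<t$ together with the freedom to repeat sets.
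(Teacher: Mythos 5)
Your proof is correct and is essentially the paper's own argument: both proceed by successively choosing $F_{s+1},\ldots,F_r$ so that each new set strictly shrinks the running intersection, which is possible precisely because $\bigl|\bigcap_{F\in\hf}F\bigr|<t$. The only cosmetic difference is that the paper first passes to $\hf(Y)$ with $Y=\bigcap_{F\in\hf}F$ and invokes non-triviality of $\hf(Y)$ to supply the avoidable element, whereas you obtain it directly from $|I_j|\geq t>\bigl|\bigcap_{F\in\hf}F\bigr|$; both versions are sound, and your explicit handling of the padding-by-repetition step is a legitimate use of the stated definition.
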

\begin{proof}
Set $Y=\cap \{F\colon F\in \hf\}$. Then $|Y|<t$ and by definition $\hf(Y)$ is $r$-wise $(t-|Y|)$-intersecting and non-trivial. We need to show that $\hf(Y)$ is $s$-wise $(t-|Y|+r-s)$-intersecting. Suppose the contrary and fix $G_1,G_2,\ldots,G_s\in \hf(Y)$ satisfy $|G_1\cap \ldots \cap G_s|<t-|Y|+r-s$.

Using non-triviality we may choose successively $G_{s+1},\ldots,G_r$ to satisfy $|G_1\cap \ldots \cap G_s\cap G_{s+1}\cap \ldots\cap G_r|<t-|Y|$, i.e., $|(G_1\cup Y)\cap \ldots\cap (G_r\cap Y)|<t$, a contradiction.
\end{proof}

\begin{cor}
Let $\hf\subset \binom{[n]}{k}$ be an $r$-wise $t$-intersecting family with $r\geq 3$. If $\hf$ is not a $t$-star, then
\begin{align}\label{ineq-key2}
|\hf|   \leq \sum_{0\leq i\leq t} \binom{t}{i}\binom{n-t}{k-t-(r-1)i} - \binom{n-t-1}{k-t}.
\end{align}
\end{cor}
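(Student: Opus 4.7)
The plan is to strengthen the bound $|\hf_0|\le\binom{n-t}{k-t}$ from the preceding lemma to the sharper bound $|\hf_0|\le\binom{n-t-1}{k-t-1}$, using the non-$t$-star hypothesis. Since $\binom{n-t}{k-t}-\binom{n-t-1}{k-t-1}=\binom{n-t-1}{k-t}$, this improvement is exactly what is needed to produce the extra subtractive term $-\binom{n-t-1}{k-t}$ in \eqref{ineq-key2} once combined with \eqref{ineq-key1}.

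First I invoke Fact \ref{fact-3.1} with $s=2$ on the original $\hf$: since $r\ge 3$ and $\hf$ is not a $t$-star, $\hf$ is pairwise $(t+r-2)$-intersecting, and in particular pairwise $(t+1)$-intersecting. Crucially, being pairwise $(t+1)$-intersecting is preserved under arbitrary applications of the shifting operator, so I may now pass to a shifted copy of $\hf$ while retaining this stronger condition. Routing through Fact \ref{fact-3.1} is the core of the argument, because shifting can in principle turn a non-$t$-star into a $t$-star; the hypothesis $r\ge 3$ enters exactly here, since for $r=2$ one would only retain pairwise $t$-intersecting, which is not enough for the next step.

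Next, set $\hf_0=\{F\in\hf:[t]\subset F\}$ in the shifted family. For any $F_1,F_2\in\hf_0$, both contain $[t]$, and the pairwise $(t+1)$-intersecting condition forces $(F_1\setminus[t])\cap(F_2\setminus[t])\ne\emptyset$. Hence $\{F\setminus[t]:F\in\hf_0\}$ is an intersecting family of $(k-t)$-subsets of the $(n-t)$-element ground set $[n]\setminus[t]$. The hypothesis $n\ge 2k-t$ inherited from the preceding lemma is precisely $n-t\ge 2(k-t)$, so the classical Erd\H{o}s-Ko-Rado theorem (the $t=1$ case of Theorem \ref{thm-ekr}) yields $|\hf_0|\le\binom{n-t-1}{k-t-1}$.

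Combining this with \eqref{ineq-key1} on $|\hf_{\ge 1}|$ and applying the identity $\binom{n-t-1}{k-t-1}=\binom{n-t}{k-t}-\binom{n-t-1}{k-t}$ produces exactly \eqref{ineq-key2}. The only delicate point is the shift-invariance issue noted above: the non-$t$-star property itself need not survive shifting, so the argument must first extract the shift-stable pairwise $(t+1)$-intersecting condition via Fact \ref{fact-3.1}, and only then pass to the shifted family and apply EKR to the $(k-t)$-uniform residue.
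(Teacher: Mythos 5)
Your proof is correct and follows essentially the same route as the paper: both deduce a stronger intersection property from Fact \ref{fact-3.1} and use it to improve the bound on $|\hf([t])|$ from $\binom{n-t}{k-t}$ to $\binom{n-t-1}{k-t-1}$, which accounts exactly for the subtracted term. The only cosmetic difference is that the paper takes $s=r-1$ in Fact \ref{fact-3.1} and applies \eqref{ineq-1.6} to the resulting $(r-1)$-wise intersecting family $\hf([t])$, whereas you take $s=2$ and apply the Erd\H{o}s--Ko--Rado theorem; both give the same bound under $n\geq 2k-t$, and your explicit remark about extracting the shift-stable intersection property before shifting is a point the paper leaves implicit.
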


\begin{proof}
In the proof of \eqref{ineq-key0} we counted $\binom{n-t}{k-t}$ for the paths through $(0,t)$. Since $\hf$ is not a $t$-star, by Fact \ref{fact-3.1} we infer that  $\hf$ is $(r-1)$-wise $(t+1)$-intersecting. It follows that $\hf([t])$ is $(r-1)$-wise intersecting. By \eqref{ineq-1.6} we have $|\hf([t])| \leq \binom{n-t-1}{k-t-1}$. Now $\binom{n-t}{k-t}-\binom{n-t-1}{k-t-1} =\binom{n-t-1}{k-t}$ proves \eqref{ineq-key2}.
\end{proof}

\section{Proof of Theorem \ref{thm:main-1}}

\begin{proof}[Proof of Theorem \ref{thm:main-1}]
Let $\hf\subset \binom{[n]}{k}$ be an $r$-wise $t$-intersecting family with $n\geq (ct)^{\frac{1}{r-1}}(k-t)+k$ ($c\geq 1$ to be specified later). Without loss of generality, assume that $\hf$ is shifted and is not a $t$-star. By Fact \ref{fact-3.1}, $\hf$ is $(r-1)$-wise $(t+1)$-intersecting and  $(r-2)$-wise $(t+2)$-intersecting. It follows that
  $\hf([t])$ is $(r-1)$-wise intersecting and  $(r-2)$-wise 2-intersecting.

Since $n\geq (ct)^{\frac{1}{r-1}}(k-t)+k>2k-t$, $n-t> 2(k-t)$ follows. By \eqref{ineq-1.6} we have
\begin{align}\label{ineq-3.1}
|\hf_0|=|\hf([t])| \leq \binom{n-t-1}{k-t-1}= \frac{k-t}{n-t} \binom{n-t}{k-t} < \frac{1}{2} \binom{n-t}{k-t}.
\end{align}
If $r\geq 5$, then $\hf([t])$ is 3-wise 2-intersecting. By Theorem \ref{thm:main-2},
\begin{align}\label{ineq-3.1-2}
|\hf_0|=|\hf([t])| \leq \binom{n-t-2}{k-t-2}= \frac{(k-t)(k-t-1)}{(n-t)(n-t-1)} \binom{n-t}{k-t} < \frac{1}{4} \binom{n-t}{k-t}.
\end{align}

Using \eqref{ineq-key1} we have
\begin{align*}
|\hf| \leq   |\hf_0|+\sum_{1\leq i\leq t} \binom{t}{i}\binom{n-t}{k-t-(r-1)i}.
\end{align*}
Note that if $k-t-(r-1)i<0$ then $\binom{n-t}{k-t-(r-1)i}=0$.
Let
\[
f(n,k,r,t,i) := \binom{t}{i}\binom{n-t}{k-t-(r-1)i}.
\]
Then for $1\leq i\leq t-1$ and $n-k\geq (ct)^{\frac{1}{r-1}}(k-t)$,
\begin{align*}
\frac{f(n,k,r,t,i+1)}{f(n,k,r,t,i)}&\leq \frac{t-i}{i+1} \cdot \left(\frac{k-t-(r-1)i}{n-k+(r-1)(i+1)}\right)^{r-1}\\[3pt]
&< \frac{t}{i+1} \cdot \left(\frac{k-t}{n-k}\right)^{r-1}\\[3pt]
&\leq \frac{1}{c(i+1)}.
\end{align*}
It follows that for $i> 1$,
\[
f(n,k,r,t,i) < \frac{1}{ci}f(n,k,r,t,i-1) < \frac{1}{c^{i-1}i!}f(n,k,r,t,1).
\]
By $\sum\limits_{1\leq i\leq t}\frac{1}{c^{i}i!} <e^{1/c}-1$,
\[
\sum_{1\leq i\leq t} f(n,k,r,t,i) \leq  f(n,k,r,t,1)\sum_{1\leq i\leq t}\frac{1}{c^{i-1}i!}< t\binom{n-t}{k-t-r+1} c(e^{1/c}-1).
\]
Note that $n-k\geq (ct)^{\frac{1}{r-1}}(k-t)$ implies
\[
\frac{\binom{n-t}{k-t-r+1}}{\binom{n-t}{k-t}} < \left(\frac{k-t}{n-k}\right)^{r-1}\leq  \frac{1}{ct}.
\]
It follows that
\begin{align}\label{ineq-3.2}
\sum_{1\leq i\leq t} \binom{t}{i}\binom{n-t}{k-t-(r-1)i} =\sum_{1\leq i\leq t} f(n,k,r,t,i) <(e^{1/c}-1)\binom{n-t}{k-t}.
\end{align}
Note that $e^{1/2.5}-1<\frac{1}{2}$ and $e^{1/2}-1<\frac{3}{4}$. For $r=3,4$ and  $c=2.5$, adding \eqref{ineq-3.1} and \eqref{ineq-3.2} we get
\[
|\hf|<\frac{1}{2} \binom{n-t}{k-t}+\sum_{1\leq i\leq t} \binom{t}{i}\binom{n-t}{k-t-(r-1)i}<\frac{1}{2} \binom{n-t}{k-t}+\frac{1}{2} \binom{n-t}{k-t}=\binom{n-t}{k-t}.
\]
For $r\geq 5$ and $c=2$, adding \eqref{ineq-3.1-2} and \eqref{ineq-3.2}  we conclude that
\[
|\hf|<\frac{1}{4} \binom{n-t}{k-t}+\sum_{1\leq i\leq t} \binom{t}{i}\binom{n-t}{k-t-(r-1)i}<\frac{1}{4} \binom{n-t}{k-t}+\frac{3}{4} \binom{n-t}{k-t}=\binom{n-t}{k-t}.
\]
\end{proof}

\section{The probability of hitting the line, uniform vs non-uniform}

We need the following version of the Chernoff bound for the binomial distribution.

\begin{thm}[\cite{janson2011random}]
Let $X\in Bi(n,p)$ and $\lambda=np$. Then
\begin{align}\label{chernoff-small}
Pr(X< \lambda-a) \leq e^{-\frac{a^2}{2\lambda}}.
\end{align}
\end{thm}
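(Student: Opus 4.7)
The plan is to establish this lower-tail Chernoff bound for the binomial by the standard exponential-moment (Markov) method. The argument proceeds in three steps: Markov's inequality applied to an exponential of $X$, a product-form bound on the moment generating function, and a short calculus step to convert the sharp bound into the stated Gaussian-type form.

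For the first step, fix any parameter $s>0$ and apply Markov's inequality to the nonnegative random variable $e^{-sX}$:
\begin{align*}
Pr(X<\lambda-a) \;=\; Pr\bigl(e^{-sX}>e^{-s(\lambda-a)}\bigr) \;\leq\; e^{s(\lambda-a)}\,\mathbb{E}\bigl[e^{-sX}\bigr].
\end{align*}
Since $X=X_1+\cdots+X_n$ is a sum of i.i.d.\ Bernoulli$(p)$ variables, the moment generating function factorizes, and using the elementary inequality $1+u\leq e^u$ gives
\begin{align*}
\mathbb{E}\bigl[e^{-sX}\bigr] \;=\; \bigl(1+p(e^{-s}-1)\bigr)^n \;\leq\; e^{np(e^{-s}-1)} \;=\; e^{\lambda(e^{-s}-1)}.
\end{align*}
Combining the two displays, $Pr(X<\lambda-a) \leq \exp\bigl(\lambda(e^{-s}-1)+s(\lambda-a)\bigr)$ for every $s>0$. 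Next I would optimize over $s>0$: assuming $0<a<\lambda$ (the case $a\geq\lambda$ is trivial, since then $Pr(X<\lambda-a)\leq Pr(X<0)=0$), differentiation in $s$ shows that the minimum is attained at $s^{*}=-\ln(1-a/\lambda)>0$, yielding the sharp bound
\begin{align*}
Pr(X<\lambda-a) \;\leq\; \exp\bigl(-a-(\lambda-a)\ln(1-a/\lambda)\bigr).
\end{align*}

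The remaining (and really the only delicate) step is to weaken this sharp bound to the cleaner Gaussian form $e^{-a^{2}/(2\lambda)}$. Setting $x=a/\lambda\in[0,1)$, after dividing the exponent by $\lambda$ this amounts to the one-variable analytic inequality
\begin{align*}
h(x) \;:=\; x + (1-x)\ln(1-x) - \frac{x^{2}}{2} \;\geq\; 0 \quad\text{for } x\in[0,1),
\end{align*}
which I would verify by noting that $h(0)=0$ together with
\begin{align*}
h'(x) \;=\; -\ln(1-x)-x \;=\; \sum_{k\geq 2}\frac{x^{k}}{k} \;\geq\; 0.
\end{align*}
The only obstacle worth naming is this last analytic comparison; the rest is the textbook Chernoff recipe, and no $r$-wise intersection machinery from the preceding sections is needed.
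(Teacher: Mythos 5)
Your proof is correct. The paper offers no proof of this statement at all — it is quoted as a known auxiliary result from the cited textbook of Janson, \L uczak and Ruci\'nski — and your argument is exactly the standard exponential-moment derivation given there: Markov's inequality applied to $e^{-sX}$, the bound $1+u\leq e^u$ on the factorized moment generating function, optimization at $s^*=-\ln(1-a/\lambda)$, and the elementary comparison $x+(1-x)\ln(1-x)\geq x^2/2$ on $[0,1)$, all of which check out.
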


We call $P(n)$ a {\it $p$-random walk of length $n$} if it starts at origin and goes up a unit with probability $p$ and goes right a unit with probability $1-p$ at each step.  Let $f(n,r,t,p)$ be the probability that a $p$-random walk $P(n)$ hits the line $y=(r-1)x+t$.  Set $f(r,t,p) =\lim\limits_{n\rightarrow \infty} f(n,r,t,p)$. That is, $f(r,t,p)$ is the probability that an infinite $p$-random walk hits the line $y=(r-1)x+t$.

\begin{lem}[\cite{F87},\cite{F91}]\label{lem-key1}
\begin{itemize}
  \item[(i)] $f(n,r,t,p)\leq f(n+1,r,t,p)$.
  \item[(ii)] $f(n+1,r,t,p) = p f(n,r,t-1,p) + (1-p) f(n,r,t+r-1,p)$.
  \item[(iii)]
  \[
      f(r,t,p)=\gamma^t,
  \]
where $\gamma$ is the unique root of  $x = p + (1-p) x^{r}$ in the open interval $(0,1)$.
  \item[(iv)] Let $\alpha_r$ be the unique root of  $x = \frac{1}{2}+ \frac{1}{2}x^{r}$. Then
  \[
      \alpha_3=\frac{\sqrt{5}-1}{2},\ \frac{1}{2}<\alpha_r< \frac{1}{2}+\frac{1}{2^r} \mbox{ for } r\geq 4.
  \]
  Moreover,
  \begin{align}\label{ineq-frankl}
  \frac{1}{2^r-r}<\alpha_r^r\leq \frac{1}{2^r-r-1} \mbox{ for } r\geq 3.
  \end{align}
\end{itemize}
\end{lem}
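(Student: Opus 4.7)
The plan is to dispatch parts (i)--(iv) in order, with (i) and (ii) serving as preparatory observations that power (iii), and (iv) following from a careful analysis of the defining equation for $\alpha_r$. For (i), I would couple a $p$-random walk of length $n+1$ with one of length $n$ by letting them share their first $n$ steps; the event that the $n$-step walk has already hit $y=(r-1)x+t$ is a subevent of the same event for the $(n+1)$-step walk, so the inequality is just monotonicity of probability under event inclusion. For (ii), I would condition on the first step: with probability $p$ the walk moves to $(0,1)$, and shifting the origin there reduces the remaining question to an $n$-step walk hitting $y=(r-1)x+(t-1)$; with probability $1-p$ it moves to $(1,0)$, and the shifted target line becomes $y=(r-1)x+(t+r-1)$. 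Summing the two contributions yields the stated recursion.

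For (iii), monotonicity from (i) guarantees that $h(t):=\lim_{n\to\infty}f(n,r,t,p)$ exists, and passing to the limit in (ii) shows that $h$ satisfies
\[
h(t)=p\,h(t-1)+(1-p)\,h(t+r-1)\quad (t\ge 1),\qquad h(0)=1.
\]
The cleanest route to a closed form is to recast the two-dimensional walk as a one-dimensional walk on $\mathbb{Z}$ with decrement $1$ (probability $p$) and increment $r-1$ (probability $1-p$), started at $t$: hitting the line $y=(r-1)x+t$ is equivalent to this walk ever reaching $0$. Because the only decrement is by $1$, the first passage from $t$ to $0$ must visit each of $t-1,t-2,\ldots,1$ in order, and the strong Markov property together with translation invariance makes each first passage from $k$ to $k-1$ an independent event of probability $h(1)$. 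Hence $h(t)=h(1)^t$, and setting $\gamma:=h(1)$ and substituting into the recursion at $t=1$ gives $\gamma=p+(1-p)\gamma^r$. Uniqueness of the root in $(0,1)$ comes from the strict convexity of $\phi(x):=p+(1-p)x^r-x$ on $[0,1]$, combined with $\phi(0)=p>0$, $\phi(1)=0$, and $\phi'(1)=r(1-p)-1>0$ in the non-degenerate regime; a short induction on $n$ showing $f(n,r,t,p)\le \gamma_0^t$ for the smaller root $\gamma_0$ then pins down $\gamma=\gamma_0$ rather than $\gamma=1$.

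For (iv), setting $p=1/2$ in (iii) gives the defining identity $\alpha_r^r=2\alpha_r-1$. For $r=3$ the equation $x^3-2x+1=0$ factors as $(x-1)(x^2+x-1)$, and the only root in $(0,1)$ is $(\sqrt{5}-1)/2$. For $r\ge 4$, $\phi(1/2)=1/2^{r+1}>0$ immediately gives $\alpha_r>1/2$, while the upper bound $\alpha_r<1/2+1/2^r$ follows from $\phi(1/2+1/2^r)<0$, which reduces to $(1+1/2^{r-1})^r<2$, and in turn to $r/2^{r-1}\le \ln 2$, clear for $r\ge 4$. For \eqref{ineq-frankl}, set $c:=\alpha_r^r$; then $\alpha_r=(1+c)/2$ and $(1+c)^r=2^r c$, which on expansion yields
\[
1=(2^r-r)c-\sum_{i=2}^{r}\binom{r}{i}c^i.
\]
Positivity of the tail gives $c>1/(2^r-r)$ at once. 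The upper bound $c\le 1/(2^r-r-1)$ rearranges to $\sum_{i=2}^{r}\binom{r}{i}c^{i-1}\le 1$, which I would verify using $c<1/2^{r-1}$: the dominant $i=2$ term contributes at most $r(r-1)/2^r$ and the remaining terms form a rapidly decaying tail. The base case $r=3$ is a direct computation giving $3c+c^2=3-\sqrt{5}<1$.

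The main obstacle is the first-passage argument in (iii): the identification $h(t)=h(1)^t$ is slick, but it requires the one-dimensional reformulation and careful use of the strong Markov property in the discrete setting. Moreover, pinning down the correct root $\gamma_0\in(0,1)$ (as opposed to $\gamma=1$, which also satisfies the algebraic equation) forces one to separate the drift regimes $r(1-p)>1$ (where $\gamma_0<1$) and $r(1-p)\le 1$ (where the walk hits the line almost surely and $\gamma=1$), supplementing the recursion with the auxiliary induction bound $f(n,r,t,p)\le \gamma_0^t$.
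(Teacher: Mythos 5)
The paper does not actually prove this lemma; it is quoted from \cite{F87} and \cite{F91}, so there is no in-paper argument to compare against. Your proof is correct and follows what is essentially the standard route in those references: (i) by coupling/event inclusion, (ii) by conditioning on the first step and translating the target line, (iii) by passing to the limit in the recursion and using the one-dimensional reformulation (position $(r-1)x+t-y$, steps $-1$ with probability $p$ and $+(r-1)$ with probability $1-p$) so that the strong Markov property and the unit decrement give $h(t)=h(1)^t$, with the auxiliary induction $f(n,r,t,p)\le\gamma_0^t$ correctly ruling out the spurious root $\gamma=1$; and (iv) by elementary analysis of $\phi(x)=\tfrac12+\tfrac12 x^r-x$. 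Your derivation of \eqref{ineq-frankl} via $c=\alpha_r^r$, $\alpha_r=(1+c)/2$, $(1+c)^r=2^rc$ is clean; the only step left as a sketch is the tail estimate $\sum_{i=2}^{r}\binom{r}{i}c^{i-1}<1$ for $r\ge4$, but with $c<2^{1-r}$ the consecutive-term ratios are at most $r2^{1-r}\le\tfrac12$ and the $i=2$ and $i=3$ terms already sum below $0.82$, so the verification does go through. No gaps of substance.
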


Let us define another type of random walk. We call $Q(n,i)$ a {\it uniform random walk} if it is chosen uniformly from all lattice paths  from $(0,0)$ to $(n-i,i)$. Let $g(n,i,r,t)$ be  the probability that a uniform random walk $Q(n,i)$ hits the line $y=(r-1)x+t$.

\begin{prop}\label{prop-key}
\begin{itemize}
  \item[(i)] $g(n,i,r,t)\leq g(n,i+1,r,t)$.
  \item[(ii)] $g(n+1,k,r,t)\leq g(n,k,r,t)$.
  \item[(iii)] For $r\geq 3$ and $t\geq 2$, $g(2k,k,r,t)\leq g(2k+2,k+1,r,t)$.
  \item[(iv)] $\lim\limits_{k\rightarrow \infty} g(2k,k,r,t)\leq f(r,t,\frac{1}{2})$.
\end{itemize}
\end{prop}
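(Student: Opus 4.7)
\textbf{Proof plan for Proposition \ref{prop-key}.} The plan is to prove the four monotonicity claims in order: (i) and (ii) by direct double-counting of pairs (subset, marked vertex); (iii) by an end-of-path decomposition that reduces the claim to a midpoint-convexity inequality; and (iv) by combining a Chernoff-type tail bound with a hypergeometric-to-binomial coupling.

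For (i), identify each walk in $Q(n,i)$ with the $i$-subset $F \subset [n]$ marking its up-step positions, and let $N(n,i)$ be the number of such $F$ whose path $P(F)$ hits the line $y = (r-1)x + t$. Map each pair $(F, j)$ with $F$ hitting, $|F| = i$, and $j \in [n] \setminus F$ to $(F \cup \{j\}, j)$; since inserting an up-step only raises the trajectory, $F \cup \{j\}$ is still hitting, and the map is injective. Thus $(n-i)\, N(n,i) \leq (i+1)\, N(n, i+1)$, i.e., $g(n, i, r, t) \leq g(n, i+1, r, t)$. For (ii), dually, map each $(F, j)$ with $F \subset [n+1]$ a hitting $k$-subset and $j \in [n+1] \setminus F$ to $(\pi_j(F), j)$, where $\pi_j(F) \subset [n]$ is obtained by deleting position $j$ (a right-step) and shifting elements of $F$ above $j$ down by one. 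The walk $P(\pi_j(F))$ still hits: if the first hitting time of $P(F)$ precedes $j$ its initial segment is unchanged; if it equals some $s \geq j$, then $P(\pi_j(F))$ at its time $s-1$ sits at $(R_s - 1, U_s)$, strictly above the line, so lattice-path continuity forces an earlier crossing. Injectivity is immediate (reinsert a right-step at position $j$), giving $(n+1-k)\, N(n+1, k) \leq (n+1)\, N(n, k)$, i.e., $g(n+1, k, r, t) \leq g(n, k, r, t)$.

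For (iii), because $r \geq 3$ and $t \geq 2$ prevent each of the points $(k, k), (k, k+1), (k+1, k), (k+1, k+1)$ from lying on the line, every hitting walk in $\mathcal{P}_{2k+2, k+1}$ must hit strictly within its first $2k$ steps. Partitioning by the final two steps (UU, UR, RU, RR) therefore yields
\begin{align*}
N(2k+2, k+1) = N(2k, k-1) + 2\, N(2k, k) + N(2k, k+1),
\end{align*}
and combined with the Vandermonde-type identity $\binom{2k+2}{k+1} = \binom{2k}{k-1} + 2 \binom{2k}{k} + \binom{2k}{k+1}$, a routine simplification gives
\begin{align*}
g(2k+2, k+1, r, t) - g(2k, k, r, t) = \frac{k}{2(2k+1)} \bigl[ g(2k, k+1, r, t) - 2\, g(2k, k, r, t) + g(2k, k-1, r, t) \bigr].
\end{align*}
Thus (iii) is equivalent to the midpoint convexity $g(2k, k-1, r, t) + g(2k, k+1, r, t) \geq 2\, g(2k, k, r, t)$, which after clearing denominators becomes the combinatorial inequality $(k+1)[N(2k, k+1) + N(2k, k-1)] \geq 2 k\, N(2k, k)$. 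I will establish this by a refined double-counting extending (i): for each critical pair $(G, y)$ with $G$ a hitting $k$-set, $y \in G$, and $G \setminus y$ non-hitting, choose a minimal $z \in [2k] \setminus G$ with $(G \setminus y) \cup z$ still non-hitting, and map $(G, y) \mapsto (G \cup z, y)$; this gives an injection into the analogous critical pairs at size $k+1$, yielding the inequality.

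For (iv), (iii) implies that $g(2k, k, r, t)$ is non-decreasing in $k$, so its limit $L$ exists. To prove $L \leq f(r, t, 1/2)$, fix $\varepsilon > 0$ and a threshold $S$. Since the line has slope $r - 1 \geq 2$, hitting at time $j$ requires the number $X_j$ of up-steps of $Q(2k, k)$ in its first $j$ steps to satisfy $X_j \geq ((r-1)j + t)/r$, and Hoeffding's inequality for sampling without replacement gives
\begin{align*}
\Pr\!\bigl[X_j \geq ((r-1)j + t)/r\bigr] \leq \exp\!\Bigl(-\frac{j(r-2)^2}{2 r^2}\Bigr);
\end{align*}
summing over $j > S$ makes the probability that $Q(2k, k)$ first hits after time $S$ less than $\varepsilon$, uniformly in $k$, once $S$ is large enough. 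Moreover the first $S$ steps of $Q(2k, k)$ form a hypergeometric sample that couples to $S$ i.i.d.\ Bernoulli$(1/2)$ trials with total-variation error $O(S^2/k)$, so the probability of a hit within the first $S$ steps is at most $f(S, r, t, 1/2) + o_k(1) \leq f(r, t, 1/2) + o_k(1)$. Adding the two estimates and letting $k \to \infty$ then $\varepsilon \to 0$ yields (iv). The principal obstacle is the midpoint-convexity step in (iii): while the algebraic reduction is clean, the combinatorial inequality requires carefully handling the boundary case where every up-extension of a non-hitting $(k-1)$-set is itself hitting.
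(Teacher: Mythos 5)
Parts (i), (ii) and (iv) of your proposal are sound. For (i) your double-counting injection is exactly the normalized matching property, i.e.\ a proof of Sperner's inequality \eqref{ineq-sperner}, which the paper simply cites; for (ii) the paper instead splits the paths to $(n+1-k,k)$ according to whether they pass through $(n-k,k)$ or $(n-k+1,k-1)$ and then applies (i), but your deletion injection is a valid alternative; for (iv) your truncation-plus-coupling argument (uniform tail bound for the first hitting time via Hoeffding for sampling without replacement, then a hypergeometric-to-Bernoulli coupling on the first $S$ steps) is a legitimate substitute for the paper's argument, which instead compares $g(2k,k,r,t)$ with $f(2k,r,t,p)$ for the tilted parameter $p=\tfrac12+\sqrt{\log k/k}$ and applies the Chernoff bound \eqref{chernoff-small}. (Your tail bound degenerates for $r=2$, but that case is vacuous since $f(2,t,\tfrac12)=1$.)

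The genuine gap is in (iii). Your algebraic reduction is correct: $g(2k+2,k+1,r,t)\geq g(2k,k,r,t)$ is \emph{equivalent} to the midpoint convexity $g(2k,k-1,r,t)+g(2k,k+1,r,t)\geq 2g(2k,k,r,t)$, so you have only restated the claim, not proved it. That inequality is \emph{not} a formal consequence of the hitting family being an up-set: for the threshold up-set $\{F\colon |F|\geq k\}$ one has $(k+1)\bigl[N(2k,k+1)+N(2k,k-1)\bigr]=k\binom{2k}{k}<2k\binom{2k}{k}$, so any proof must exploit the line-hitting structure, and your sketched injection does not do so. Concretely, the map $(G,y)\mapsto(G\cup z,y)$ is neither shown to be well defined (a suitable $z$ may fail to exist, as you acknowledge) nor injective (from $(G\cup z,y)$ one cannot in general identify which element played the role of $z$), and nowhere in your argument do the hypotheses $r\geq3$, $t\geq2$ enter except in the trivial observation that the terminal points are off the line, which already holds for all $r\geq2$, $t\geq1$. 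The paper's proof of (iii) proceeds quite differently: it decomposes by the \emph{first} hitting point, writing $g(2k,k,r,t)=\sum_i\ell(t,i)c_r(k,t,i)$ with $c_r(k,t,i)=\binom{2k-ri-t}{k-(r-1)i-t}/\binom{2k}{k}$ and $\ell(t,i)$ independent of $k$, and then verifies $c_r(k+1,t,i)>c_r(k,t,i)$ by an explicit polynomial positivity computation in which the assumptions $r\geq3$ and $t\geq2$ are exactly what make the leading coefficient $t(t-1)+2i(r-2)t+i(i(r-2)^2-r)$ positive. You should either adopt that termwise decomposition or supply a complete proof of the convexity inequality; as written, (iii) is unproven, and since (iv) relies on (iii) for the existence of the limit, that dependence should also be made explicit or replaced by $\limsup$.
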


\begin{proof}
First we prove (i).  Let $\hg_i\subset \binom{[n]}{i}$ be the collection of all $i$-sets $F$ such that $P(F)$ hits the line $y=(r-1)x+t$. Let $E\in \binom{[n]}{i}$. 
If  $P(E)$ hits  $y=(r-1)x+t$ then so does $P(F)$ for every $F$ with $E\subset F$. Thus $\partial^+ \hg_i  \subset \hg_{i+1}$. Note that $g(n,i,r,t) = \frac{|\hg_i|}{\binom{n}{i}}$. By \eqref{ineq-sperner}, we conclude that
\[
g(n,i+1,r,t) = \frac{|\hg_{i+1}|}{\binom{n}{i+1}}\geq  \frac{|\partial^+ \hg_i|}{\binom{n}{i+1}}\geq \frac{|\hg_{i}|}{\binom{n}{i}} =g(n,i,r,t).
\]

Next we prove (ii). Note that a lattice path from $(0,0)$ to $(n+1-k,k)$ goes through either  $(n-k,k)$ or $(n-(k-1),k-1)$. It follows that
\[
g(n+1,k,r,t)\binom{n+1}{k} = g(n,k,r,t)\binom{n}{k}+g(n,k-1,r,t)\binom{n}{k-1}.
\]
By (i) we have $g(n,k,r,t)\geq g(n,k-1,r,t)$. Thus,
\[
g(n+1,k,r,t)\binom{n+1}{k} \leq g(n,k,r,t)\binom{n}{k}+g(n,k,r,t)\binom{n}{k-1}= g(n,k,r,t)\binom{n+1}{k}
\]
and (ii) follows.

Thirdly we prove (iii). Let $\ell(t,i)$ be the number of lattice paths from $(0,0)$ to $(i,(r-1)i+t)$ that  hit $y=(r-1)x+t$ first at $x=i$. Note that the number of lattice paths  from $(i,(r-1)i+t)$ to $(k,k)$ is $\binom{2k-ri-t}{k-(r-1)i-t}$. Thus,
\[
g(2k,k,r,t)=\sum_{0\leq i\leq \frac{k-t}{r-1}} \ell(t,i)\frac{\binom{2k-ri-t}{k-(r-1)i-t}}{\binom{2k}{k}}.
\]
Let $c_r(k,t,i) = \frac{\binom{2k-ri-t}{k-(r-1)i-t}}{\binom{2k}{k}}$. Then, using $\binom{2k}{k}/\binom{2k+2}{k+1} =\frac{k+1}{4k+2}$,
\begin{align*}
\frac{c_r(k+1,t,i)}{c_r(k,t,i)} &= \frac{\binom{2k+2-ri-t}{k+1-(r-1)i-t}}{\binom{2k-ri-t}{k-(r-1)i-t}}\cdot
\frac{\binom{2k}{k}}{\binom{2k+2}{k+1}}\\[2pt]
&= \frac{(2k+2-ri-t)(2k+1-ri-t)}{(k+1-(r-1)i-t)(k+1-i)} \cdot \frac{k+1}{4k+2}.
\end{align*}
Note that for $r\geq 3$ and $t\geq 2$ we have
\begin{align*}
&(2k+2-ri-t)(2k+1-ri-t)(k+1)-(k+1-(r-1)i-t)(k+1-i)(4k+2)\\[3pt]
=& \left( t(t-1)+2i(r-2)t+i(i(r-2)^2 - r)\right)k+ t (t - 1)+2 i (r - 1) t+i (i (r - 1)^2 - r) + i^2> 0.
\end{align*}
It follows that $c_r(k+1,t,i)> c_r(k,t,i)$ for all $0\leq i\leq \frac{k-t}{r-1}$. Thus,
\begin{align*}
g(2k,k,r,t) &=\sum_{0\leq i\leq \frac{k-t}{r-1}} \ell(t,i)c_r(k,t,i) \\[3pt]
&< \sum_{0\leq i\leq \frac{k-t}{r-1}} \ell(t,i)c_r(k+1,t,i)+\sum_{\frac{k-t}{r-1}< i\leq \frac{k+1-t}{r-1}} \ell(t,i)c_r(k+1,t,i)\\[3pt]
& =g(2k+2,k+1,r,t).
\end{align*}

Lastly we prove (iv). Let $k>  4\log k$ and let $P$ be a $p$-random walk of length $2k$ with $p=\frac{1}{2}+\sqrt{\frac{\log k}{k}}$.  Let $X$ be the number of vertical steps on $P$. Then
 \[
 \ex X=(2k)p =k+2\sqrt{k\log k}.
 \]
 Since $k\geq 4\log k$ implies $2k\geq k+2\sqrt{k\log k}$, by \eqref{chernoff-small} we have
\begin{align}\label{ineq-2}
Pr(X< k) \leq e^{-\frac{2k\log k}{k+2\sqrt{k\log k}}} \leq e^{-\log k} = \frac{1}{k}.
\end{align}
Note that
\begin{align*}
f(2k,r,t,p)&=\sum_{t\leq i\leq 2k} Pr(X=i) Pr[P\mbox{ hits } y=(r-1)x+t|X=i]\\[3pt]
& = \sum_{t\leq i\leq 2k} Pr(X=i) g(2k,i,r,t)\\[3pt]
& \geq \sum_{k\leq i\leq 2k} Pr(X=i) g(2k,i,r,t).
\end{align*}

By Proposition \ref{prop-key} (i) we have $g(2k,i,r,t)\geq g(2k,k,r,t)$ for all $i\geq k$. It follows that
\begin{align*}
f(2k,r,t,p)\geq g(2k,k,r,t)\sum_{k\leq i\leq 2k} Pr(X=i) = g(2k,k,r,t) Pr(X\geq k).
\end{align*}
By  \eqref{ineq-2}, we obtain that
\[
f\left(2k,r,t,\frac{1}{2}+\sqrt{\frac{\log k}{k}}\right) \geq g(2k,k,r,t) \frac{k-1}{k}.
\]
Letting $k$ go to infinity on both sides, we obtain that
\[
f\left(r,t,\frac{1}{2}\right)\geq \lim\limits_{k\rightarrow \infty} g(2k,k,r,t).
\]
\end{proof}

\begin{prop}\label{prop-key3}
For $n\geq 2k$,
\begin{align}\label{ineq-key3}
m(n,k,r,t)\leq \alpha_r^t \binom{n}{k},
\end{align}
where $\alpha_r$ is the unique root of  $x = \frac{1}{2}+ \frac{1}{2}x^{r}$ in the interval $(0,1)$.
\end{prop}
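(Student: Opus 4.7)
\medskip

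\noindent\textbf{Proof proposal for Proposition \ref{prop-key3}.}

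The plan is to reduce the extremal set-theoretic bound to a lattice-path probability and then chain together the four parts of Proposition \ref{prop-key} together with Lemma \ref{lem-key1}. First I would apply the shifting operator repeatedly; since this preserves both $|\hf|$ and the $r$-wise $t$-intersecting property, we may assume $\hf\subset\binom{[n]}{k}$ is shifted. By Corollary \ref{cor-2.2}, every $F\in\hf$ satisfies $|F\cap [t+ri]|\geq t+(r-1)i$ for some $i\geq 0$, which by construction of the lattice path $P(F)$ is exactly the statement that $P(F)$ hits the line $y=(r-1)x+t$. Consequently $\hf$ is contained in the set of those $F\in\binom{[n]}{k}$ whose associated path hits this line, so
\[
\frac{|\hf|}{\binom{n}{k}}\leq g(n,k,r,t).
\]

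Next I would transport $g(n,k,r,t)$ into a universal constant via the monotonicity properties. For $n\geq 2k$, Proposition \ref{prop-key}(ii) gives $g(n,k,r,t)\leq g(2k,k,r,t)$. Assuming for the moment that $t\geq 2$, part (iii) says the sequence $g(2k,k,r,t)$ is nondecreasing in $k$, so $g(2k,k,r,t)\leq \lim_{k'\to\infty}g(2k',k',r,t)$. Part (iv) bounds this limit by $f(r,t,\tfrac12)$, and Lemma \ref{lem-key1}(iii)--(iv) identify $f(r,t,\tfrac12)=\gamma^t$, where $\gamma$ is the unique root of $x=\tfrac12+\tfrac12 x^r$ in $(0,1)$, i.e.\ $\gamma=\alpha_r$. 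Chaining,
\[
\frac{|\hf|}{\binom{n}{k}}\leq g(n,k,r,t)\leq g(2k,k,r,t)\leq \lim_{k'\to\infty} g(2k',k',r,t)\leq f\!\left(r,t,\tfrac12\right)=\alpha_r^t.
\]

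For the edge case $t=1$ (where Proposition \ref{prop-key}(iii) is not available), I would argue directly from \eqref{ineq-1.6}: since $n\geq 2k\geq \tfrac{r}{r-1}k$, we get $m(n,k,r,1)=\binom{n-1}{k-1}=\tfrac{k}{n}\binom{n}{k}\leq \tfrac12\binom{n}{k}\leq \alpha_r\binom{n}{k}$, where the last inequality uses $\alpha_r\geq \tfrac12$ from Lemma \ref{lem-key1}(iv). (The case $r=2$ is vacuous because $\alpha_2=1$.)

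The conceptual core is the reduction in the first step, which converts a counting problem about $r$-wise $t$-intersecting families into a random-walk hitting probability; from there, everything is bookkeeping through already-proved monotonicity results. The only mild obstacle is keeping the $t=1$ case separate, since part (iii) of Proposition \ref{prop-key} is stated for $t\geq 2$, but \eqref{ineq-1.6} handles that case on its own.
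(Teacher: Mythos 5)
Your proof is correct and follows essentially the same route as the paper: shift, use Corollary \ref{cor-2.2} to bound $|\hf|$ by the number of lattice paths hitting $y=(r-1)x+t$, then chain Proposition \ref{prop-key} (ii), (iii), (iv) with Lemma \ref{lem-key1} to reach $\alpha_r^t\binom{n}{k}$. Your separate treatment of $t=1$ via \eqref{ineq-1.6} is a sensible extra precaution (the paper invokes part (iii) without commenting on its $t\geq 2$ hypothesis), but otherwise the arguments coincide.
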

\begin{proof}
Let $\hf\subset \binom{[n]}{k}$ be a shifted $r$-wise $t$-intersecting family with $|\hf|=m(n,k,r,t)$. By Corollary \ref{cor-2.2}, we infer that $|\hf|$ is at most  the number of lattice paths from $(0,0)$ to $(n-k,k)$ hitting $y=(r-1)x+t$. By $n\geq 2k$ and Proposition \ref{prop-key} (ii) (iii) (iv), it follows that
\[
|\hf|=m(n,k,r,t) \leq g(n,k,r,t)\binom{n}{k} \leq g(2k,k,r,t)\binom{n}{k} \leq f\left(r,t,\frac{1}{2}\right)\binom{n}{k} =\alpha_r^t\binom{n}{k}.
\]
\end{proof}

\section{Proof of Theorem \ref{thm:main-2}}

Let us prove a useful corollary of Theorem \ref{thm-F91}.

\begin{cor}\label{cor-4.1}
Let $\hf\subset \binom{[n]}{k}$ be a $3$-wise $t$-intersecting family. If $t\geq 4$ then $|\partial^{(2)} \hf|>4|\hf|$. If $t\geq 7$ then $|\partial^{(4)} \hf|>16|\hf|$.
\end{cor}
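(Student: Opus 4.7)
The plan is to invoke Theorem \ref{thm-F91} with $r=3$ and verify by direct computation that the minimum appearing on its right-hand side exceeds $4$ (resp.\ $16$) for $b=2$ (resp.\ $b=4$). A short manipulation gives
\[
\frac{\binom{3i+t}{i+b}}{\binom{3i+t}{i}}=\prod_{j=1}^{b}\frac{2i+t-b+j}{i+j},
\]
and each factor is strictly increasing in $t$ for fixed $i,j,b$. This monotonicity reduces the problem to the extremal cases $t=4$ (when $b=2$) and $t=7$ (when $b=4$); note that the constraint $b\leq t$ required by Theorem \ref{thm-F91} is satisfied in both cases.

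For $(b,t)=(2,4)$ the computation collapses almost immediately: $\frac{2i+4}{i+2}=2$, so the whole product equals $\frac{2(2i+3)}{i+1}=4+\frac{2}{i+1}>4$ for every $i\geq 0$, and Theorem \ref{thm-F91} delivers $|\partial^{(2)}\hf|>4|\hf|$.

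For $(b,t)=(4,7)$ the cleanest approach I see is to pair the four factors cleverly:
\[
\frac{(2i+4)(2i+5)(2i+6)(2i+7)}{(i+1)(i+2)(i+3)(i+4)}=\frac{2i+4}{i+2}\cdot\frac{2i+6}{i+3}\cdot\frac{(2i+5)(2i+7)}{(i+1)(i+4)}.
\]
The first two fractions equal $2$ exactly, contributing a factor of $4$, while
\[
\frac{(2i+5)(2i+7)}{(i+1)(i+4)}=4+\frac{4i+19}{(i+1)(i+4)}>4.
\]
Multiplying, the full product strictly exceeds $16$, and Theorem \ref{thm-F91} yields $|\partial^{(4)}\hf|>16|\hf|$.

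The whole argument is essentially a computation, so there is no serious obstacle; the only mild subtlety is that in the second case the ratio approaches $16$ from above as $i\to\infty$, so a crude bound is inadequate and one really needs either the pairing above or a careful polynomial expansion (cancelling the common $16i^4$ and checking that the remaining cubic in $i$ has positive coefficients).
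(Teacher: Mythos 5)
Your proof is correct and follows essentially the same route as the paper: both invoke Theorem \ref{thm-F91}, reduce to the extremal values $t=4$ and $t=7$ by monotonicity in $t$, and for $b=4$ use exactly the same pairing $\frac{2i+4}{i+2}=\frac{2i+6}{i+3}=2$ together with the inequality $(2i+5)(2i+7)>4(i+1)(i+4)$ (equivalently $4i+19>0$). Your factorization into the product $\prod_{j=1}^{b}\frac{2i+t-b+j}{i+j}$ is just a cleaner way of organizing the same computation.
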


\begin{proof}
For $t\geq 4$, we have
\[
\frac{\binom{3i+t}{i+2}}{\binom{3i+t}{i}} = \frac{(2i+t-1)(2i+t)}{(i+1)(i+2)}>2\times 2=4
\]
Applying Theorem \ref{thm-F91} with $b=2$, we obtain that
\[
|\partial^{(2)} \hf| \geq |\hf| \min_{0\leq i\leq \frac{k-t}{2}} \frac{\binom{3i+t}{i+2}}{\binom{3i+t}{i}} >4|\hf|.
\]
Similarly, if $t\geq 7$ then
\begin{align*}
\frac{\binom{3i+t}{i+4}}{\binom{3i+t}{i}} &=\frac{(2i+t)(2i+t-1)(2i+t-2)(2i+t-3)}{(i+4)(i+3)(i+2)(i+1)}\\[2pt]
&\geq \frac{(2i+7)(2i+6)(2i+5)(2i+4)}{(i+4)(i+3)(i+2)(i+1)} \\[2pt]
&=4\frac{(2i+7)(2i+5)}{(i+4)(i+1)}.
\end{align*}
Since
\[
(2i+7)(2i+5)=4\left(i^2+6i+\frac{35}{4}\right) >4(i^2+5i+4) =4(i+4)(i+1),
\]
we infer that $\frac{\binom{3i+t}{i+4}}{\binom{3i+t}{i}} >4\times 4=16$.
Applying Theorem \ref{thm-F91} with $b=4$, we obtain that
\[
|\partial^{(4)} \hf| \geq |\hf| \min_{0\leq i\leq \frac{k-t}{2}} \frac{\binom{3i+t}{i+4}}{\binom{3i+t}{i}} >16|\hf|.
\]
\end{proof}
\begin{fact}\label{fact-4.7}
For $n\geq \frac{\sqrt{4t+9}-1}{2}k$, $|\ha_1(n,k,3,t)|<\binom{n-t}{k-t}$. For $n=  \left(\frac{\sqrt{4t+9}-1}{2}-\epsilon\right)k$ with some  $0<\epsilon<\frac{1}{10}$ and $k\geq \frac{t^2+2t}{2\epsilon}$, $|\ha_1(n,k,3,t)|>\binom{n-t}{k-t}$.
\end{fact}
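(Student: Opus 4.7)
The plan is to compute $|\ha_1(n,k,3,t)|$ explicitly and compare it term by term with $\binom{n-t}{k-t}$, reducing both halves of the statement to a single quadratic inequality in the variables $a := n-k$ and $b := k-t$; the critical algebraic input is the defining identity $(\lambda_0-1)(\lambda_0+2) = t$ of $\lambda_0 := \frac{\sqrt{4t+9}-1}{2}$.

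First, since $\ha_1(n,k,3,t)$ consists of those $k$-subsets of $[n]$ meeting $[t+3]$ in either $t+2$ or $t+3$ elements,
\[
|\ha_1(n,k,3,t)| = \binom{n-t-3}{k-t-3} + (t+3)\binom{n-t-3}{k-t-2}.
\]
Partitioning the $k$-subsets that contain $[t]$ according to $|A\cap\{t+1,t+2,t+3\}|$ gives the Vandermonde expansion
\[
\binom{n-t}{k-t} = \binom{n-t-3}{k-t-3} + 3\binom{n-t-3}{k-t-2} + 3\binom{n-t-3}{k-t-1} + \binom{n-t-3}{k-t}.
\]
Subtracting and using $\binom{n-t-3}{k-t-1}/\binom{n-t-3}{k-t-2} = (n-k-2)/(k-t-1)$ and the analogous identity for $\binom{n-t-3}{k-t}$, the inequality $|\ha_1| \lessgtr \binom{n-t}{k-t}$ reduces to
\[
t(k-t-1)(k-t) \gtrless (n-k-2)\bigl[(n-k-3) + 3(k-t)\bigr],
\]
equivalently $g(a,b) := a^2 + (3b-5)a - tb^2 + (t-6)b + 6 \gtrless 0$.

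Substituting $a = (\lambda_0-1)(b+t)$, which corresponds exactly to $n = \lambda_0 k$, the coefficient of $b^2$ in $g(a,b)$ is $(\lambda_0-1)^2 + 3(\lambda_0-1) - t = (\lambda_0-1)(\lambda_0+2) - t = 0$ by the choice of $\lambda_0$. So on the line $n = \lambda_0 k$ the two sides of the inequality have the same $k^2$ leading term and $g$ reduces to a polynomial of degree at most one in $b$. For the first half I would verify directly that this residue is non-negative and then use that $g$ is increasing in $a$ on its positive branch, so that the inequality persists for every $n \geq \lambda_0 k$. For the lower bound I set $a = (\lambda_0-\epsilon-1)(b+t)$; the coefficient of $b^2$ becomes $(\lambda_0-\epsilon-1)(\lambda_0-\epsilon+2) - t = -\epsilon(2\lambda_0+1) + \epsilon^2$, which is strictly negative of order $-\epsilon$ since $\epsilon < 1/10$, while the remaining terms contribute at most $O(tb)$. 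The hypothesis $k-t \geq (t^2+2t)/(2\epsilon)$ is precisely what is needed for the negative quadratic-in-$b$ term to dominate the linear correction, giving $g < 0$ and hence $|\ha_1| > \binom{n-t}{k-t}$.

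The main obstacle is the fine bookkeeping of the $O(b)$ residue at $n = \lambda_0 k$ in the first half. Because the Fact pins the threshold to $\lambda_0 k$ exactly rather than to $\lambda_0 k + O(1)$, one cannot afford slack in the lower-order algebra at the boundary: the linear-in-$b$ coefficient of $g\bigl((\lambda_0-1)(b+t),b\bigr)$ must be shown non-negative directly from the identity $(\lambda_0-1)(\lambda_0+2)=t$, which after simplification reduces to an inequality between $t\lambda_0(2\lambda_0-1)$ and $5\lambda_0+1$. This is the one step where the constants must be tracked with care; the rest of the argument, including the $\epsilon$-expansion in the second half, is routine polynomial manipulation.
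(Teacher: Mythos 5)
Your proposal follows essentially the same route as the paper: compute $|\ha_1(n,k,3,t)|$ explicitly, clear denominators to get a polynomial inequality, and exploit that the leading coefficient vanishes exactly at the threshold because $\lambda_0^2+\lambda_0-t-2=0$ (the paper parametrizes by $x=n/k$ and $k$ instead of your $(a,b)=(n-k,k-t)$, but the resulting cubic and the sign analysis are the same). One arithmetic correction is needed before the "no-slack" boundary check you rightly emphasize: $\binom{n-t-3}{k-t-1}\big/\binom{n-t-3}{k-t-2}=\frac{n-k-1}{k-t-1}$, not $\frac{n-k-2}{k-t-1}$, so the reduced inequality should read $(n-k-1)\bigl[(n-k-2)+3(k-t)\bigr]\gtrless t(k-t)(k-t-1)$, i.e.\ $g(a,b)=a^2+(3b-3)a-tb^2+(t-3)b+2$; with this corrected $g$ the linear residue at $a=(\lambda_0-1)(b+t)$ does come out positive, though for $t=2$ its constant term is slightly negative and one needs $b\geq 1$, which the standing assumption $k\geq t+3$ (Proposition \ref{prop-1.7}) guarantees. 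Two further small slips: the hypothesis in the second half is $k\geq\frac{t^2+2t}{2\epsilon}$, not $k-t\geq\cdots$, and in your parametrization the constant term of the residue is of order $t^3$ (not $O(tb)$), though it is still comfortably absorbed since $b\gg t^2/\epsilon$.
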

\begin{proof}
By Proposition \ref{prop-1.7} we assume $k\geq t+3$.
Note that  $|\ha_1(n,k,3,t)|= (t+3)\binom{n-t-3}{k-t-2}+\binom{n-t-3}{k-t-3}= \frac{(t+3)n-(t+2)(k+1)}{k-t-2} \binom{n-t-3}{k-t-3}$. Then
 \begin{align*}
 \frac{|\ha_1(n,k,3,t)|}{\binom{n-t}{k-t}}&=\frac{(k-t)(k-t-1)((t+3)n-(t+2)(k+1))}{(n-t)(n-t-1)(n-t-2)}.
 \end{align*}
 Let $n=x k$ and define
 \begin{align*}
 f(k,x)&:=(k-t)(k-t-1)((t+3)n-(t+2)(k+1))-(n-t)(n-t-1)(n-t-2)\\[3pt]
 &= (k-t)(k-t-1)\left(((t+3)x-(t+2))k-(t+2)\right)-(xk-t)(xk-t-1)(xk-t-2).
 \end{align*}
 By simplification, we obtain that
 \[
  f(k,x) = - (x-1)k \left((x^2+x-t-2)k^2+ (2t^2+4t - 3(t+1) x)k-(t+1)(t^2-t-1)\right).
 \]
 Let
 \[
 g(k,x) = (x^2+x-t-2)k^2+ (2t^2+4t - 3(t+1) x)k-(t+1)(t^2-t-1).
 \]
 If $x\geq \frac{\sqrt{4t+9}-1}{2}$, then $x^2+x-t-2\geq 0$. By $k\geq t+3$, it follows that
 \begin{align*}
  g(k,x) &\geq (x^2+x-t-2)k(t+3)+ (2t^2+4t - 3(t+1) x)k-(t+1)(t^2-t-1)\\[3pt]
  &=(t^2 + 3 (x^2-2) + t (x^2- 2 x-1))k+1 + 2 t - t^3.
 \end{align*}
 Since $x\geq \frac{\sqrt{4t+9}-1}{2}>1.56$ and $t\geq 2$ imply $3 (x^2-2)>0$ and $t (x^2- 2 x-1)+t^2>0$,
 \begin{align*}
  g(k,x) &\geq \left(t^2 + 3 (x^2-2) + t (x^2- 2 x-1)\right)(t+3)+1 + 2 t - t^3\\[3pt]
 &\geq (x^2- 2x+2)t^2 + (6x^2- 6x-7)t + 9 x^2  -17\\[3pt]
  &\geq t^2 + (6x^2- 6x-7)t + (9 x^2  -17)\\[3pt]
  &\geq (6x^2- 6x-5)t + (9 x^2  -17)\\[3pt]
  &>0 .
 \end{align*}
 Thus $f(k,x)<0$ for  $x\geq \frac{\sqrt{4t+9}-1}{2}$. Therefore $|\ha_1(n,k,3,t)|<\binom{n-t}{k-t}$ for $n\geq \frac{\sqrt{4t+9}-1}{2}k$.

 If $x= \frac{\sqrt{4t+9}-1}{2}-\epsilon$, then by $\epsilon <\frac{1}{10}$ and $t\geq 2$,
 \[
 x^2-x+2 = -\epsilon (\sqrt{4t+9}-\epsilon)\leq -\epsilon (\sqrt{17}-\epsilon)<-4\epsilon.
 \]
 It follows that for $k\geq \frac{t^2+2t}{2\epsilon}$,
 \begin{align*}
(x^2+x-t-2)k^2+ (2t^2+4t - 3(t+1) x)k-(t+1)(t^2-t-1)&< -4\epsilon k^2+(2t^2+4t) k\leq 0
 \end{align*}
 Thus $|\ha_1(n,k,3,t)|>\binom{n-t}{k-t}$ for $k\geq \frac{t^2+2t}{2\epsilon}$ and $n=  (\frac{\sqrt{4t+9}-1}{2}-\epsilon)k$.
\end{proof}

\begin{proof}[Proof of Theorem \ref{thm:main-2}]
By Proposition \ref{prop-1.7}, we assume $k\geq t+r=5$. Let $\hf\subset \binom{[n]}{k}$ be a shifted 3-wise 2-intersecting family that is not a 2-star. By Fact \ref{fact-4.7}, $|\ha_1(n,k,3,2)|<\binom{n-2}{k-2}$ for $n\geq 2k$. Thus we may assume as  well $\hf\not\subset \ha_1(n,k,3,2)$.  We partition $\hf$ according $F\cap [5]$. Define
\begin{align*}
&\hf_0 =\left\{F\in \hf\colon \{1,2\}\subset F\right\},\\[2pt]
&\hf_i= \left\{F\in  \hf\colon F\cap [5] =[5]\setminus \{i\}\right\},\ i=1,2,\\[3pt]
&\hf_3= \left\{F\in \hf\colon \{1,2\}\not\subset F \mbox{ and } |F\cap [5]|=3\right\},\\[3pt]
&\hf_4= \left\{F\in \hf\colon \{1,2\}\not\subset F \mbox{ and } |F\cap [5]|=2\right\},\\[3pt]
&\hf_5=\left\{F\in \hf\colon |F\cap [5]|=1\right\},\\[3pt]
&\hf_6=\left\{F\in \hf\colon F\cap[5]=\emptyset\right\}.
\end{align*}
Then
\[
|\hf| =\sum_{0\leq i\leq 5} |\hf_i|.
\]

Since $\hf$ is 3-wise 2-intersecting and it is not a 2-star, by Fact \ref{fact-3.1} $\hf$ is 2-wise 3-intersecting. It follows that $\hf(\{1,2\})$ is 2-wise intersecting. By \eqref{ineq-1.6} we have
\begin{align}\label{ineq-hf0}
|\hf_0| =|\hf(\{1,2\})| \leq \binom{n-3}{k-3}.
\end{align}

Set
\[
\ha_i=\left\{F\setminus [5]\colon F\cap [5]=[5]\setminus \{i\}\right\}, i=1,2.
\]
\begin{claim}\label{claim-5.11}
We may assume  that $\ha_1$ and $\ha_2$ are cross-intersecting.
\end{claim}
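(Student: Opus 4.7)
The plan is to argue by contradiction: suppose $\ha_1$ and $\ha_2$ are not cross-intersecting. Since cross-intersection is vacuous if either family is empty, both must be non-empty, and we can pick $A_1 \in \ha_1$ and $A_2 \in \ha_2$ with $A_1 \cap A_2 = \emptyset$. By the definitions of $\ha_1$ and $\ha_2$, the sets $F_1 := \{2,3,4,5\} \cup A_1$ and $F_2 := \{1,3,4,5\} \cup A_2$ both belong to $\hf$, and since $A_1, A_2 \subset [6,n]$ are disjoint we get $F_1 \cap F_2 = \{3,4,5\}$. Applying the $3$-wise $2$-intersecting condition to the triple $F_1, F_2, F$ for an arbitrary $F \in \hf$ then forces $|F \cap \{3,4,5\}| \geq 2$.

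The next step is to leverage shiftedness to upgrade this to $|F \cap [5]| \geq 4$ for every $F \in \hf$. First, in the subcase $|F \cap \{3,4,5\}| = 2$: if some $i \in \{1,2\}$ were missing from $F$, then picking any $j \in F \cap \{3,4,5\}$, shiftedness would put $(F \setminus \{j\}) \cup \{i\}$ into $\hf$; but this set meets $\{3,4,5\}$ in only one element, contradicting the previous paragraph. Hence $\{1,2\} \subset F$ in this subcase. Next, in the subcase $\{3,4,5\} \subset F$ with $F \cap \{1,2\} = \emptyset$: shiftedness gives $F' := (F \setminus \{3\}) \cup \{1\} \in \hf$, and $F'$ satisfies $|F' \cap \{3,4,5\}| = 2$, so by the first subcase we would need $2 \in F'$, hence $2 \in F$, contradicting $F \cap \{1,2\} = \emptyset$. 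Combining all cases gives $|F \cap [5]| \geq 4$, i.e., $\hf \subset \ha_1(n,k,3,2)$.

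This, however, contradicts the earlier reduction $\hf \not\subset \ha_1(n,k,3,2)$ (which was justified by Fact~\ref{fact-4.7} together with $n > 2k$). Therefore $\ha_1$ and $\ha_2$ must be cross-intersecting, which is the content of the claim. The main delicate point is the two-stage shifting argument: one first has to establish the conclusion $\{1,2\} \subset F$ in the subcase $|F \cap \{3,4,5\}| = 2$, and then deploy that subcase as an internal lemma to rule out $\{3,4,5\} \subset F$ with $F \cap \{1,2\} = \emptyset$; everything else is a direct application of the $3$-wise $2$-intersecting hypothesis and the shiftedness of $\hf$.
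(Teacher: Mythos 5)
Your proposal is correct and follows the same route as the paper: assuming cross-intersection fails, produce $F_1,F_2\in\hf$ with $F_1\cap F_2=\{3,4,5\}$, deduce from the $3$-wise $2$-intersecting property and shiftedness that $|F\cap[5]|\geq 4$ for all $F\in\hf$, and contradict the standing assumption $\hf\not\subset\ha_1(n,k,3,2)$. The paper states the middle step without detail, and your two-stage shifting argument is a valid way to fill it in.
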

\begin{proof}
Indeed, otherwise there exist $F_1,F_2\in \hf$ with $F_1\cap F_2=\{3,4,5\}$. Using shiftedness and the 3-wise 2-intersecting property, $|F\cap [5]|\geq 4$ for all $F\in \hf$. Then $\hf\subset \ha_1(n,k,3,2)$, contradicting our assumption.
\end{proof}

Since $\ha_1,\ha_2\subset \binom{[6,n]}{k-4}$ are cross-intersecting, $n-5>2(k-4)$, by \eqref{ineq-1.7} we have 
\begin{align}\label{ineq-hf12}
|\hf_1|+|\hf_2|=|\ha_1|+|\ha_2| \leq \binom{n-5}{k-4}.
\end{align}

Note that $P(F)$ goes through $(2,3)$ and hits $y=2x+2$ for every $F\in \hf_3$. Since $n-5\geq 2(k-3)$, by Proposition \ref{prop-key} (ii) and (iv) the number of lattice paths from $(2,3)$ to $(n-k,k)$ hitting $y=2x+2$ is at most $ \left(\frac{\sqrt{5}-1}{2}\right)^3 \binom{n-5}{k-3}$.
The number of 3-sets $B\subset [5]$ with $[2]\not\subset B$ is $\binom{5}{3}-3=7$. Thus,
\begin{align}\label{ineq-hf3}
|\hf_3| \leq 7 \left(\frac{\sqrt{5}-1}{2}\right)^3 \binom{n-5}{k-3}<1.66\binom{n-5}{k-3}.
\end{align}

Similarly, $P(F)$ goes through $(3,2)$ and hits $y=2x+2$ for every $F\in \hf_4$ and the number of 2-sets $B\subset [5]$ with $[2]\not\subset B$ is $\binom{5}{2}-1=9$.  Since $n> 2k$ implies $n-5\geq 2(k-2)$, by Proposition  \ref{prop-key} (ii) and (iv) we infer that
\begin{align}\label{ineq-hf4}
|\hf_4| \leq 9 \left(\frac{\sqrt{5}-1}{2}\right)^6 \binom{n-5}{k-2}<0.51\binom{n-5}{k-2}.
\end{align}

Let $\hb_i = \hf(\{i\},[5])\subset \binom{[6,n]}{k-1}$, $i=1,2,\ldots,5$. By shiftedness $\hb_i$ is 3-wise 9-intersecting. Let $\hd_i=\partial^{(2)} \hb_i$. Then it is easy to see that $\hd_i$ is 3-wise $9-2\times 3=3$-intersecting. Since $n-5>2(k-3)$, by Proposition \ref{prop-key3} we infer that
\[
|\hd_i| \leq \left(\frac{\sqrt{5}-1}{2}\right)^3 \binom{n-5}{k-3}.
\]
Since $\hb_i$ is  3-wise 9-intersecting, by Corollary \ref{cor-4.1} we get
\[
|\hd_i| > 4|\hb_i|,\ i=1,2,3,4,5.
\]
Thus,
\begin{align}\label{ineq-hf5}
|\hf_5| =\sum_{1\leq i\leq 5} |\hb_i| \leq \frac{1}{4}\sum_{1\leq i\leq 5} |\hd_i| <\frac{5}{4} \left(\frac{\sqrt{5}-1}{2}\right)^3 \binom{n-5}{k-3}<0.3\binom{n-5}{k-3}.
\end{align}

Let $\ha_6=\partial^{(4)}\hf_6\subset \binom{[6,n]}{k-4}$. Since $\hf_6$ is 3-wise 12-intersecting, by  Corollary \ref{cor-4.1} we get $|\ha_6|>16|\hf_6|$.  By shiftedness, $\ha_6\subset \ha_i$ for $i=1,2$. By Claim \ref{claim-5.11}, we infer that $\ha_6$ is intersecting. Thus, by $k-2<(n-5)-(k-5)$ we obtain that
\begin{align}\label{ineq-hf6}
|\hf_6| <\frac{1}{16}|\ha_6| \leq \frac{1}{16} \binom{n-6}{k-5}< \frac{1}{16} \binom{n-5}{k-5}<\frac{1}{16} \binom{n-5}{k-2}.
\end{align}

Adding \eqref{ineq-hf0}, \eqref{ineq-hf12}, \eqref{ineq-hf3}, \eqref{ineq-hf4},
\eqref{ineq-hf5} and \eqref{ineq-hf6}, we conclude that
\begin{align*}
|\hf| &=\sum_{0\leq i\leq 6} |\hf_i| \\[3pt]
&\leq \binom{n-3}{k-3}+\binom{n-5}{k-4}+1.66\binom{n-5}{k-3}
+0.51\binom{n-5}{k-2}+0.3\binom{n-5}{k-3}+\frac{1}{16} \binom{n-5}{k-2}\\[3pt]
&< \binom{n-5}{k-5} +3\binom{n-5}{k-4}+3\binom{n-5}{k-3}+\binom{n-5}{k-2}\\[3pt]
&=\binom{n-2}{k-2}.
\end{align*}
\end{proof}

\section{Proof of Proposition \ref{prop-main1} and Theorem \ref{thm:main-3}}

Let us prove a useful inequality.

\begin{lem}
For $n> \frac{rk-t}{r-1}$,
\begin{align}\label{ineq-2.1}
& m(n,k,r,t) \leq m(n-1,k,r,t) +m(n-1,k-1,r,t).
\end{align}
\end{lem}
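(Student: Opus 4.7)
The plan is a standard element-splitting argument, where shiftedness is used together with Lemma \ref{lem-2.4} to show that removing a fixed element preserves the $r$-wise $t$-intersecting property once $n$ is large enough. Let $\hf\subset \binom{[n]}{k}$ be an extremal $r$-wise $t$-intersecting family, so $|\hf|=m(n,k,r,t)$. Since the shifting operator preserves both the size and the $r$-wise $t$-intersecting property, I would assume $\hf$ is shifted. Now partition $\hf$ according to whether an element contains $n$:
\[
\hf(\bar{n}) = \{F \in \hf : n \notin F\} \subset \binom{[n-1]}{k}, \qquad \hf(n)=\{F\setminus\{n\}\colon n\in F\in \hf\} \subset \binom{[n-1]}{k-1},
\]
so that $|\hf| = |\hf(\bar{n})| + |\hf(n)|$. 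Clearly $\hf(\bar{n})$ is $r$-wise $t$-intersecting, giving $|\hf(\bar{n})|\leq m(n-1,k,r,t)$. The content of the lemma lies in proving $|\hf(n)|\leq m(n-1,k-1,r,t)$.

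The key step, which I expect to be the only nontrivial point, is to show that $\hf(n)$ is itself $r$-wise $t$-intersecting. Note first that $\hf(n)$ inherits shiftedness from $\hf$ (since shifting on coordinates $i<j<n$ commutes with removing $n$). Fix arbitrary $F_1,\ldots,F_r\in \hf(n)$ and set $G_i=F_i\cup\{n\}\in \hf$. By Lemma \ref{lem-2.4} applied to the shifted $r$-wise $t$-intersecting family $\hf$, there exists $s$ with
\[
\sum_{1\leq i\leq r}|G_i\cap[s]|\geq (r-1)s+t.
\]
If $s\leq n-1$ then $|G_i\cap[s]|=|F_i\cap[s]|$, and the same inequality holds for the $F_i$'s. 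If on the other hand $s=n$, then $|G_i\cap[n]|=k$ for each $i$, forcing $rk\geq (r-1)n+t$, i.e., $n\leq (rk-t)/(r-1)$, which contradicts the hypothesis $n>(rk-t)/(r-1)$. Hence only the first case can occur, and by the converse direction of Lemma \ref{lem-2.4} (applied to the shifted family $\hf(n)\subset \binom{[n-1]}{k-1}$), $\hf(n)$ is $r$-wise $t$-intersecting.

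Therefore $|\hf(n)|\leq m(n-1,k-1,r,t)$, and combining with the bound on $|\hf(\bar{n})|$ gives
\[
m(n,k,r,t) = |\hf(\bar{n})|+|\hf(n)|\leq m(n-1,k,r,t)+m(n-1,k-1,r,t),
\]
as required. The threshold $n>(rk-t)/(r-1)$ is exactly what is needed to rule out the boundary case $s=n$ in the application of Lemma \ref{lem-2.4}, which is the only place the hypothesis is used.
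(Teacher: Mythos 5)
Your proposal is correct, and it follows the paper's overall plan exactly: pass to a shifted extremal family, split as $|\hf|=|\hf(\bar n)|+|\hf(n)|$, and reduce everything to the claim that $\hf(n)$ is $r$-wise $t$-intersecting. The only difference is how that claim is verified. The paper argues by contradiction directly on the sets: given $G_1,\dots,G_r\in\hf(n)$ with $|G_1\cap\cdots\cap G_r|=t-1$, the hypothesis $n>\frac{rk-t}{r-1}$ forces some $x\in[n-1]$ to lie in at most $r-2$ of the $G_i$, and shiftedness then lets one replace $n$ by $x$ in $G_1\cup\{n\}$ to produce an $r$-tuple of members of $\hf$ with intersection of size $t-1$. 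You instead route the argument through Lemma \ref{lem-2.4} in both directions: the forward direction supplies an $s$ for the sets $G_i=F_i\cup\{n\}$, the hypothesis rules out $s\geq n$ (you should say $s\geq n$ rather than just $s=n$, though the same computation $rk<(r-1)n+t\leq(r-1)s+t$ covers it), and the converse direction — which, as the paper's proof of Lemma \ref{lem-2.4} shows, does not even require shiftedness — yields the claim for $\hf(n)$. Both arguments use shiftedness and the threshold on $n$ in essentially the same counting step; yours is marginally cleaner in that it reuses an already-established lemma rather than redoing the swap argument by hand, while the paper's is self-contained at this point.
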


\begin{proof}
Let $\hf\subset \binom{[n]}{k}$ be a shifted $r$-wise $t$-intersecting family with $|\hf| = m(n,k,r,t)$. Clearly $\hf(\bar{n})$ is $r$-wise $t$-intersecting. It follows that $|\hf(\bar{n})|\leq m(n-1,k,r,t)$. We claim that $\hf(n)$ is also $r$-wise $t$-intersecting. Indeed, otherwise  there exist $G_1,G_2,\ldots,G_r\in \hf(n)$ such that $|G_1\cap G_2\cap \ldots\cap G_r|=t-1$. If each $i\in[n-1]$ is contained in at least $r-1$ of $G_1,G_2,\ldots,G_r$, then
\[
\sum_{1\leq i\leq r} G_i =rk \geq (r-1)\left((n-1)-(t-1)\right)+rt=(r-1)n+t,
\]
contradicting $n> \frac{rk-t}{r-1}$. Thus there exists $x\in [n-1]$ such that $x$ is contained in  at most $r-2$ of $G_1,G_2,\ldots,G_r$. Note that $G_i\cup \{n\}\in \hf$. Since $G_1\cup \{x\} \prec G_1\cup \{n\}$, by shiftedness we have $G_1\cup \{x\}\in \hf$. However,
\[
|(G_1\cup \{x\})\cap (G_2\cup \{n\})\cap \ldots\cap (G_r\cup \{n\})|=|G_1\cap G_2\cap \ldots\cap G_r|=t-1,
\]
contradicting the fact that $\hf$ is $r$-wise $t$-intersecting. Thus $\hf(n)$ is $r$-wise $t$-intersecting, implying that $|\hf(n)|\leq m(n-1,k-1,r,t)$. Therefore,
\[
m(n,k,r,t)=|\hf| =|\hf(\bar{n})|+|\hf(n)|\leq m(n-1,k,r,t)+m(n-1,k-1,r,t).
\]
\end{proof}

\begin{lem}\label{lem-6.2}
Suppose that $m(n,k,r,t)=\binom{n-t}{k-t}$ then
\[
m(n,k-1,r,t) =\binom{n-t}{k-1-t}.
\]
\end{lem}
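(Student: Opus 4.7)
The plan is to take an arbitrary $r$-wise $t$-intersecting family $\hg\subset\binom{[n]}{k-1}$, pass to the up-shadow $\partial^+\hg\subset\binom{[n]}{k}$, use the assumed extremal bound $m(n,k,r,t)=\binom{n-t}{k-t}$ to control $|\partial^+\hg|$, and then invert this estimate via the Kruskal--Katona theorem (applied to complements). Since the full $t$-star gives $m(n,k-1,r,t)\geq\binom{n-t}{k-1-t}$ for free, only the matching upper bound requires work.

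First I would verify that $\partial^+\hg$ is itself $r$-wise $t$-intersecting. Given $G_1,\dots,G_r\in\partial^+\hg$, choose $F_i\in\hg$ with $F_i\subset G_i$. Since $\hg$ is $r$-wise $t$-intersecting we have $|F_1\cap\cdots\cap F_r|\geq t$, and any $t$-subset $T$ of $F_1\cap\cdots\cap F_r$ is contained in every $G_i$, hence $T\subset G_1\cap\cdots\cap G_r$. The hypothesis then yields
\[
|\partial^+\hg|\leq m(n,k,r,t)=\binom{n-t}{k-t}=\binom{n-t}{n-k}.
\]

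Next I would pass to complements. Set $\hg^c:=\{[n]\setminus G:G\in\hg\}\subset\binom{[n]}{n-k+1}$. The one-line identity $[n]\setminus(G\cup\{x\})=([n]\setminus G)\setminus\{x\}$ gives $(\partial^+\hg)^c=\partial\hg^c$, so $|\partial\hg^c|=|\partial^+\hg|\leq\binom{n-t}{n-k}$. Now apply Theorem \ref{thm-kk} to $\hg^c\subset\binom{[n]}{n-k+1}$ with $m:=n-t$; the required $n-k+1\leq n-t$ follows from $k\geq t+1$, which is satisfied since $k\geq t+r$. The contrapositive of Kruskal--Katona delivers
\[
|\hg|=|\hg^c|\leq\binom{n-t}{n-k+1}=\binom{n-t}{k-1-t},
\]
completing the upper bound. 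I do not foresee any real obstacle: the whole argument rests on the complement identity $(\partial^+\hg)^c=\partial\hg^c$, which is exactly what converts the assumed bound for $k$-uniform families into the desired bound for $(k-1)$-uniform families via Kruskal--Katona.
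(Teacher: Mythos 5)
Your proposal is correct and follows essentially the same route as the paper's proof: pass to complements, identify $(\partial^+\hg)^c$ with $\partial(\hg^c)$, and invoke Theorem \ref{thm-kk}; the paper merely phrases it as a proof by contradiction while you state the contrapositive directly, and you additionally spell out why $\partial^+\hg$ inherits the $r$-wise $t$-intersecting property, which the paper leaves implicit. No gaps.
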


\begin{proof}
Assume that $\hg\subset \binom{[n]}{k-1}$ is an $r$-wise $t$-intersecting family and $|\hg|>\binom{n-t}{k-1-t}=\binom{n-t}{n-k+1}$. Set
\[
\hg^c= \left\{[n]\setminus G\colon G\in \hg\right\}.
\]
Note that $|\hg^c|=|\hg|>\binom{n-t}{n-k+1}$. By Theorem \ref{thm-kk} we have $|\partial \hg^c|>\binom{n-t}{n-k}$. Define
\[
\hf =\left\{[n]\setminus G\colon G\in \partial \hg^c\right\}.
\]
It is easy to see that  $\hf= \partial^+(\hg)$. It follows that $\hf\subset \binom{[n]}{k}$ is $r$-wise $t$-intersecting. Then
\[
|\hf| \leq m(n,k,r,t)=\binom{n-t}{k-t},
\]
contradicting $|\hf|= |\partial \hg^c|>\binom{n-t}{n-k}=\binom{n-t}{k-t}$.
\end{proof}

Let $\hf\subset \binom{[n]}{k}$ be an $r$-wise $t$-intersecting family.
We say that $\hf$ is {\it saturated} if any addition of an extra $k$-set to $\hf$ would destroy the $r$-wise $t$-intersecting property. We say $\hf_1,\hf_2,\ldots,\hf_r\subset \binom{[n]}{k}$ are {cross $t$-intersecting} if $|F_1\cap F_2\cap\ldots \cap F_r|\geq t$ for all $F_1\in \hf_1$, $F_2\in \hf_2$, $\ldots$, $F_r\in \hf_r$.

\begin{lem}\label{lem-6.4}
Let $\hf\subset \binom{[n]}{k}$ be a shifted and saturated $r$-wise $t$-intersecting family. Let $\hg_i=\hf([t+1]\setminus \{i\},[t+1])$, $i=1,2,3,\ldots,t$. If $\hf$ is not a $t$-star, then $\hg_i=\hg_j$ for all $1\leq i<j\leq t$.
\end{lem}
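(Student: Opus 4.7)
The plan is to establish both containments $\hg_i\subset\hg_j$ and $\hg_j\subset\hg_i$ for $1\le i<j\le t$ separately. The first containment is straightforward from shiftedness: given $G\in\hg_i$, the set $F_i:=([t+1]\setminus\{i\})\cup G$ lies in $\hf$ and contains $j$ but not $i$, so applying the operator $S_{ij}$ (under which $\hf$ is invariant) produces $S_{ij}(F_i)=([t+1]\setminus\{j\})\cup G=:F_j\in\hf$, and hence $G\in\hg_j$.

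For the reverse containment I would argue by contradiction: suppose there exists $G\in\hg_j\setminus\hg_i$, so that $F_j\in\hf$ but $F_i\notin\hf$. Since $\hf$ is saturated, there exist $F_2,\ldots,F_r\in\hf$ with $|F_i\cap F_2\cap\cdots\cap F_r|\le t-1$. Write $I:=F_2\cap\cdots\cap F_r$. Because $F_j=(F_i\setminus\{j\})\cup\{i\}$, a direct count yields
\[
|F_j\cap I|-|F_i\cap I|=[i\in I]-[j\in I],
\]
and combining this identity with $|F_j\cap I|\ge t$ and $|F_i\cap I|\le t-1$ pins the configuration down to $i\in I$, $j\notin I$, $|F_i\cap I|=t-1$ and $|F_j\cap I|=t$.

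To close, I would invoke the hypothesis that $\hf$ is not a $t$-star. Applying the shift $S_{j,t+1}$ to $F_j$ (which contains $t+1$ but not $j$) produces $F^{\star}:=[t]\cup G\in\hf$, and the analogous identity $|F^{\star}\cap I|=|F_j\cap I|+[j\in I]-[t+1\in I]=t-[t+1\in I]$ combined with $|F^{\star}\cap I|\ge t$ forces $t+1\notin I$. Hence some $F_{s_0}\in\{F_2,\ldots,F_r\}$ satisfies $t+1\notin F_{s_0}$, and since $k\ge t+r$ the set $F_{s_0}$ contains some element $z>t+1$. By shiftedness the set $F_{s_0}^{\sharp}:=(F_{s_0}\setminus\{z\})\cup\{t+1\}$ also lies in $\hf$. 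Replacing $F_{s_0}$ by $F_{s_0}^{\sharp}$ in the $r$-tuple creates a new $r$-tuple in $\hf$ whose intersection with $F_j$ now contains $t+1$, and combining this with the $(r-1)$-wise $(t+1)$-intersecting property supplied by Fact~\ref{fact-3.1} (applied to $r-1$ of these sets) supplies the extra common element needed to break the tight balance $|F_j\cap I|=t$.

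The main obstacle is precisely this last step: the substitution by $F_{s_0}^{\sharp}$ must be shown to produce a genuine contradiction rather than merely rebalance the count. The argument hinges on choosing $z$ so that it is not ``rescued'' by another $F_s$ in the tuple and on using Fact~\ref{fact-3.1} to convert the non-$t$-star hypothesis into a sharper intersection bound. It is exactly the non-$t$-star assumption that licenses the comparison of $F^{\star}$ with $F_j$ on $I$ (forcing $t+1\notin I$) and that activates Fact~\ref{fact-3.1} to drive the final contradiction.
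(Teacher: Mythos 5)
Your easy direction ($\hg_i\subset\hg_j$ for $i<j$, via shiftedness) is correct and coincides with the paper's first step, and your deductions in the hard direction up to ``$i\in I$, $j\notin I$, $|F_i\cap I|=t-1$, $|F_j\cap I|=t$, and $t+1\notin I$'' are sound. But the final step is a genuine gap, and you essentially concede this yourself. Replacing $F_{s_0}$ by $F_{s_0}^{\sharp}=(F_{s_0}\setminus\{z\})\cup\{t+1\}$ cannot yield a contradiction: saturation is an existential statement (\emph{some} $(r-1)$-tuple of $\hf$ is bad for $F_i$), so exhibiting a \emph{different} tuple whose intersection with $F_i$ happens to be large proves nothing. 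Likewise, Fact~\ref{fact-3.1} gives $(r-1)$-wise $(t+1)$-intersection, which constrains $(r-1)$-tuples and is perfectly compatible with $|F_j\cap F_2\cap\cdots\cap F_r|=t$ for an $r$-tuple; it cannot ``break the tight balance.'' Also, contrary to your closing remark, the comparison of $F^{\star}$ with $I$ uses only shiftedness, not the non-$t$-star hypothesis (which in the paper's argument serves only to note $\hg_t\neq\emptyset$).

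The missing idea is that the single cardinality bound $|F_i\cap I|\le t-1$ is too weak; one must use the prefix-sum characterization of Lemma~\ref{lem-2.4}. From $|F_i\cap F_2\cap\cdots\cap F_r|\le t-1$ one gets $|F_i\cap[x]|+\sum_{2\le l\le r}|F_l\cap[x]|\le(r-1)x+t-1$ for \emph{every} $x$, while from $F_j,F_2,\ldots,F_r\in\hf$ and shiftedness Lemma~\ref{lem-2.4} supplies some $s\ge t$ with $|F_j\cap[s]|+\sum_{2\le l\le r}|F_l\cap[s]|\ge(r-1)s+t$. The decisive observation is that $|F_i\cap[s]|=|F_j\cap[s]|$ for all $s\ge t$ (both sets meet $[t]$ in exactly $t-1$ elements, meet $[t+1]$ in exactly $t$ elements, and agree beyond $t+1$), so the two inequalities contradict each other at $x=s$. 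This is the paper's route; your intersection-counting route discards exactly the information (the existence of a witnessing $s\ge t$ and the full traces $|{\cdot}\cap[s]|$) that drives the contradiction, and I do not see how to complete it without reintroducing that information.
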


\begin{proof}
Since $\hf$ is not a $t$-star, there exists some $F_0\in \hf$ such that $|F_0\cap [t]|\leq t-1$. By shiftedness, we may assume that $F_0\cap [t]= [t]\setminus \{t\}$.
By shiftedness again,
\begin{align}\label{ineq-6.2}
\hg_1\subset\hg_2\subset \ldots\subset \hg_{t}.
\end{align}
Since $F_0\setminus [t+1] \in \hg_t$, we have $\hg_t\neq \emptyset$.

By \eqref{ineq-6.2} it suffices to show that $\hg_1=\hg_t$. Suppose for contradiction that $\hg_1\subsetneq \hg_t$. Then there exists some $G_t\in \hg_t\setminus \hg_1$. Then $F:= G_t\cup ([t+r]\setminus \{t\})\in \hf$ and $F':=G_t\cup ([t+1]\setminus \{1\})\notin \hf$. By saturatedness and Lemma \ref{lem-2.4}, we infer that there exist $F_1,F_2,\ldots,F_{r-1}\in \hf$ such that for all  $x\geq 0$,
\[
|F'\cap [x]|+\sum_{1\leq i\leq r-1} |F_i\cap [x]| \leq  (r-1) x+t-1.
\]
Since $F,F_1,F_2,\ldots,F_{r-1}\in \hf$,
by Lemma \ref{lem-2.4} there exists some $s\geq t$ such that
\[
\left|F\cap [s]\right|+\sum_{1\leq i\leq r-1} |F_i\cap [s]| \geq  (r-1) s+t.
\]
It follows that $|F'\cap [s]| <|F\cap [s]|$, contradicting the fact that $s\geq t$. Thus $\hg_1=\hg_t$ and the lemma follows.
\end{proof}

\begin{lem}\label{lem-6.5}
For $k\geq 3$,
\[
m(2k,k,4,3)=\binom{n-3}{k-3}.
\]
\end{lem}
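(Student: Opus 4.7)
The plan is to combine the lattice-path decomposition from Section~2 with an improved bound on $\hf_0$ produced by Fact~\ref{fact-3.1}. First I would replace $\hf$ by its shifted version (preserving size and the $r$-wise $t$-intersecting property), then dispose of the trivial case that $\hf$ is a $3$-star (giving $|\hf|\leq \binom{2k-3}{k-3}$ immediately), and henceforth assume $\hf$ is not a $3$-star, so by Fact~\ref{fact-3.1} it is $3$-wise $4$-intersecting. Using Corollary~\ref{cor-2.2} I then partition $\hf = \bigsqcup_{i\geq 0}\hf_i$, where $\hf_i$ collects those $F$ whose lattice path $P(F)$ first meets the line $y=3x+3$ at $(i,3i+3)$; in particular $\hf_0 = \hf([3])$.

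For $i=0$: since every $F\in\hf_0$ contains $[3]$ and $\hf$ is $3$-wise $4$-intersecting, the family $\{F\setminus[3] : F\in\hf_0\}\subset \binom{[4,2k]}{k-3}$ is $3$-wise intersecting, so \eqref{ineq-1.6} (applicable because $2k-3 \geq \tfrac{3}{2}(k-3)$) yields $|\hf_0|\leq \binom{2k-4}{k-4}$. For $i\geq 1$: each member of $\hf_i$ is encoded by a distinct lattice path from $(0,0)$ to $(k,k)$ first touching $y=3x+3$ at $(i,3i+3)$; a standard first-passage count (cycle lemma) gives $\tfrac{3}{4i+3}\binom{4i+3}{i}$ initial segments from $(0,0)$ to $(i,3i+3)$ that avoid the line except at the endpoint, while the free tail from $(i,3i+3)$ to $(k,k)$ contributes $\binom{2k-4i-3}{k-3i-3}$, so
\[
|\hf_i| \leq \frac{3}{4i+3}\binom{4i+3}{i}\binom{2k-4i-3}{k-3i-3}.
\]

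Summing and applying Pascal's identity $\binom{2k-3}{k-3}=\binom{2k-4}{k-4}+\binom{2k-4}{k-3}$, the goal $|\hf|\leq \binom{2k-3}{k-3}$ reduces to verifying
\[
\sum_{i=1}^{\lfloor (k-3)/3\rfloor}\frac{3}{4i+3}\binom{4i+3}{i}\binom{2k-4i-3}{k-3i-3} \leq \binom{2k-4}{k-3}.
\]
The hard part will be this final numerical inequality. Asymptotically the dominant $i=1$ term satisfies $3\binom{2k-7}{k-6}/\binom{2k-4}{k-3}\to 3/8$ as $k\to\infty$, and the ratios of successive terms shrink by an additional factor of order $1/16$, so a simple geometric comparison handles all sufficiently large $k$; for $k\leq 5$ the sum is empty (only the $\hf_0$ bound is needed, and $\binom{2k-4}{k-4}\leq \binom{2k-3}{k-3}$ is trivial), and for the remaining finite range $6\leq k\leq k_0$ a direct numerical check closes the gap.
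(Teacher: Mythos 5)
Your route is genuinely different from the paper's. The paper splits on whether $\hf$ is $3$-wise $5$-intersecting; in the complementary case it uses the saturation lemma (Lemma \ref{lem-6.4}) to identify the three families $\hf([4]\setminus\{i\},[4])$ and then invokes the random-walk bound of Proposition \ref{prop-key3}, while the $5$-intersecting case is killed outright by $\alpha_3^5\binom{2k}{k}$. You instead refine the lattice-path count of Section 2 by a first-passage decomposition. The skeleton of your argument is sound: the reduction to the shifted non-star case, the bound $|\hf_0|\leq\binom{2k-4}{k-4}$ via Fact \ref{fact-3.1} and \eqref{ineq-1.6}, the first-passage count $\frac{3}{4i+3}\binom{4i+3}{i}$ (a Fuss--Catalan number, correct by the cycle lemma), and the reduction via Pascal to $\sum_{i\geq 1}|\hf_i|\leq\binom{2k-4}{k-3}$ are all fine, and the method avoids both Lemma \ref{lem-6.4} and Proposition \ref{prop-key3} entirely.

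The gap is that the decisive numerical inequality is not actually proved, and your stated reason for believing it is incorrect. You claim successive terms shrink by "an additional factor of order $1/16$", but the prefactor $\frac{3}{4i+3}\binom{4i+3}{i}$ grows like $(4^4/3^3)^i=(256/27)^i$, so writing $t_i$ for the $i$-th term and $B=\binom{2k-4}{k-3}$, the limiting ratios are
\begin{equation*}
\frac{t_2}{t_1}\to\frac{5}{16},\qquad \frac{t_3}{t_2}\to\frac{91}{240},\qquad \frac{t_{i+1}}{t_i}\to\frac{256}{27}\cdot\frac{1}{16}=\frac{16}{27}\approx 0.59 ,
\end{equation*}
not anything of order $1/16$. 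The series still converges, with asymptotic sum about $0.58\,B$ (namely $\frac38 B+\frac{15}{128}B+\frac{91}{2048}B+\cdots$), so your conclusion survives --- but only because $16/27<1$, a fact you never establish and which is exactly where the content lies. To close the proof you would need (a) a uniform bound such as $t_{i+1}/t_i\leq 16/27$ valid for all $i$ and all admissible $k$ (beware that for finite $k$ the binomial ratio $\binom{2k-4i-7}{k-3i-6}/\binom{2k-4i-3}{k-3i-3}$ is not termwise $\leq 1/16$: one of its four factors, $(k-i)/(2k-4i-6)$, exceeds $1/2$), together with a bound on $t_1/B$, and (b) the finite verification you defer, with an explicit threshold $k_0$. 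As written, the key inequality rests on a miscalculated decay rate and an unexecuted check.
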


\begin{proof}
Let $n=2k$ and let $\hf\subset \binom{[n]}{k}$ be a shifted 4-wise 3-intersecting family. Without loss of generality, assume that $\hf$ is saturated and is not a 3-star. We distinguish two cases.

\vspace{3pt}
{\noindent\bf Case A.} There exist $F_1,F_2,F_3\in \hf$ such that $|F_1\cap F_2\cap F_3|=4$.
\vspace{3pt}

By shiftedness, we may assume $F_1\cap F_2\cap F_3=[4]$. Then the 4-wise 3-intersecting property implies $|F\cap [4]|\geq 3$ for all  $F\in \hf$. Define $\hh_i=\hf([4]\setminus \{i\}, [4])$ for $i=1,2,3$. By Lemma \ref{lem-6.4} these three families are identical. Set $\hh=\hh_1$. Since $\hf$ is 4-wise 3-intersecting, $\hh_1,\hh_2,\hh_3$  are cross 3-intersecting. Thus $\hh$ is 3-wise 3-intersecting. As $n-4>2(k-3)$, by Proposition \ref{prop-key3},
\begin{align}\label{ineq-6.5}
|\hh_1|+|\hh_2|+|\hh_3| =3|\hh| <3\left(\frac{\sqrt{5}-1}{2}\right)^3\binom{n-4}{k-3}<\binom{n-4}{k-3}.
\end{align}

Since $\hf$ is not a 3-star, by Fact \ref{fact-3.1}, $\hf([3])$ is 3-wise intersecting. By \eqref{ineq-1.6},
\begin{align}\label{ineq-6.6}
|\hf([3])| \leq \binom{n-4}{k-4}.
\end{align}
Adding \eqref{ineq-6.5} and \eqref{ineq-6.6}, $|\hf|<\binom{n-3}{k-3}$ follows.

\vspace{3pt}
{\noindent\bf Case B.} $\hf$ is 3-wise 5-intersecting.
\vspace{3pt}

By Proposition \ref{prop-key3},
\[
|\hf| \leq \left(\frac{\sqrt{5} -1}{2}\right)^5 \binom{2k}{k} <0.0902\binom{2k}{k}.
\]
Note that
\[
\frac{\binom{2k-3}{k-3}}{\binom{2k}{k}} =\frac{k(k-1)(k-2)}{2k(2k-1)(2k-2)} =\frac{1}{4} \times \frac{k-2}{2k-1}.
\]
Since we may assume $k\geq  4+3=7$,
\[
|\hf|<0.0902\binom{2k}{k} \leq 0.0902 \times 4 \times \frac{2k-1}{k-2}\binom{2k-3}{k-3} \leq 0.0902 \times 4 \times \frac{13}{5}\binom{2k-3}{k-3}<\binom{2k-3}{k-3}.
\]
Thus $m(2k,k,4,3)=\binom{2k-3}{k-3}$.
\end{proof}

\begin{lem}\label{lem-6.6}
For $k\geq 4$,
\[
m(2k,k,4,4)=\binom{n-4}{k-4}.
\]
\end{lem}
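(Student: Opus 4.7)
I would mirror the Case~A/Case~B decomposition used in the proof of Lemma~\ref{lem-6.5}, replacing $t=3$ by $t=4$ throughout. Let $\hf\subset\binom{[n]}{k}$ with $n=2k$ be shifted, saturated, 4-wise 4-intersecting, and not a 4-star; Fact~\ref{fact-3.1} then gives that $\hf$ is additionally 3-wise 5-intersecting and 2-wise 6-intersecting. The split is on whether some triple achieves 3-wise intersection exactly $5$.

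For Case~A (some $|F_1\cap F_2\cap F_3|=5$), shifting lets me assume $F_1\cap F_2\cap F_3=[5]$, so 4-wise 4-intersection forces $|F\cap[5]|\ge 4$ for every $F$. I would partition $\hf$ by $F\cap[5]$ into $\hh_0=\hf([5])$ and $\hh_i=\hf([5]\setminus\{i\},[5])$ for $i=1,\dots,5$. Lemma~\ref{lem-6.4} collapses $\hh_1=\hh_2=\hh_3=\hh_4=:\hh$. The union $\hh_0\cup\hh_5$ is a coding of $\hf([4])$, which is 3-wise intersecting by Fact~\ref{fact-3.1}, so \eqref{ineq-1.6} gives $|\hh_0|+|\hh_5|\le\binom{2k-5}{k-5}$. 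The common piece $\hh\subset\binom{[6,2k]}{k-4}$ is 4-wise 3-intersecting; Proposition~\ref{prop-key3} yields $|\hh|\le\alpha_4^3\binom{2k-5}{k-4}$, and Lemma~\ref{lem-key1}(iv) makes $4\alpha_4^3<1$, so $\sum_{i=1}^{4}|\hh_i|<\binom{2k-5}{k-4}$. Adding collapses to $\binom{2k-5}{k-5}+\binom{2k-5}{k-4}=\binom{2k-4}{k-4}$, with strict inequality.

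For Case~B, $\hf$ is 3-wise 6-intersecting (and, since it cannot be a 6-star, 2-wise 7-intersecting by Fact~\ref{fact-3.1}). Proposition~\ref{prop-key3} with $(r,t)=(3,6)$ yields $|\hf|\le\alpha_3^6\binom{2k}{k}$, and the ratio $\binom{2k-4}{k-4}/\binom{2k}{k}$ is monotone in $k$ with limit $1/16>\alpha_3^6$, which settles large $k$. For moderate $k$ I would further sub-split: if $\hf$ is 3-wise 7-intersecting then Proposition~\ref{prop-key3} with $\alpha_3^7$ already beats $\binom{2k-4}{k-4}/\binom{2k}{k}$ at $k=8$; otherwise some triple intersects exactly in $[6]$, forcing $|F\cap[6]|\ge 4$ for every $F$. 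Then decompose $\hf$ by $F\cap[6]$ into $\hf([6])$, six pieces $\hg_i$ and fifteen pieces $\hg_{ij}$, using shift chains to get $|\hg_{ij}|\le|\hg_{56}|$ and $|\hg_i|\le|\hg_6|$. I would bound $|\hf([6])|\le\binom{2k-7}{k-7}$ via EKR applied to the 2-wise 7-intersection, $|\hg_6|\le\binom{2k-7}{k-6}$ via \eqref{ineq-1.6} on the residual 3-wise intersecting family in $\binom{[7,2k]}{k-5}$, and $|\hg_{56}|\le\binom{2k-8}{k-6}$ via Theorem~\ref{thm:main-2} on the residual 3-wise 2-intersecting family in $\binom{[7,2k]}{k-4}$.

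The main obstacle will be this final sub-case: the fifteen $\hg_{ij}$ pieces, each bounded only crudely by $|\hg_{56}|$, make the summed bound loose at moderate $k$. To sharpen, one must combine the 3-wise 6-intersecting and 4-wise 4-intersecting constraints on the residual pairs $B_i\subset[7,2k]$; in particular, the conditions $|A\cap B_i\cap B_j|\ge 2$ together with $B_1\cap B_2\cap B_3=\emptyset$ have to be controlled against the available space. For the very smallest $k$ (roughly $k\le 9$) these feasibility constraints together with $|B_i|\le 3$ are self-contradictory, so the sub-case is vacuous; for the intermediate $k$ a careful accounting of how the $\hg_{ij}$'s sit inside $\binom{[7,2k]}{k-4}$ under shiftedness is the delicate step that must close the gap between Proposition~\ref{prop-key3} and the target bound $\binom{2k-4}{k-4}$.
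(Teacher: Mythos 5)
Your Case~A is correct and follows the paper's Case~A (your count that $\hh$ is $4$-wise $3$-intersecting, with $4\alpha_4^3<8/11<1$ via \eqref{ineq-frankl}, is fine), and your direct disposal of the $3$-wise $7$-intersecting sub-case by Proposition \ref{prop-key3} works for $k\ge 8$, which Proposition \ref{prop-1.7} lets you assume; that part is even a little shorter than the paper's Case~C. The genuine gap is exactly where you flag it: the sub-case in which $\hf$ is $3$-wise $6$-intersecting but not $3$-wise $7$-intersecting. This cannot be repaired by ``careful accounting for intermediate $k$,'' because your bound fails for all large $k$. The target splits as $\binom{2k-4}{k-4}=\binom{2k-7}{k-4}+3\binom{2k-7}{k-5}+3\binom{2k-7}{k-6}+\binom{2k-7}{k-7}$, and the three coefficients at levels $k-4,k-5,k-6$ are asymptotically equal, so the budget outside $\hf([6])$ is about $7\binom{2k-7}{k-6}$; your bounds contribute $15\binom{2k-8}{k-6}\approx 7.5\binom{2k-7}{k-6}$ from the $\hg_{ij}$ alone and another $6\binom{2k-7}{k-6}$ from the $\hg_i$, roughly $13.5$ units against $7$. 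The deficit is structural: comparing each $\hg_{ij}$ to the largest one discards the fact that a set with $|F\cap[6]|=4$ must recover the two missing elements of every $6$-element triple intersection outside $[6]$, a constraint on the union $\bigcup\hg_{ij}$ that is much stronger than any bound on a single piece.

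The missing idea is the lattice-path estimate. In this case the paper partitions $\hf$ into $\hf([4])$ (which is $3$-wise $2$-intersecting, hence at most $\binom{2k-6}{k-6}$ by Theorem \ref{thm:main-2}), the sets with $[4]\not\subset F$ and $|F\cap[6]|=4$, and those with $[4]\not\subset F$ and $|F\cap[6]|=5$. For a set of the first kind, $P(F)$ passes through $(2,4)$ and, by Corollary \ref{cor-2.2}, must still hit $y=3x+4$, i.e.\ the remaining walk must reach a line six units above its starting point; Proposition \ref{prop-key} (ii), (iv) together with \eqref{ineq-frankl} bound all fourteen such pieces \emph{simultaneously} by $14\alpha_4^6\binom{2k-6}{k-4}<\binom{2k-6}{k-4}$ --- the factor $\alpha_4^6<(2^4-5)^{-3/2}$ is precisely the saving your crude comparison forgoes. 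The four pieces of the second kind collapse to a single $4$-wise $2$-intersecting family by Lemma \ref{lem-6.4}, and Theorem \ref{thm:main-2} gives $4\binom{2k-8}{k-7}<\binom{2k-6}{k-5}$. The three bounds sum to less than $\binom{2k-6}{k-6}+\binom{2k-6}{k-4}+2\binom{2k-6}{k-5}=\binom{2k-4}{k-4}$, closing the case.
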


\begin{proof}
Let $n=2k$ and let $\hf\subset \binom{[n]}{k}$ be a shifted 4-wise 4-intersecting family. Without loss of generality, assume that $\hf$ is saturated and is not a 4-star. We distinguish three cases.

\vspace{3pt}
{\noindent\bf Case A.} $\hf$ is 3-wise 5-intersecting but not 3-wise 6-intersecting.
\vspace{3pt}

By shiftedness, we may assume $F_1\cap F_2\cap F_3=[5]$ for some $F_1,F_2,F_3\in \hf$. Then $|F\cap [5]|\geq 4$ for all  $F\in \hf$. Define $\hh_i=\hf([5]\setminus \{i\}, [5])$ for $i=1,2,3,4$. By Lemma \ref{lem-6.4} these four families are identical. Since $\hf$ is 4-wise 4-intersecting, $\hh_1,\hh_2,\hh_3,\hh_4$  are cross 4-intersecting. Thus $\hh$ is 4-wise 4-intersecting. As $n-5>2(k-4)$, by Proposition \ref{prop-key3},
\begin{align}\label{ineq-6.7}
\sum_{1\leq i\leq 4}|\hh_i| =4|\hh| <4\alpha_4^4\binom{n-5}{k-4}\overset{\eqref{ineq-frankl}}
{<}\frac{4}{2^4-4-1}\binom{n-5}{k-4}<\binom{n-5}{k-4}.
\end{align}

Since $\hf$ is not a 4-star, by Fact \ref{fact-3.1}, $\hf([4])$ is 3-wise intersecting. By \eqref{ineq-1.6},
\begin{align}\label{ineq-6.8}
|\hf([4])| \leq \binom{n-5}{k-5}.
\end{align}
Adding \eqref{ineq-6.7} and \eqref{ineq-6.8}, $|\hf|<\binom{n-4}{k-4}$ follows.

\vspace{3pt}
{\bf Case B.} $\hf$ is 3-wise 6-intersecting but not 3-wise 7-intersecting.
\vspace{3pt}

Then $\hf([4])$ is $3$-wise 2-intersecting. Since $n-4>2(k-4)$, by Theorem \ref{thm:main-2} we have $|\hf[4]|\leq \binom{n-6}{k-6}$. Fix $H_1,H_2,H_3\in \hf$ with $H_1\cap H_2\cap H_3=[6]$. Then the 4-wise 4-intersecting property implies $|F\cap [6]|\geq 4$ for all $F\in \hf$. Let
\[
\hf_i= \{F\in \hf\colon [4]\not\subset F,\ |F\cap [6]|=i\}, \ i=4,5.
\]
For  $B\in \binom{[6]}{4}$ with $B\neq [4]$ and $F\in \hf(B,[6])$, $P(F)$ is a lattice path from $(0,0)$ to $(n-k,k)$ that goes through $(2,4)$ and hits $y=3x+4$. By Proposition  \ref{prop-key} (ii) and (iv)  we infer that
\begin{align*}
|\hf_4| = \sum_{B\in \binom{[6]}{4},\ B\neq [4] }|\hf(B,[6])| <14 \alpha_4^6\binom{n-6}{k-4}\overset{\eqref{ineq-frankl}}
{<}\frac{14}{(2^4-4-1)^{3/2}}\binom{n-6}{k-4}<\binom{n-6}{k-4}.
\end{align*}

Note that $\hf_5= \cup_{1\leq i\leq 4} \hf([6]\setminus \{i\},[6])$. Let $\hg_i=\hf([6]\setminus \{i\},[6])$, $i=1,2,3,4$. By Lemma \ref{lem-6.4}, $\hg_i=\hg_j$ for $1\leq i<j\leq 4$. Since $\hg_1,\hg_2,\hg_3,\hg_4$ are cross 2-intersecting, $\hg_1$ is 4-wise 2-intersecting. Since $n-6>2(k-5)$, by Theorem \ref{thm:main-2}
\[
|\hf_5| = \sum_{1\leq i\leq 4}|\hg_i| \leq 4\binom{n-8}{k-7} = 4\frac{(k-5)(k-6)}{(n-6)(n-7)}\binom{n-6}{k-5}<\binom{n-6}{k-5}.
\]
Thus,
\[
|\hf| \leq |\hf([4])|+|\hf_4|+|\hf_5|< \binom{n-6}{k-6}+\binom{n-6}{k-4}+2\binom{n-6}{k-5}=\binom{n-4}{k-4}.
\]

\vspace{3pt}
{\bf Case C.} $\hf$ is 3-wise 7-intersecting.
\vspace{3pt}

If $\hf$ is 3-wise 8-intersecting, then we may assume $k\geq 3+8= 11$ and by Proposition \ref{prop-key3},
\begin{align*}
|\hf|\leq \left(\frac{\sqrt{5}-1}{2}\right)^8\binom{n}{k} &<\left(\frac{\sqrt{5}-1}{2}\right)^8 \left(\frac{2k-3}{k-3}\right)^4\binom{n-4}{k-4}\\[3pt]
&<\left(\frac{\sqrt{5}-1}{2}\right)^8 \left(\frac{19}{8}\right)^4\binom{n-4}{k-4}<\binom{n-4}{k-4}.
\end{align*}
Thus there exist $F_1,F_2,F_3\in \hf$ such that $F_1\cap F_2\cap F_3=[7]$. Then $|F\cap [7]|\geq 4$ for all $F\in \hf$. Since $\hf([4])$ is 3-wise 3-intersecting, by Proposition \ref{prop-key3},
\[
|\hf([4])| <\left(\frac{\sqrt{5}-1}{2}\right)^3\binom{n-4}{k-4} < 0.24 \binom{n-4}{k-4}.
\]
Let
\[
\hf_i= \{F\in \hf\colon [4]\not\subset F,\ |F\cap [7]|=i\}, \ i=4,5,6
\]
and let $\hg_i=\hf([7]\setminus \{i\},[7])$. Then by Lemma \ref{lem-6.4}, $\hg_1=\hg_2=\hg_3=\hg_4=:\hg$. Moreover, $\hg$ is 4-wise intersecting. Thus,
\[
|\hf_6| = 4|\hg| \leq 4\binom{n-8}{k-7}< 3.4\times\frac{k-6}{n-7}\binom{n-7}{k-6}+0.6\binom{n-7}{k-7}<1.8\binom{n-7}{k-6}+0.6\binom{n-7}{k-7}.
\]
Note that $P(F)$ goes though $(7-i,i)$ and hits $y=3x+4$  for each $F\in \hf_i$, $i=4,5$.
Using Proposition \ref{prop-key} (ii) and (iv), we  have
\[
|\hf_5| = 18\alpha_4^5 \binom{n-7}{k-5}< 18\times\frac{1}{2^4-4-1} \alpha_4 \binom{n-7}{k-5}=2\alpha_4 \binom{n-7}{k-5}<1.8\binom{n-7}{k-5}
\]
and
\[
|\hf_4| \leq 34 \times \alpha_4^9 \binom{n-7}{k-4} <\frac{34}{(2^4-4-1)^2} \times \left(\frac{1}{2}+\frac{1}{2^4}\right)\binom{n-7}{k-4}=\frac{17}{72}\binom{n-7}{k-4}<0.6\binom{n-7}{k-4}.
\]
Thus,
\begin{align*}
|\hf| &=|\hf([4])|  +|\hf_4|+|\hf_5|+|\hf_6| \\[3pt]
&<0.24\binom{n-4}{k-4}+0.6\left(\binom{n-7}{k-4}+3\binom{n-7}{k-5}+3\binom{n-7}{k-6}+\binom{n-7}{k-7}\right)\\[3pt]
&=0.84\binom{n-4}{k-4}<\binom{n-4}{k-4}.
\end{align*}
\end{proof}

\begin{proof}[Proof of Proposition \ref{prop-main1}]
Let $(r,t)\in \{(4,3),(4,4)\}$. By Lemmas \ref{lem-6.5} and \ref{lem-6.6}, we infer $m(2k,k,r,t)=\binom{n-t}{k-t}$.
For $n\geq 2k$, if $n$ is even then by $m(n,n/2,r,t)=\binom{n-t}{n/2-t}$ and  Lemma \ref{lem-6.3}  we have
$m(n,k,r,t)=\binom{n-t}{k-t}$.
If $n$ is odd, then $n\geq 2k$ implies $n-1\geq 2k$.
Using \eqref{ineq-2.1} we conclude that
\begin{align*}
m(n,k,r,t) \leq m(n-1,k,r,t) +m(n-1,k-1,r,t)  =\binom{n-t}{k-t}.
\end{align*}
\end{proof}

\begin{lem}\label{lem-6.3}
 If $k\geq  \frac{t(t-1)}{4\log 2} +t-1$ and  $t\leq 2^{r-2}\log 2 -2$,then
\begin{align}\label{ineq-5.3}
m(2k,k,r,t) =\binom{n-t}{k-t}.
\end{align}
Moreover, in case of equality $\hf$ is the full $t$-star.
\end{lem}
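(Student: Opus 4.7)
The plan is to assume $\hf\subset\binom{[2k]}{k}$ is a shifted, saturated, $r$-wise $t$-intersecting family with $|\hf|=m(2k,k,r,t)$, and to show that if $\hf$ is not the full $t$-star then $|\hf|<\binom{2k-t}{k-t}$; combined with the trivial fact that the full $t$-star realizes $\binom{2k-t}{k-t}$, this both gives \eqref{ineq-5.3} and identifies the extremal family.

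Since $\hf$ is not a $t$-star, Fact~\ref{fact-3.1} upgrades the intersection property: $\hf$ is $(r-1)$-wise $(t+1)$-intersecting. Applying Proposition~\ref{prop-key3} with parameters $(r-1,t+1)$ (which is valid because $n=2k\ge 2k$) gives
\[
|\hf|\le \alpha_{r-1}^{t+1}\binom{2k}{k},
\]
so it remains to prove
\[
\alpha_{r-1}^{t+1}\;<\;\binom{2k-t}{k-t}\Big/\binom{2k}{k}\;=\;\frac{1}{2^{t}}\prod_{i=0}^{t-1}\Big(1-\frac{i}{2k-i}\Big).
\]
On the left, the plan is to use the bound \eqref{ineq-frankl}, namely $\alpha_{r-1}^{r-1}\le 1/(2^{r-1}-r)$, together with the defining relation $\alpha_{r-1}=\tfrac{1}{2}+\tfrac{1}{2}\alpha_{r-1}^{r-1}$, to obtain $2\alpha_{r-1}\le 1+1/(2^{r-1}-r)$, and hence
\[
(2\alpha_{r-1})^{t+1}\le\exp\!\Big(\frac{t+1}{2^{r-1}-r}\Big).
\]
The hypothesis $t\le 2^{r-2}\log 2-2$ is exactly what is needed to push this exponential close to $2$. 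On the right, I plan to write $1-i/(2k-i)=\exp(-i/(2k-i)+O((i/(2k-i))^2))$ and use $\log(1-x)\ge -x-x^2$; the hypothesis $k\ge t(t-1)/(4\log 2)+t-1$ is calibrated so that $\sum_{i=0}^{t-1}i/(2k-i)\le\log 2$, which converts the $2^{-t}$ factor into a strict $2^{-(t+1)}$-type lower bound for the product. Matching the two sides yields the desired strict inequality.

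For the uniqueness claim, note that the argument above shows strict inequality $|\hf|<\binom{2k-t}{k-t}$ whenever $\hf$ is not a full $t$-star; therefore any family attaining $\binom{2k-t}{k-t}$ must be a full $t$-star. The main obstacle I anticipate is the delicate numerical balancing in the last step: the bounds on $\alpha_{r-1}^{t+1}$ and on the binomial ratio are both close to $2^{-(t+1)}$, and the strict inequality has to be extracted from tight control of error terms (the $x^2$ part of the Taylor expansion of $\log(1-x)$ on the binomial side, and the gap between $\alpha_{r-1}$ and $1/2$ on the other). If the direct estimate is too loose for intermediate $k$, a plausible refinement is to replace $\alpha_{r-1}^{t+1}$ by the finer lattice path count $g(2k,k,r-1,t+1)$ from Proposition~\ref{prop-key}, whose leading $i=0$ term is exactly $\binom{2k-t-1}{k-t-1}$ and whose tail decays geometrically in $i$ at rate roughly $2^{-(r-2)}$, which the hypothesis on $t$ is designed to dominate.
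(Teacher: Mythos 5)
Your reduction of the whole lemma to the single inequality $\alpha_{r-1}^{t+1}<\binom{2k-t}{k-t}/\binom{2k}{k}$ has a genuine gap: that inequality is false in the extremal corner of the hypothesis region, so the one-shot bound $|\hf|\leq \alpha_{r-1}^{t+1}\binom{2k}{k}$ from Fact~\ref{fact-3.1} plus Proposition~\ref{prop-key3} cannot finish the proof. Rewrite the two sides as $(2\alpha_{r-1})^{t+1}$ versus $2\prod_{i=1}^{t-1}\bigl(1-\tfrac{i}{2k-i}\bigr)$. The left side is bounded \emph{below} as well as above: by \eqref{ineq-frankl}, $\alpha_{r-1}^{r-1}>\tfrac{1}{2^{r-1}-r+1}$, so $(2\alpha_{r-1})^{t+1}=(1+\alpha_{r-1}^{r-1})^{t+1}$ is roughly $\exp\bigl(\tfrac{t+1}{2^{r-1}}\bigr)$, which for $t$ near its permitted maximum $2^{r-2}\log 2-2$ is about $e^{(\log 2)/2}=\sqrt 2\approx 1.414$. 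On the right, when $k$ is near its permitted minimum $\tfrac{t(t-1)}{4\log 2}+t-1$, the sum $\sum_{i=1}^{t-1}\tfrac{i}{2k-i}$ approaches $\log 2$, so the product drops to about $e^{-\log 2}=\tfrac12$ and the right side drops to about $1$. The inequality you need then reads $1.414<1$. In short, the exponent $t+1$ is exactly one power of $\alpha_{r-1}$ short; your own Case-1-style computation closes only with exponent $t+2$, because the extra factor $\alpha_{r-1}<\tfrac12+\tfrac1{2^{r-1}}$ supplies the missing factor of essentially $\tfrac12$. Your proposed fallback to the finer count $g(2k,k,r-1,t+1)$ is not obviously sufficient either: already its leading term $\binom{2k-t-1}{k-t-1}$ is about half of the target, and by Proposition~\ref{prop-key} the full count increases toward $\alpha_{r-1}^{t+1}\binom{2k}{k}$, so you would still have to beat the same constant.

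This is why the paper splits into two cases rather than applying the probabilistic bound once. If $\hf$ is $(r-1)$-wise $(t+2)$-intersecting, Proposition~\ref{prop-key3} with exponent $t+2$ gives exactly your computation with the extra $\tfrac12$, and the hypotheses on $t$ and $k$ are calibrated so that $\tfrac14\cdot e^{\log 2}\cdot e^{\log 2}=1$. Otherwise some $r-1$ members of $\hf$ intersect in exactly $t+1$ elements (we may take this intersection to be $[t+1]$ by shiftedness), whence every $F\in\hf$ satisfies $|F\cap[t+1]|\geq t$; the family then decomposes as $\hf([t])$ together with $t$ copies of a single family $\hg$ --- equality of the pieces $\hg_i=\hf([t+1]\setminus\{i\},[t+1])$ being forced by saturation via Lemma~\ref{lem-6.4} --- and each half is bounded by $\tfrac12\binom{n-t}{k-t}$ using \eqref{ineq-1.6} for $\hf([t])$ and the multiply-intersecting property of $\hg$ for the rest. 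That structural second case, which your proposal does not anticipate, is the essential content of the paper's argument.
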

\begin{proof}
Let $n=2k$ and let $\hf\subset \binom{[n]}{k}$ be a shifted and saturated $r$-wise $t$-intersecting family. If there exist $F_1,F_2,\ldots,F_{r-1}\in \hf$ with $|F_1\cap F_2\cap \ldots \cap F_{r-1}|=t$, then $\hf$ is a $t$-star and \eqref{ineq-5.3} follows. Thus we may assume that $\hf$ is and $(r-1)$-wise $(t+1)$-intersecting.  We distinguish two cases.

\vspace{3pt}
{\noindent\bf Case 1.} $\hf$ is $(r-1)$-wise $(t+2)$-intersecting.
\vspace{3pt}

Then by  \eqref{ineq-key3} we have
\[
 |\hf|\leq \alpha_{r-1}^{t+2}\binom{n}{k}<\left(\frac{1}{2}+\frac{1}{2^{r-1}}\right)^{t+2}\binom{n}{k}\leq \left(\frac{1}{2}+\frac{1}{2^{r-1}}\right)^{t+2}\frac{n(n-1)\ldots(n-t+1)}{k(k-1)\ldots(k-t+1)}\binom{n-t}{k-t}.
\]
Since $n=2k$, by $t\leq 2^{r-2}\log 2 -2$ and $k\geq  \frac{t(t-1)}{4\log 2} +t-1$ we infer that
\begin{align*}
\frac{1}{2^{t+1}}\left(1+\frac{1}{2^{r-2}}\right)^{t+2}\frac{(2k-1)\ldots(2k-t+1)}{(k-1)\ldots(k-t+1)}
\leq \  & \frac{1}{4} e^{\frac{t+2}{2^{r-2}}}\prod_{1\leq i\leq t-1}\left(1+\frac{i}{2(k-i)}\right)\\[2pt]
<\   &\frac{1}{4} \exp\left(\frac{t+2}{2^{r-2}}+\sum_{1\leq i\leq t-1} \frac{i}{2(k-t+1)}\right)\\[2pt]
\leq\   &\frac{1}{4} \exp\left(\log 2+ \frac{t(t-1)}{4(k-t+1)}\right)\\[2pt]
\leq\   &\frac{1}{4} \exp\left(\log 2+ \log 2\right)= 1.
\end{align*}
Thus $|\hf| < \binom{n-t}{k-t}$.

\vspace{3pt}
{\noindent\bf Case 2.} There exist $F_1,\ldots,F_{r-1}\in \hf$ with $|F_1\cap \ldots \cap F_{r-1}|=t+1$.
\vspace{3pt}

By shiftedness, we may assume $F_1\cap \ldots \cap F_{r-1}=[t+1]$.
Then the $r$-wise $t$-intersecting property implies $|F\cap [t+1]|\geq t$ for all $F\in \hf$.  Let $\hg_i= \hf([t+1]\setminus\{i\},[t+1])$ for $i=1,2,\ldots,t+1$. By Lemma \ref{lem-6.4} we infer $\hg_1=\hg_2=\ldots=\hg_t$.
Let $\hg=\hg_i$, $i=1,2,\ldots,t$. Then
\[
|\hf| =|\hf([t])|+ t|\hg|.
\]
 By Fact \ref{fact-3.1},
  $\hf([t])$ is $(r-1)$-wise intersecting. Using \eqref{ineq-1.6}, we obtain that
  \[
  |\hf([t])| \leq \binom{n-t-1}{k-t-1} <\frac{k-t}{2k-t}\binom{n-t}{k-t}<\frac{1}{2}\binom{n-t}{k-t}.
  \]
  We are left to show $t|\hg|\leq \frac{1}{2}\binom{n-t}{k-t}$.

If $t\geq r$, then $\hg_1,\hg_2,\ldots,\hg_r$ is cross $(r-1)$-intersecting on $[t+2,n]$. Since $\hg_1=\hg_2=\ldots=\hg_r$ , $\hg$ is $r$-wise $(r-1)$-intersecting. Note that  $t\leq 2^{r-2}\log 2 -2\leq \frac{2^r-r-1}{4}$ holds for all $r\geq 3$. By Proposition \ref{prop-key3},
\[
|\hg| \leq \alpha_r^{r-1}\binom{n-t-1}{k-t} \overset{\eqref{ineq-frankl}}{<}\frac{1}{\alpha_r(2^r-r-1)}\binom{n-t-1}{k-t} <\frac{1}{2t} \binom{n-t}{k-t}
\]
and we are done.

By Fact \ref{fact-3.1},  $\hf$ is $t$-wise $r$-intersecting. 
If $r>t$ then $\hg_1,\hg_2,\ldots,\hg_t$ is cross $(r-1)$-intersecting on $[t+2,n]$. Since $\hg_1=\hg_2=\ldots=\hg_t$,    $\hg$ is $t$-wise $(r-1)$-intersecting. By Theorem \ref{thm:main-2} we may assume $t\geq 3$. Then
\[
|\hg| \leq \alpha_t^{r-1}\binom{n-t-1}{k-t} \leq \alpha_t^{t}\binom{n-t-1}{k-t} \overset{\eqref{ineq-frankl}}{<}\frac{1}{2^t-t-1}\binom{n-t-1}{k-t}\leq  \frac{1}{2t} \binom{n-t}{k-t}.
\]
Thus $t|\hg|\leq \frac{1}{2}\binom{n-t}{k-t}$ and the lemma is proven.
\end{proof}

\begin{proof}[Proof of Theorem \ref{thm:main-3}]
Note that $n\geq \frac{t(t-1)}{2\log 2} +2t-1$ implies
\begin{align}\label{ineq-6.1}
\frac{n}{2}>\frac{n-1}{2}\geq \frac{t(t-1)}{4\log 2} +t-1.
\end{align}
If $n$ is even, then by applying Lemma \ref{lem-6.3} we infer that
\[
m\left(n,\frac{n}{2},r,t\right) =\binom{n-t}{\frac{n}{2}-t}.
\]
Since $\frac{n}{2}\geq k$, by Lemma \ref{lem-6.2} we have
\begin{align*}
m\left(n,k,r,t\right) =\binom{n-t}{k-t}.
\end{align*}

If $n$ is odd, then $n\geq 2k$ implies $n\geq 2k+1$. By \eqref{ineq-6.1} and applying Lemma \ref{lem-6.3},
\[
m\left(n-1,\frac{n-1}{2},r,t\right) =\binom{n-1-t}{\frac{n-1}{2}-t}.
\]
Since $\frac{n-1}{2}\geq k>k-1$, by Lemma \ref{lem-6.2}
\[
m(n-1,k,r,t) = \binom{n-1-t}{k-t}\mbox{ and }m(n-1,k-1,r,t) = \binom{n-1-t}{k-1-t}.
\]
Using \eqref{ineq-2.1} we conclude that
\begin{align*}
m(n,k,r,t) \leq m(n-1,k,r,t) +m(n-1,k-1,r,t)  =\binom{n-t}{k-t}.
\end{align*}
\end{proof}

\section{Concluding remarks}

The area of research concerning $r$-wise $t$-intersecting non-uniform families is quite large and there are several results we could not even mention. The case of uniform families, that is, adding a new parameter $k$, increases this variety. In the present paper we stayed mostly in the range $k\leq \frac{1}{2}n$. However, it is completely legitimate to consider the range $k\sim cn$ for any fixed $c<1$ as long as $c\leq \frac{r-1}{r}$.

If one wants to extend the results to such a range it seems to be essential to answer the following question.

\begin{prob}
Let $c<\frac{r-1}{r}$ and denote by $p(n,k,r,t)$ the probability that a random lattice path from $(0,0)$ to $(n-k,k)$ hits the line $y=(r-1)x+t$. Let $\alpha$ be the unique root of $c-x+(1-c)x^r=0$ in $(0,1)$. Does the inequality
\begin{align}
p(n,k,r,t) <\alpha^t \mbox{ holds always if }k\leq cn?
\end{align}
\end{prob}

It seems to be rather difficult to determine the exact value of $n_0(k,r,t)$. Based on Fact \ref{fact-4.7}, let us make the following:

\begin{conj}
For $n\geq  \frac{\sqrt{4t+9}-1}{2}k$,
\begin{align*}
m(n,k,3,t) =\binom{n-t}{k-t}.
\end{align*}
\end{conj}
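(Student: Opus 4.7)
The plan is to extend the partition argument used in the proof of Theorem~\ref{thm:main-2} from $t=2$ to arbitrary $t$. After shifting and saturating, assume $\hf$ is shifted, saturated, not a $t$-star (otherwise we are done), and---by Fact~\ref{fact-4.7}---not contained in $\ha_1(n,k,3,t)$. Fact~\ref{fact-3.1} then gives that $\hf$ is $2$-wise $(t+1)$-intersecting.

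Using Corollary~\ref{cor-2.2}, partition $\hf=\hf_0\cup\hf_1\cup\hf_2\cup\cdots$ where $\hf_0=\{F:[t]\subset F\}$ is the star piece and $\hf_i$ ($i\ge 1$) consists of those $F$ for which the path $P(F)$ first hits $y=2x+t$ at $x=i$. The star piece is bounded using the $2$-wise $(t+1)$-intersecting property: $\hf_0([t])$ is intersecting, so $|\hf_0|\le\binom{n-t-1}{k-t-1}$. The piece $\hf_1$ is further partitioned by $F\cap[t+3]$; its dominant sub-families are $\hh_i=\{F\setminus[t+3]:F\cap[t+3]=[t+3]\setminus\{i\}\}$ for $i\in[t]$. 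An analogue of Claim~\ref{claim-5.11} shows that if any two of the $\hh_i$ fail to be cross-intersecting then $\hf\subset\ha_1(n,k,3,t)$---contradicting our reduction---so they are pairwise cross-intersecting; Lemma~\ref{lem-6.4} then collapses them to a single intersecting family $\hh$, giving $\sum_i|\hh_i|=t|\hh|\le t\binom{n-t-4}{k-t-3}$ by EKR. The remaining sub-pieces of $\hf_1$ (those with $|F\cap[t+3]|\in\{t,t+1\}$) are controlled by further subdividing according to $F\cap[t+3]$ and combining cross-intersecting arguments with the lattice-path counts. For $i\ge 2$, Proposition~\ref{prop-key} (or the refined shadow inequality of Corollary~\ref{cor-4.1}, suitably extended to general $t$) bounds $|\hf_i|$ by a constant multiple of $\binom{t}{i}\binom{n-t}{k-t-2i}$, so the tail $\sum_{i\ge 2}|\hf_i|$ is dominated by the leading $\hf_1$ term.

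Summing the contributions one should obtain a bound of the form
\begin{align*}
|\hf|\le\binom{n-t-1}{k-t-1}+c\,t\binom{n-t-4}{k-t-3}+\sum_{j\ge 2}\binom{t}{j}\binom{n-t}{k-t-2j}+(\text{smaller terms}),
\end{align*}
which is a refinement of \eqref{ineq-key2}. The quadratic identity $x^2+x=t+2$ defining the threshold $x=\frac{\sqrt{4t+9}-1}{2}$ is precisely what makes this sum equal $\binom{n-t}{k-t}=\binom{n-t-1}{k-t-1}+\binom{n-t-1}{k-t}$ at $n=xk$, as the computation in Fact~\ref{fact-4.7} already shows. The matching is not coincidental: the dominant non-star contribution comes from the non-star part of $\ha_1(n,k,3,t)$, which is exactly the family whose size threshold defines $\frac{\sqrt{4t+9}-1}{2}$.

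\textbf{The main obstacle} will be keeping the constants on the near-star pieces sharp \emph{uniformly in $t$}: the $t=2$ proof enjoyed an explicit enumeration of the $O(1)$ patterns of $F\cap[5]$, whereas for general $t$ there are $\Theta(t^2)$ patterns in $\binom{[t+3]}{\ge t}$ not containing $[t]$, and their contributions could accumulate if treated independently. I expect the right tool is to iterate Lemma~\ref{lem-6.4} upward from $[t+1]$ to $[t+2]$ and $[t+3]$, collapsing each symmetry class of patterns (those missing the same subset of $[t]$) to a single reduced family, and then to invoke Theorem~\ref{thm-F91} (or induction on $t$) to bound the reduced family. This would cut the number of distinct summation terms from $\Theta(t^2)$ to $O(t)$ and make the final arithmetic tractable.
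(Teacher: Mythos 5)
This statement is an open conjecture: the paper offers no proof of it (it appears in the concluding remarks, prefaced by the remark that determining $n_0(k,r,t)$ exactly ``seems to be rather difficult''), so there is nothing to compare your argument against, and your text is in any case a plan rather than a proof --- you yourself flag the main combinatorial obstacle as unresolved. Beyond that self-acknowledged gap, the quantitative backbone of the plan cannot reach the conjectured threshold. The proof of Theorem \ref{thm:main-2} that you propose to extend only works for $n>2k$, and it survives there because every piece is bounded with a lossy constant (e.g.\ $7\alpha_3^3<1.66$ for $\hf_3$, the factor $\tfrac{5}{4}\alpha_3^3$ for $\hf_5$, the crude $\tfrac{1}{16}\binom{n-5}{k-2}$ for $\hf_6$) and at $n=2k$ there is comfortable slack. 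At the conjectured threshold $n=\frac{\sqrt{4t+9}-1}{2}\,k$ there is essentially no slack at all: Fact \ref{fact-4.7} shows that $|\ha_1(n,k,3,t)|/\binom{n-t}{k-t}$ tends to $1$ there, so the non-star contribution must be matched essentially exactly, and any constant-factor loss of the kind your sketch relies on (including the ``constant multiple of $\binom{t}{i}\binom{n-t}{k-t-2i}$'' bounds for the tail) destroys the inequality. This is already fatal for $t=2$, where the conjectured range $n\geq\frac{\sqrt{17}-1}{2}k\approx 1.56k$ strictly contains the range $n>2k$ covered by Theorem \ref{thm:main-2}.

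A second structural problem: for $t=2,3$ the conjectured range includes $n<2k$, where the tools you invoke are unavailable. Proposition \ref{prop-key} (iv) and Proposition \ref{prop-key3} give $g(n,k,r,t)\leq\alpha_r^t$ only via the chain $g(n,k,r,t)\leq g(2k,k,r,t)\leq f(r,t,\tfrac12)$, which requires $n\geq 2k$; whether the analogous bound $p(n,k,r,t)<\alpha^t$ holds for $k\leq cn$ with $c>\tfrac12$ is exactly the Problem the authors pose in the concluding remarks as a prerequisite for going below $n=2k$. Likewise the Hilton inequality \eqref{ineq-1.7} used for the cross-intersecting pair $\ha_1,\ha_2$ in Claim \ref{claim-5.11} needs the ground set to be at least twice the uniformity. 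So even granting a resolution of the $\Theta(t^2)$-patterns issue you identify, the proposal would at best reproduce a bound of the quality of Theorem \ref{thm:main-1} for $r=3$ (a threshold of order $\sqrt{t}\,(k-t)$ with a suboptimal constant), not the sharp constant $\frac{\sqrt{4t+9}-1}{2}$ that the conjecture asserts.
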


Another  important problem would be to determine $m^*(n,k,r,1)$, the uniform version of the Brace-Daykin Theorem (the case $t=1$ of Theorem \ref{thm-bd}). In the case $r=2$ the solution is given by the Hilton-Milner Theorem \cite{HM}.

Let us recall the Hilton-Milner-Frankl Theorem. Define
\begin{align*}
&\hb(n,k,r,t) =\left\{B\in \binom{[n]}{k}\colon [t+r-2]\subset B,\  B\cap [t+r-1,k+1]\neq \emptyset\right\}\\[2pt]
&\qquad\qquad\qquad\qquad\cup \left\{[k+1]\setminus \{j\}\colon 1\leq j\leq t+r-2\right\}.
\end{align*}

\begin{thm}[Hilton-Milner-Frankl Theorem \cite{HM,F78-2,AK0}]
For $n\geq (k-t+1)(t+1)$,
\begin{align}\label{ineq-hmfrankl}
m^*(n,k,2,t)=\max\left\{|\ha_1(n,k,2,t)|,|\hb(n,k,2,t)|\right\}.
\end{align}
\end{thm}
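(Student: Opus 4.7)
The plan is to follow the shifting-and-structure framework that underlies the Hilton-Milner theorem and its Frankl and Ahlswede-Khachatrian extensions. First, I would reduce to a shifted family: shifting preserves cardinality and the $2$-wise $t$-intersecting property but can destroy non-triviality, so the shifts $S_{ij}$ are applied one at a time while preserving non-triviality. If at some stage every admissible shift would destroy non-triviality, then $\hf$ already has a very constrained structure---all but a handful of its members share a common element---and can be shown to lie inside $\hb(n,k,2,t)$ up to isomorphism, with the bound following by direct counting. Thus we may assume $\hf$ is shifted.

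Assuming $\hf$ is shifted and non-trivial, for each $i\in[t]$ there exists $F\in\hf$ with $i\notin F$, and shiftedness forces $[k+1]\setminus\{i\}\in\hf$. Intersecting an arbitrary $F\in\hf$ with these witnesses yields $|F\cap[k+1]|\geq t$, with strict inequality whenever $F\cap[t]\neq\emptyset$. I would then invoke Corollary~\ref{cor-2.2}, producing for each $F$ a minimum index $i(F)\geq 0$ with $|F\cap[t+2i(F)]|\geq t+i(F)$; the stratum $i(F)=0$ is the star portion while $i(F)\geq 1$ captures the deviation from $\ha_0$.

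I would then split into two cases. In Case~(A), there exists $F\in\hf$ with $F\cap[t]\subsetneq[t]$ and $|F\cap[t+2]|\geq t+1$, i.e., a deviation beyond the canonical sets $[k+1]\setminus\{j\}$ for $j\in[t]$. Iterated shifting and repeated use of the $t$-intersection property propagate this deviation globally; combined with shadow bounds from Kruskal-Katona (Theorem~\ref{thm-kk}), one forces $\hf\subseteq\ha_1(n,k,2,t)$, giving $|\hf|\leq|\ha_1|$. In Case~(B), the only non-star members of $\hf$ are of the form $[k+1]\setminus\{j\}$ with $j\in[t]$, while every star member $F$ (with $[t]\subseteq F$) satisfies $F\cap[t+1,k+1]\neq\emptyset$ (forced by intersection with the witness sets), so $\hf\subseteq\hb(n,k,2,t)$ and $|\hf|\leq|\hb|$.

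The main obstacle is Case~(A): one must show that even the slightest non-canonical deviation already forces $\hf$ inside $\ha_1$, rather than into some intermediate $\ha_i$ with $i\geq 2$. The threshold $n\geq(k-t+1)(t+1)$ is used precisely here, as it is exactly the regime where $|\ha_1|\geq|\ha_i|$ for all $i\geq 2$; the comparison reduces to a weighted count of lattice paths in the spirit of Section~2. The \emph{moreover} statement, characterising the extremal families as $\ha_1$ and $\hb$ up to isomorphism, is obtained by tracking equality through each step of the argument.
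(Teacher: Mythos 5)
This theorem is not proved in the paper: it is recalled in the concluding remarks as a known result, with the proof residing in the cited works (Frankl 1978 for a partial range, completed by Ahlswede--Khachatrian 1996). So your proposal can only be judged on its own merits, and it has a genuine gap at its core.

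The fatal step is Case~(A). You claim that the presence of a single member $F$ with $F\cap[t]\subsetneq[t]$ and $|F\cap[t+2]|\geq t+1$ "propagates globally" and forces $\hf\subseteq\ha_1(n,k,2,t)$. This is false. The family $\ha_2(n,k,2,t)=\{A\in\binom{[n]}{k}\colon |A\cap[t+4]|\geq t+2\}$ is shifted, $2$-wise $t$-intersecting, non-trivial, and contains members triggering your Case~(A) condition (e.g.\ a set meeting $[t+2]$ in exactly $[t+2]\setminus\{1\}$); yet it also contains sets such as $([t+4]\setminus\{1,2\})\cup\{t+5,\dots,k+2\}$, which meet $[t+2]$ in only $t$ elements, do not contain $[t]$, and are not of the form $[k+1]\setminus\{j\}$. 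Hence $\ha_2\not\subseteq\ha_1$ and $\ha_2\not\subseteq\hb$, so your dichotomy "either $\hf\subseteq\ha_1$ or $\hf\subseteq\hb$" cannot hold. The theorem is a statement about cardinalities, not containment: one must prove $|\hf|\leq\max\{|\ha_1|,|\hb|\}$ for every shifted non-trivial $t$-intersecting family, including $\ha_2,\ha_3,\dots$ and families interpolating between them. That comparison is precisely the hard content of the Hilton--Milner--Frankl theorem; in the Ahlswede--Khachatrian treatment it requires the machinery of generating sets and a pushing--pulling argument, and in Frankl's earlier partial proof a delicate weighted counting well beyond Kruskal--Katona applied once. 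Your sketch does not engage with this step, and the appeal to the threshold $n\geq(k-t+1)(t+1)$ as "exactly the regime where $|\ha_1|\geq|\ha_i|$" presupposes the very comparison that needs proving (and, in any case, comparing the extremal candidates among themselves does not bound an arbitrary family). The reduction to shifted families preserving non-triviality is also stated too casually --- the fallback claim that a shift-obstructed family lies in $\hb$ "up to isomorphism" needs an argument --- but that part is at least standard; the Case~(A) collapse is not repairable as written.
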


Note that both families $\ha_1(n,k,2,t)$ and $\hb(n,k,2,t)$ are $r$-wise $(t+2-r)$-intersecting, in particular, $(t+1)$-wise 1-intersecting. Thus in the range $(k-t+1)(t+1)<n$, i.e.,
$k<\frac{n}{t+1}+t-1$,
\[
m^*(n,k,r,t+2-r) =m^*(n,k,2,t).
\]
However the case $k\sim cn$ with $\frac{1}{t+1}<c<\frac{r-1}{r}$
 appears to be much harder. In \cite{FW} the following was proved.

\begin{thm}[\cite{FW}]
Let $0<\varepsilon<\frac{1}{10}$. For $n\geq \frac{4}{\varepsilon^2}+7$ and  $\left(\frac{1}{2}+\varepsilon \right)n\leq k\leq \frac{3n}{5}-3$,
\[
m^*(n,k,3,1) = |\ha_1(n,k,3,1)|.
\]
\end{thm}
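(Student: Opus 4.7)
The plan is to prove $m^*(n,k,3,1)=|\ha_1(n,k,3,1)|$, with $\ha_1(n,k,3,1)$ the unique extremal family, by combining the structural consequences of non-triviality with a careful enumeration in the specified range.

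First I would apply shifting to $\hf$, taking care to preserve $\bigcap_{F\in\hf}F=\emptyset$. A robust device is to perform shifts $S_{ij}$ greedily as long as the resulting family is not a $1$-star; if the procedure cannot be completed, the obstruction yields an ``almost-universal'' element and $\hf$ is bounded directly by splitting according to that element. In the main case one may assume $\hf$ is shifted and non-trivial. By Fact \ref{fact-3.1} applied with $(r,s,t)=(3,2,1)$, non-triviality forces $\hf$ to be $2$-wise $2$-intersecting. Non-triviality plus shiftedness also yields some $F^*\in\hf$ with $1\notin F^*$; by Corollary \ref{cor-2.2} such $F^*$ satisfies $|F^*\cap[4]|\geq 3$, hence $\{2,3,4\}\subseteq F^*$, and shiftedness then produces the canonical set $[2,k+1]\in\hf$.

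Second I would partition $\hf=\hf^+\cup\hf^-$, where $\hf^+=\hf\cap\ha_1(n,k,3,1)=\{F\in\hf:|F\cap[4]|\geq 3\}$, reducing the problem to $|\hf^-|\leq|\ha_1(n,k,3,1)|-|\hf^+|$. For each $F\in\hf^-$ one has $|F\cap[4]|\leq 2$, but Corollary \ref{cor-2.2} still forces the lattice path $P(F)$ to hit $y=2x+1$; the first hit must occur at some $x\geq 2$. The case $x=2$ gives $|F\cap[7]|\geq 5$ together with $|F\cap[4]|\leq 2$, so $\{5,6,7\}\subseteq F$ and $|F\cap[4]|=2$; higher-$x$ cases analogously pin $F$ to a structured initial segment $[1,3i+1]$. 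Thus $\hf^-$ is confined to a small union of explicit binomial ``tail'' subfamilies indexed by the first-hit height $i\geq 2$.

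Third, the $3$-wise $1$-intersecting property combined with the concrete sets in $\hf^+$ supplies additional constraints tying $|\hf^-|$ to $|\ha_1(n,k,3,1)|-|\hf^+|$. For instance, when $\hf^+$ is dense enough to contain $[k+1]\setminus\{1\}$ and $[k+1]\setminus\{2\}$ (both lie in $\ha_1$ and are produced by shiftedness), every $F\in\hf^-$ must satisfy $F\cap[3,k+1]\neq\emptyset$, which further narrows $F$. Using these constraints and the principal tail bound $|\{F\in\hf^-:x(F)=2\}|\leq 6\binom{n-7}{k-5}$ with geometric decay for higher $x$ (via the Sperner-type monotonicity of Proposition \ref{prop-key}(i)-(ii) applied level-by-level), the required inequality reduces to
\[
6\binom{n-7}{k-5}+(\text{higher-order tail terms})\leq\binom{n-4}{k-3},
\]
which closes precisely when $k\leq\tfrac{3n}{5}-3$. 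The lower hypothesis $k\geq(\tfrac12+\varepsilon)n$ together with $n\geq 4/\varepsilon^2+7$ provides enough slack through the Chernoff bound \eqref{chernoff-small} to absorb the case where shifting threatens non-triviality. The main obstacles are (i) the shifting step, i.e.\ preserving non-triviality or handling its loss, and (ii) the equality characterization, which follows by tracking uniqueness through each estimate and in the Complete Intersection Theorem applied to the $2$-wise $2$-intersecting companion of $\hf$.
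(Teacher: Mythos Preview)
The paper does \emph{not} prove this theorem; it is quoted in the concluding remarks as a result of \cite{FW} and carries no argument here. So there is no ``paper's own proof'' to compare your proposal against.

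As for the proposal itself, the opening moves are sound: shifting while guarding non-triviality, the deduction via Fact~\ref{fact-3.1} that $\hf$ is $2$-wise $2$-intersecting, and the lattice-path stratification by first-hit height are all appropriate for this problem. But the central step is not actually carried out. You reduce to $|\hf^-|\le |\ha_1(n,k,3,1)|-|\hf^+|$ and then assert that ``the required inequality reduces to'' a fixed binomial comparison with right-hand side $\binom{n-4}{k-3}$. Those two things are not the same: the deficit $|\ha_1|-|\hf^+|$ depends on $\hf$, and you have given no mechanism (an injection, a cross-intersection bound, a Kruskal--Katona-type transfer) that converts members of $\hf^-$ into distinct elements of $\ha_1\setminus\hf^+$. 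Without that link the argument bounds $|\hf^-|$ by an absolute quantity, not by the available deficit, and the two sides need not meet.

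Two further loose ends. First, the tools of Proposition~\ref{prop-key} you invoke are tailored to $n\ge 2k$; here $k>n/2$, and the monotonicity statements run in the opposite direction to what you need, so ``geometric decay for higher $x$'' requires a separate argument in this regime. Second, the role you assign to the lower hypothesis $k\ge(\tfrac12+\varepsilon)n$ with $n\ge 4/\varepsilon^2+7$ and the Chernoff bound is vague: Chernoff in the paper is used to compare uniform and $p$-random walks, not to ``absorb'' failures of shifting, and it is unclear what random variable you are applying it to. In \cite{FW} the lower bound on $k$ enters through quantitative control of the $2$-wise $2$-intersecting structure of $\hf$ (and of the subfamilies $\hf(P,[4])$), not through a concentration estimate; your sketch does not capture that.
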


\end{document}